\documentclass[10pt,a4paper]{article}
\usepackage{color}
\usepackage{amsmath}
\usepackage{amssymb}
\usepackage{amsthm}
\usepackage{graphicx}
\usepackage{url}
\usepackage{algorithm}
\usepackage{algorithmic}

\setlength{\topmargin}{-0.5cm}
\numberwithin{equation}{section}

\newcommand{\Z}{\mbox{$\mathbb{Z}$}}
\newcommand{\wt}[1]{\widetilde{#1}}
\newcommand{\dsm}[1]{\mbox{$\displaystyle #1 $}}
\newcommand{\Primes}{{\cal P}}
\newcommand{\qtx}[1]{\quad\text{#1}\quad}
\newcommand{\setof}[2]{\left\{#1\, \dv #2\right\}}
\def\qh{\widehat{q}}
\def\Rh{\widehat{R}}
\def\Fh{\widehat{F}}
\def\Pih{\widehat{\Pi}}
\def\pih{\widehat{\pi}}

\def\ds{\displaystyle}
\def\iy{\infty}

\def\ds{\displaystyle}
\def\iy{\infty}

\def\ds{\displaystyle}
\def\iy{\infty}

\def\al{\alpha}
\def\be{\beta}
\def\ga{\gamma}
\def\de{\delta}
\def\De{\Delta}
\def\la{\lambda}
\def\La{\Lambda}

\def\vep{\varepsilon}
\def\vfi{\varphi}
\def\om{\omega}

\def\si{\sigma}

\def\cd{\mathcal{D}}
\def\ce{\mathcal{E}}

\def\cg{\mathcal{G}}

\def\cn{\mathcal{N}}
\def\co{\mathcal{O}}
\def\cp{\mathcal{P}}
\def\cs{\mathcal{S}}
\def\ct{\mathcal{T}}
\def\cu{\mathcal{U}}

\def\ni{\noindent}

\def\dv{\,|\,}

\def\Li{\textrm{Li}}
\def\Llr{\Longleftrightarrow}

\def\LR{\Longrightarrow}
\def\numero{n$^{\text{o}}$}

\newcommand{\fl}[1]{\left\lfloor {#1}\right\rfloor}

\newcommand{\pd}[2]{\frac{\partial#1}{\partial#2}}

\newcommand{\set}[1]{\left\{#1\right\}}

\newcommand{\ben}{{\rm ben\,}}
\newcommand{\benp}{{\rm ben}_p\,}
\newcommand{\sg}{''}
\newcommand{\pfrac}[2]{\left(\frac{#1}{#2}\right)}

\newtheorem{definition}{Definition}
\newtheorem{prop}{Proposition}

\newtheorem{lem}{Lemma}
\newtheorem{coro}{Corollary}
\newtheorem{principe}{Principle}
\newenvironment{rem}{\noindent {\bf Remark}: }%
{\smallskip\par}

\title{Landau's function for one million billions}
\author{Marc Del\'eglise, Jean-Louis Nicolas and Paul Zimmermann
\footnote{Research partially supported by INRIA and by CNRS.}}

\begin{document}
\maketitle

\hfill
\begin{minipage}[t]{40mm}
\`A Henri Cohen pour son soixanti\`eme 
anniversaire.
\end{minipage}
\bigskip

\begin{abstract}
Let ${\mathfrak S}_n$ denote the symmetric group 
with $n$ letters, and $g(n)$ the maximal order of an element 
of ${\mathfrak S}_n$. If the standard factorization of $M$ into primes 
is $M=q_1^{\al_1}q_2^{\al_2}\ldots q_k^{\al_k}$, we define $\ell(M)$
to be $q_1^{\al_1}+q_2^{\al_2}+\ldots +q_k^{\al_k}$;
one century ago, E. Landau proved that $g(n)=\max_{\ell(M)\le n} M$ and that,
when $n$ goes to infinity, $\log g(n) \sim \sqrt{n\log(n)}$.

There exists a basic algorithm to compute $g(n)$ for $1 \le n 
\le N$; its running time is 
$\co\left(N^{3/2}/\sqrt{\log N}\right)$
and the needed memory is $\co(N)$; 
it allows computing $g(n)$ up to, say, one million. We describe an 
algorithm to calculate $g(n)$ for $n$ up to $10^{15}$. The main idea is to use 
the so-called {\it  $\ell$-superchampion numbers}. Similar numbers, the 
{\it superior highly composite numbers}, were introduced by S. Ramanujan to 
study large values of the divisor function $\tau(n)=\sum_{d\dv n} 1$.
\end{abstract}

\bigskip

\ni
{\bf Key words:} arithmetical function,
symmetric group, maximal order, highly composite number.

\bigskip

\ni
2000 Mathematics Subject Classification: 11Y70, 11N25.

%--------------------------------------------------------------------------
\section{Introduction}
%--------------------------------------------------------------------------
\subsection{Known results about Landau's function}
%--------------------------------------------------------------------------

For $n\ge 1$, let ${\mathfrak S}_n$ denote the symmetric group 
with $n$ letters. The order
of a permutation of ${\mathfrak S}_n$ is the least common 
multiple of the lengths of its cycles.
Let us call $g(n)$ the maximal order of an element of ${\mathfrak S}_n$. 
If the standard factorization of $M$ into primes 
is $M=q_1^{\al_1}q_2^{\al_2}\ldots q_k^{\al_k}$, we define $\ell(M)$
to be 
\begin{equation}\label{l}
\ell(M)=q_1^{\al_1}+q_2^{\al_2}+\ldots +q_k^{\al_k}.
\end{equation}
E. Landau proved in \cite{Lan} that
\begin{equation}\label{g}
g(n)=\max_{\ell(M)\le n} M
\end{equation}
which implies 
\begin{equation}\label{lgn}
\ell(g(n))\le n
\end{equation}
and for all positive integers $n,M$
\begin{equation}\label{pcarg}
\ell(M) \le n \LR  M \le g(n)\quad \iff \quad M > g(n) \LR \ell(M) > n.
\end{equation}
P. Erd\H {o}s and P. Tur\'an proved in \cite{Tur} that 
\begin{equation}\label{ErTu}
M \text{ is the order of some element of } {\mathfrak S}_n \;\Llr\; 
\ell(M)\le n.
\end{equation} 
E. Landau also proved in \cite{Lan} that
\begin{equation}\label{gasy}
\log g(n) \sim \sqrt{n\log n}, \qquad n\to \iy.
\end{equation}
This asymptotic estimate was improved by S. M. Shah \cite{Sha} and M. Szalay
\cite{Sza}; in \cite{MNR1}, it is shown that
\begin{equation}\label{Li}
\log g(n)=\sqrt{\Li^{-1}(n)} +\co(\sqrt n \exp(-a\sqrt{\log n}))
\end{equation}
for some $a>0$; $\Li^{-1}$ denotes the inverse function of the 
integral logarithm.

The survey paper \cite{Mil} of W. Miller is a nice introduction to $g(n)$;
it contains elegant and simple proofs of \eqref{g}, \eqref{ErTu} and 
\eqref{gasy}.

J.-P. Massias proved in \cite{Mas} that for $n \ge 1$
\begin{equation}\label{gMas}
\log g(n) \le
\frac{\log g(1319366)}{\sqrt{1319366\,\log(1319366)}} \sqrt{n\log n}
\approx 1.05313\sqrt{n\log n}.
\end{equation}
In \cite{MNR2} more accurate 
effective results are given, including
\begin{equation}\label{gMasmin}
\log g(n) \ge  \sqrt{n\log n},\qquad n\ge 906
\end{equation}
and
\begin{equation}\label{gMasmaj}
\log g(n) \le  \sqrt{n\log n}\left( 1+\frac{\log\log n-0.975}{2\log n}\right),
\qquad n\ge 4.
\end{equation}
Let $P^+(g(n))$ denote the greatest prime factor of $g(n)$. In \cite{Gra}, 
J. Grantham proved
\begin{equation}\label{Pgn}
P^+(g(n)) \le 1.328 \sqrt{n\log n}, \qquad n\ge 5.
\end{equation}
Some other functions similar to $g(n)$ were studied in \cite{Ger}, 
\cite{Lev}, \cite{N73}, \cite{Sza} and \cite{Vit}. 

%--------------------------------------------------------------------------
\subsection{Computing Landau's function}
%--------------------------------------------------------------------------

A table of Landau's function up to $300$ is given at the end of \cite{N2}. 
It has been computed with the algorithm described and used in \cite{N4}  
to compute $g(n)$ up to $8000$. By using similar algorithms, a table 
up to $32000$ is given in \cite{Mor}, and a table up to $500000$ is 
mentioned in \cite{Gra}. The algorithm given in \cite{N4} will be referred 
in this paper as the basic algorithm. We shall recall it in Section 
\ref{basic}. 
It can be used to compute $g(n)$ for $n$ up to, say, one million, eventually a 
little more. It cannot compute $g(n)$ without calculating simultaneously
$g(n')$ for $1\le n' \le n$.

If we look at a table of $g(n)$ for $31000\le n\le 31999$ (such a table 
can  be easily built by using the Maple procedure given in Section 
\ref{basic}), we observe 
three parts among the prime divisors of $g(n)$. More precisely, let us set
\[ 
g(n)=\prod_p p^{\al_p},\; g^{(1)}(n)=\prod_{p\le 17} p^{\al_p},\;
g^{(2)}(n)=\prod_{19\le p\le 509} p^{\al_p},\;
g^{(3)}(n)=\prod_{p > 509} p^{\al_p};
\]
the middle part $g^{(2)}(n)$ is constant (and equal to 
$\prod_{19\le p\le 509} p$) for all $n$ between $31000$ 
and $31999$, while the first part $g^{(1)}(n)$ takes only 18 values, 
and the third part $g^{(3)}(n)$ takes 92 values.

So, if $n'$ is in the neighbourhood of $n$, $g(n')/g(n)$ is a 
fraction which is the product of a {\it prefix} (made of small primes)
and a {\it suffix} (made of large primes).

The aim of this article is to make precise this remark to get an algorithm 
able to compute $g(n)$ for some fixed $n$ up to $10^{15}$.

%--------------------------------------------------------------------------
\subsection{The new algorithm}\label{stepalgo}
%--------------------------------------------------------------------------
Let $\tau(n)=\sum_{d\dv n}1$ be the divisor function. To study 
{\it highly composite numbers} (that is the $n$'s such 
that $m < n$ implies $\tau(m) < \tau(n)$), S. Ramanujan 
(cf. \cite{Ram,Ram2,N39}) 
has introduced the {\it superior highly composite numbers} which maximize
$\tau(n)/n^\vep$ for some $\vep > 0$. This definition can be 
extended to function $\ell$: $N$ is said to be $\ell$-superchampion if 
it minimizes $\ell(N)-\rho\log(N)$ for some $\rho > 0$. These numbers 
will be discussed in Section~\ref{superch}: they are easy to compute and have 
the property that, if $n=\ell(N)$, then $g(n)=N$.

If $N$ minimizes $\ell(N)-\rho\log(N)$, we call {\it benefit} of an 
integer $M$ the non-negative quantity $\ben(M)=\ell(M)-\ell(N)-
\rho\log(M/N)$. If $n$ is not too far from $\ell(N)$, 
a relatively small bound can be obtained for
$\ben g(n)$, and this allows computing it. This notion of benefit 
will be discussed in Section~\ref{benefit}.
\medskip

To compute $g(n)$, the main steps of our algorithm are
\begin{enumerate}
\item Determine the two consecutive $\ell$-superchampion numbers $N$ and $N'$
such that $\ell(N) \le n < \ell(N')$ and their common parameter $\rho$ 
(cf. Section \ref{compsup}).

\item For a guessed value $B'$, determine a set $\cd(B')$ of plain
prefixes whose benefit is smaller than $B'$ (cf. Section \ref{prefi1} and 
Section \ref{compplain}).

\item Use the set $\cd(B')$ to compute an upper bound $B$ such that
$\ben g(n)\le \ben g(n)+n-\ell(g(n))\le B$ (cf. Section \ref{upbb}); 
note that, from \eqref{lgn}, $\ell(g(n))\le n$ holds.

\item Determine $\cd(B)$, a set containing the plain prefix of $g(n)$.
If $B < B'$, to get $\cd(B)$, we just 
have to remove from $\cd(B')$ the elements whose benefit is bigger than $B$.
If $B > B'$, we start again the algorithm described in Section 
\ref{compplain} to get $\cd(B')$ with a new value of $B'$ greater
than $B$.

\item Compute a set containing the normalized prefix of $g(n)$ 
(cf. Sections \ref{compnorm}, \ref{heart} and \ref{fight}).

\item Determine the suffix of $g(n)$ by using the function $G(p_k,m)$ 
introduced in Section \ref{intrG} and discussed in Sections \ref{grandpe} 
and \ref{Glarge}.
\end{enumerate}
In the sequel of our article, `` step '' will refer to one of the
above six steps, and `` the   algorithm '' will refer to the algorithm
sketched in Section \ref{stepalgo}.

On the web site  of the second author, there is a {\sc Maple} code of this 
algorithm where each instruction is explained according with the 
notation of this article.

If we want to calculate $g(n)$ for consecutive values $n=n_1, n=n_1+1,\ldots,
n=n_2$, most of the operations of the algorithm are similar and can be put in 
common; however, due to some technical questions, it is more difficult to
treat this problem, and here, we shall restrict ourselves to the computation of
$g(n)$ for one value of $n$.

To compute the first 5000 highly composite numbers, G. Robin (cf. \cite{Rob})
already used a notion of benefit similar to that introduced in this article.

%--------------------------------------------------------------------------
\subsection{The function $G(p_k,m)$}\label{intrG}
%--------------------------------------------------------------------------

In step 6, the computation of the suffix of $g(n)$ 
leads  to the function $G(p_k,\!m)$, defined by

\begin{definition}
Let $p_k$ be the $k$-th prime, for some $k\ge 3$ and $m$ an
integer satisfying $0 \le m \le p_{k+1}-3$.
We define
\begin{equation}\label{G1}
G(p_k,m)=\max \frac{Q_1Q_2\ldots Q_s}{q_1q_2\ldots q_s}
\end{equation}
where the maximum is taken over the primes 
$Q_1,Q_2,\ldots,Q_s,q_1,q_2,\ldots,q_s$ ($s\ge 0$) satisfying 
\begin{equation}\label{qsQ}
3\le q_s < q_{s-1} < \ldots < q_1 \le p_k < p_{k+1} \le Q_1 < Q_2 < 
\ldots < Q_s
\end{equation}
and
\begin{equation}\label{Qimqi}
\sum_{i=1}^s (Q_i-q_i)\le m.
\end{equation}
\end{definition} 

This function $G(p_k,m)$ is interesting in itself.
It satisfies
\begin{equation}\label{lGm}
\ell(G(p_k,m))\le m.
\end{equation}
We study it in Section
\ref{grandpe}, where a combinatorial algorithm is given to compute its value
when $m$ is not too large. For $m$ large, a better 
algorithm is given in Section \ref{Glarge}.

Let us denote by $\mu_1(n) < \mu_2(n) < \ldots$ the increasing 
sequence of the primes
which do not divide $g(n)$, and by $P(n)$ the largest prime factor
of $g(n)$. It is shown in \cite{N1} that $\lim_{n\to\iy}P(n)/\mu_1(n)=1$.
We may guess from Proposition \ref{Gnew} that $\mu_1(n)$ can be 
much smaller than $P(n)$ while $\mu_2(n)$ is closer to $P(n)$. It seems 
difficult to prove any result in this direction.

%--------------------------------------------------------------------------
\subsection{The running time}
%--------------------------------------------------------------------------

Though we have the feeling that the algorithm presented in this paper 
(and implemented in {\sc Maple}) yields the value of $g(n)$ for all $n$'s up 
to $10^{15}$ (and eventually for greater $n$'s) in a reasonable time, 
it is not proved to do so.

Indeed, we do not know how to get an effective upper bound for the 
{\it benefit} of $g(n)$ (see sections \ref{benefit}, \ref{upbb} and 
\ref{openben}) and in the second and third
steps, what we do is just, for a given $n$, to provide such an upper bound
$B=B(n)$ by an experimental way.

In the fourth step, the algorithm determines a set $\cd(B)$ of plain
prefixes (cf. sections \ref{compplain} and \ref{upbb}). 
It turns out that the number $\nu(n)$ of these prefixes is 
rather small and experimentally satisfies $\nu(n)=O(n^{0.3})$ 
(cf. \eqref{n0.3});
but we do not know how to prove such a result, and it might
exist some values of $n$ for which $\nu(n)$ is much larger.

Let us now analyze each of the six steps described in Section
\ref{stepalgo}.

The first step determines the greatest superchampion number $N$ such that
$\ell(N)\le n$.
Let $S(x)=\sum_{p \le x} p$ be the sum of the primes up to $x$. The main 
part of this step is to compute $S(x)$ for $x$ close to $\sqrt{n\log n}$.
In our {\sc Maple} program, by Eratosthenes' sieve, we have precomputed 
a function close to $S(x)$, the details are given in Section \ref{compsup}. 
However, a faster way exists to evaluate $S(x)$. By extending 
Meissel's technique to compute $\pi(x)=\sum_{p \le x} 1$, 
(cf. \cite{DelRiv}), M. Del\'eglise is able to compute
$\sum_{p \le x} f(p)$ where $f$ is a multiplicative function. E. Bach 
(cf. \cite{Bach,BacSor}) has considered a wider class of functions for 
which this 
method also works. By his algorithm, M. Del\'eglise has computed $S(10^{18})$, 
and $S(x)$ costs $O(x^{2/3}/\log^2 x)$. We hope to implement soon this 
new evaluation of $S(x)$ in our first step.

The second and the fourth steps compute respectively $\cd(B')$ and
$\cd(B)$.  If $B'$ is ``well'' chosen, we may hope that
Card$(\cd(B'))$ is not much larger than $\nu(n)={\rm Card}(\cd(B))$.
The running time of the computation of $\cd(B')$ as explained in
Section \ref{compplain} could be larger than $\nu(n)$. For $n\approx
10^{20}$, most of the time of the computation of $g(n)$ is spent in
the second and fourth steps. But any precise estimation of these steps
seems unaccessible.

The running time of the third step is $O({\rm Card}(\cd(B')))$, 
and we may hope that it is $O(\nu(n))$.

In practice, the fifth step (finding the possible normalized prefixes) is 
fast. For every plain prefix $\pih$, Inequations \eqref{Som1} have at most
one solution, and the cost of this step is $O(\nu(n))$.

The sixth and last step also is fast. Under the strong assumption that
$\de_1(p)$ is polynomial in $\log p$ (see \eqref{m1maj}), 
for any $m$, the computation of
$G(p,m)$ (where $p$ is a prime satisfying $p\approx \sqrt{n\log n}$)
is polynomial in $\log n$, and the number of
normalized prefixes surviving the fight (cf. Section \ref{fight})
seems to be bounded (we have no examples of more than three of them), so that (see Section
\ref{heart}) 
this step might be polynomial in $\log n$.

%--------------------------------------------------------------------------
\subsection{Plan of the paper}
%--------------------------------------------------------------------------

In Section \ref{lemmas}, some mathematical lemmas are given.
The various steps of the algorithm presented in Section \ref{stepalgo} 
are explained in sections  \ref{superch}-\ref{Glarge}; 
Section \ref{perfnewalgo}  
presents some results while Section \ref{openpb} asks 
five open problems.

%--------------------------------------------------------------------------
\subsection{Notation}
%--------------------------------------------------------------------------

We denote by $\cp=\{2,3,5,7,\ldots\}$ the set of primes, by 
$p\in \cp$ a generic prime, by $p_i$ the $i$-th prime
and by $v_p(N)$ the $p$-adic valuation of $N$, that is the greatest integer
$\al$ such that $p^\al$ divides $N$. $Q_i$ and $q_i$ also denote primes,
except in Lemma \ref{lemme1} which is stated in a 
more general form, but 
which is used with $Q_i$ and $q_i$ primes. The integral part of 
a real number $t$ is denoted by $\fl{t}$. The additive function
$\ell$ can be easily extended to a rational number by setting
$\ell(A/B)=\ell(A)-\ell(B)$ (with $A$ and $B$ coprime).

%--------------------------------------------------------------------------
\section{The basic algorithm}\label{basic}
%--------------------------------------------------------------------------

%--------------------------------------------------------------------------
\subsection{The first version}
%--------------------------------------------------------------------------

For $j\ge 0$, let us denote by $\cs_j$ the set of numbers having only
$p_1,p_2,\ldots,p_j$ as prime divisors
\begin{equation}\label{Sj}
\cs_j=\{M; p\dv M \; \LR \; p\le p_j\}.
\end{equation}
We have $\cs_0=\{1\}$, $\cs_1=\{1,2,4,8,16,\ldots\}$. The algorithm described
in \cite{N4} computes the functions
\begin{equation}\label{gj}
g_j(n)=\max_{M\in\cs_j,\ \ell(M)\le n} M
\end{equation}
which obviously satisfy the induction relation
\begin{equation}\label{gjrec}
g_j(n)=\max\left[g_{j-1}(n),p_jg_{j-1}(n-p_j),\ldots,
p_j^kg_{j-1}(n-p_j^k)\right]
\end{equation}
where $k$ is the largest integer such that $p_j^k\le n$, and 
$g_0(n)=1$ for all $n\ge 0$. Using the upper bound \eqref{Pgn}, we
write the following {\sc Maple} procedure:

\renewcommand{\algorithmicendfor}{\textbf{endo}}

\begin{algorithm}\label{basicalg}
\caption{The basic algorithm: this {\sc Maple} procedure computes
$g(n)$ for $0 \le n \le N$ and stores the results in table $g$.}
\begin{algorithmic}
\STATE{gden:=  proc(N)}
local $n,g,pmax,p,k,a$
\FOR{$n$ from $0$ to $N$}
\STATE{$g[n]:= 1$}
\ENDFOR;
\STATE{$pmax:= floor(1.328 \star eval(sqrt(N \star \log N)));$}
\STATE{$p:= 2;$}
\WHILE{$p \le pmax$}
\FOR{$n$ {\bf from} $N$ {\bf to} $p$ {\bf by} $-1$}
\FOR{$k$ {\bf from} $1$ {\bf while} $p^k \le n$}
\STATE{$a:=p^k\star g[n-p^k];$}
\IF{$g[n] < a$}
\STATE{$g[n]:=a$}
\ENDIF
\ENDFOR
\ENDFOR;
\STATE{p:=nextprime(p)}
\ENDWHILE;\\
\textbf{end;}
\end{algorithmic}
\end{algorithm}

The running time of this procedure 
is 13 hours for $N=10^6$ on a 3 Ghz Pentium 4 with a storage of 337 Mo.
To compute $g(n)$, $1 \le n\le N$, the theoretical running time is 
$\co\left(N^{3/2}/\sqrt{\log N}\right)$
and the needed memory is $\co(N)$ integers of size $\exp(O(\sqrt{N\log N}))$. 

%--------------------------------------------------------------------------
\subsection{The merging and pruning algorithm}\label{merpru}
%--------------------------------------------------------------------------

\ni
The above algorithm takes a very long time to compute $g_j(n)$ 
when $j$ is small. It is better to represent $(g_j(n))_{n\ge 1}$ by a list 
$L_j=[[M_1,l_1],\ldots,[M_i,l_i],\ldots]$ 
(where $l_i=\ell(M_i)$) ordered so that $M_{i+1}>M_i$ and $l_{i+1} > l_i$. If 
$l_i \le n< l_{i+1}$, then $g_j(n)=M_i$. So, $L_0=[[1,0]]$ and 
$L_1=[[1,0],[2,2],[4,4],[8,8],\ldots]$. 

To calculate $L_{j+1}$ from $L_{j}$ we construct the list of all elements
$[M_i p_{j+1}^a,l_i+\ell(p_{j+1}^a)]$ for all elements $[M_i,l_i]\in L_j$
and $a\ge 0$ such that $l_i+\ell(p_{j+1}^a) \le N$. We sort 
this new list with respect to the first term of the elements 
(merge sort is here specially recommended) to get a list $\La=[[K_1,\la_1],
[K_2,\la_2],\ldots]$ with $K_1 < K_2 < \ldots$ Now, to take \eqref{gjrec}
into account, we have to prune
the list $\La$: if $K_r < K_s$ and 
$\la_r \ge \la_s$, we take off the element $[K_r,\la_r]$ from the list $\La$.
The list $L_{j+1}$ will be the pruned list of $\La$.

%--------------------------------------------------------------------------
\section{Two lemmas}\label{lemmas}
%--------------------------------------------------------------------------

\begin{lem}\label{lemme1}
Let $s$ be a non-negative integer, and $t_1,q_1,q_2,\ldots,
q_s,Q_1,Q_2,\ldots,$ $Q_s$ be real numbers satisfying
\begin{equation}\label{hyplem1}
0<t_1\le q_s<q_{s-1}<\ldots<q_1<Q_1<Q_2<\ldots <Q_s.
\end{equation}
If we set \dsm{S=\sum_{i=1}^s Q_i-q_i},
then the following inequality holds:
\[
\frac{Q_1 Q_2\ldots Q_s}{q_1 q_2\ldots q_s} \le 
\exp \left(\frac{S}{t_1}\right). \leqno{1.}
\]
Moreover, if $s\ge 1$ and $S < Q_1$, we have
\[
\frac{Q_1 Q_2\ldots Q_s}{q_1 q_2\ldots q_s} \le \frac{Q_s}{Q_s-S} 
< \frac{Q_{s-1}}{Q_{s-1}-S} < \ldots < \frac{Q_1}{Q_1-S} \leqno{2.}
\]   
with the first inequality in 2. strict when $s\ge 2$.   
\end{lem}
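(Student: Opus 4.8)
The plan is to treat the two assertions separately: the first by a one-step convexity estimate, the second by induction on $s$.

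\textbf{Assertion 1.} Put $d_i=Q_i-q_i>0$, so that $S=\sum_{i=1}^s d_i$. Since $q_i\ge q_s\ge t_1>0$ for every $i$, the elementary inequality $\log(1+u)\le u$ gives
\[
\log\frac{Q_i}{q_i}=\log\left(1+\frac{d_i}{q_i}\right)\le\frac{d_i}{q_i}\le\frac{d_i}{t_1}\qquad(1\le i\le s).
\]
Summing over $i$ and exponentiating yields $\dfrac{Q_1\cdots Q_s}{q_1\cdots q_s}\le\exp(S/t_1)$; the case $s=0$ is the trivial $1\le e^0$.

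\textbf{Assertion 2.} The chain of strict inequalities is easy: $S=\sum_{i=1}^s d_i>0$, $S<Q_1<Q_2<\cdots<Q_s$, and $x\mapsto\frac{x}{x-S}=1+\frac{S}{x-S}$ is strictly decreasing on $(S,\infty)$, so $\frac{Q_s}{Q_s-S}<\frac{Q_{s-1}}{Q_{s-1}-S}<\cdots<\frac{Q_1}{Q_1-S}$, which for $s\ge2$ is genuinely a chain of $s-1\ge1$ strict inequalities. The substance is the first inequality $R_s:=\dfrac{Q_1\cdots Q_s}{q_1\cdots q_s}\le\dfrac{Q_s}{Q_s-S}$, strict for $s\ge2$, which I would prove by induction on $s$. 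For $s=1$ it is the identity $\frac{Q_1}{q_1}=\frac{Q_1}{Q_1-(Q_1-q_1)}$. For $s\ge2$, I would delete the \emph{largest} pair $(q_s,Q_s)$; the remaining numbers still satisfy \eqref{hyplem1} with $s-1$ pairs, and with $S':=\sum_{i=1}^{s-1}(Q_i-q_i)=S-(Q_s-q_s)$ one has $0<S'<S<Q_1$, so the induction hypothesis gives $R_{s-1}:=\dfrac{Q_1\cdots Q_{s-1}}{q_1\cdots q_{s-1}}\le\dfrac{Q_{s-1}}{Q_{s-1}-S'}$. The key observation is the identity
\[
Q_s-S=q_s-S',
\]
and since $S<Q_1\le Q_s$ we get $Q_s-S>0$, hence $q_s>S'$ and $Q_{s-1}>S'$, so every denominator that appears is positive. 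Writing $R_s=R_{s-1}\cdot\frac{Q_s}{q_s}$ and multiplying the induction hypothesis by $\frac{Q_s}{q_s}$, the bound $R_s<\frac{Q_s}{Q_s-S}$ reduces, after clearing positive denominators and substituting the identity, to $Q_{s-1}(q_s-S')<q_s(Q_{s-1}-S')$, i.e.\ to $q_sS'<Q_{s-1}S'$; this holds because $S'>0$ and $q_s<q_1<Q_1\le Q_{s-1}$, and it is strict, giving the claimed strictness for $s\ge2$.

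I expect the only delicate point to be organizing the induction correctly. Removing the largest pair $(q_s,Q_s)$ rather than the smallest is exactly what makes the reduced sum $S'$ recombine with the surviving variables through the clean identity $Q_s-S=q_s-S'$ and collapses the inductive step to the trivial strict inequality $q_s<Q_{s-1}$; the remaining checks — positivity of the denominators and that the truncated configuration still obeys \eqref{hyplem1} so the induction hypothesis is legitimately available — are routine.
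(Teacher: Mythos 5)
Your proof is correct. Part 1 is the paper's argument ($1+u\le e^u$ applied factor by factor), and Part 2 is the same induction on $s$ as in the paper, except that you peel off the outer pair $(q_s,Q_s)$ and use the bound $Q_{s-1}/(Q_{s-1}-S')$ from the chain, whereas the paper peels off the inner pair $(q_1,Q_1)$, uses $Q_s/(Q_s-S')$, and closes with its ``Principle 1'' on products with fixed sum --- your closing inequality $Q_{s-1}(q_s-S')<q_s(Q_{s-1}-S')$ is that same principle in disguise, since the two pairs have equal sums and $q_s<Q_{s-1}$.
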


\begin{proof}
Lemma \ref{lemme1} is a slight improvement of 
Lemma 3 of \cite{N2} where, in 2., only the upper bound $Q_1/(Q_1-S)$ 
was given.
Point 1. is easy by applying $1+u\le \exp u$ to $u=Q_i/q_i-1$. 
Let us prove  2. by induction. For $s=1$, 2. is an equality. 
Let us assume that $s\ge 2$. Setting $S'=\sum_{i=2}^s Q_i-q_i=S-(Q_1-q_1)$, 
we have $S'<S<Q_1<Q_s$ and by induction hypothesis, we get
\begin{equation}\label{p1lem1}
\frac{Q_1 Q_2\ldots Q_s}{q_1 q_2\ldots q_s} = \frac{Q_1}{q_1}\;
\frac{Q_2\ldots Q_s}{q_2\ldots q_s}\le \frac{Q_1}{q_1}\;\frac{Q_s}{Q_s-S'}\cdot
\end{equation}
\ni
We shall use the following principle:
\begin{principe} 
If $x$ and $y$ add to a constant, the product $xy$ 
decreases when $|y-x|$ increases.    
\end{principe}

We have $Q_s-S'\le Q_s-(Q_s-q_s)=q_s < q_1$, and
using Principle 1, we get by increasing $q_1$ to $Q_1$ and
decreasing $Q_s-S'$ to $Q_s -S$
\[
q_1(Q_s-S') > Q_1(Q_s-S)
\]
which, from \eqref{p1lem1}, proves 2..
\end{proof}

\begin{lem}\label{lemx2x1}
Let $x > 4$ and $y=y(x)$ be 
defined by
\dsm{
\frac{y^2-y}{\log y}=\frac{x}{\log x}\cdot
}
The function $y$ is an increasing function satisfying $y(x) > 2$ and 
\begin{enumerate}
\item
\dsm{
y(x)=\sqrt{\frac x2}\left(1-\frac{\log 2}{2\log x}-
\frac{(4+\log 2)\log 2}{8\log^2 x}
+\co\left(\frac{1}{\log^3 x}\right) \right), \quad x\to\iy
}
\item
$y(x) < \sqrt x$  for $x > 4$.   
\item
\dsm{y(x) \le \sqrt{\frac x2}} for $x\ge 80.$
\end{enumerate}
\end{lem}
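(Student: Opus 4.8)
I would reduce everything to the monotonicity of the auxiliary function $f(y)=(y^2-y)/\log y=y(y-1)/\log y$, which is defined and positive on $(1,\infty)$, tends to $1$ as $y\to 1^+$ and to $\infty$ as $y\to\infty$. First I would check that $f$ is strictly increasing on $(1,\infty)$: since $f'(y)=\bigl((2y-1)\log y-(y-1)\bigr)/\log^2 y$, it suffices that $h(y):=(2y-1)\log y-(y-1)$ be positive for $y>1$, and this follows from $h(1)=0$, $h'(1)=0$ and $h''(y)=2/y+1/y^2>0$. Hence $f$ is a strictly increasing bijection of $(1,\infty)$ onto $(1,\infty)$. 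As $x\mapsto x/\log x$ is increasing on $(e,\infty)\supset(4,\infty)$ and $4/\log 4=2/\log 2=f(2)$, for every $x>4$ the value $x/\log x$ lies in $(f(2),\infty)$, so the equation $f(y)=x/\log x$ has a unique solution $y(x)$ in $(1,\infty)$, which satisfies $y(x)>2$; moreover $y=f^{-1}(x/\log x)$ is increasing, being a composition of increasing functions (equivalently, implicit differentiation gives $f'(y)\,y'=(\log x-1)/\log^2 x>0$). This settles the first sentence of the lemma.

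For items 2 and 3, the point is that, $f$ being increasing, the inequalities $y(x)<\sqrt x$ and $y(x)\le\sqrt{x/2}$ are equivalent to $x/\log x=f(y(x))<f(\sqrt x)$ and $x/\log x\le f(\sqrt{x/2})$ respectively, and both right-hand sides are elementary. One computes $f(\sqrt x)=2(x-\sqrt x)/\log x$, so $y(x)<\sqrt x\iff x<2x-2\sqrt x\iff\sqrt x>2\iff x>4$. Likewise $f(\sqrt{x/2})=(x-\sqrt{2x})/(\log x-\log 2)$, and after clearing the (positive) denominators, $y(x)\le\sqrt{x/2}\iff\sqrt x\ge(\sqrt2/\log2)\log x$; to conclude I would note that $\varphi(x):=\sqrt x-(\sqrt2/\log2)\log x$ has $\varphi'(x)>0$ as soon as $\sqrt x>2\sqrt2/\log2$, i.e.\ for $x>8/\log^2 2\approx16.7$, and that $\varphi(80)>0$ — the only numerical check, equivalent to $40\log^2 2>\log^2 80$ — whence $\varphi\ge\varphi(80)>0$ on $[80,\infty)$.

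The substantial part is item 1, for which the plan is a standard bootstrap. Writing $y=\sqrt{x/2}\,u$ and $L=\log x$, substitution into the defining equation yields, after simplification, the exact identity
\[
u^2=1-\frac{\log 2}{L}+\frac{2\log u}{L}+\frac{\sqrt2\,u}{\sqrt x},
\]
whose last term is $\co(e^{-L/2})$, negligible against every power of $1/L$. A crude preliminary estimate $u=1+\co(\log\log x/\log x)$ comes from item 2 (which gives $u<\sqrt2$, hence $u<1+\co(1/\sqrt x)$ after one pass through the identity) together with a couple of self-improving steps on the lower side: since $y(x)\to\infty$, eventually $y>e$, so $y^2-y\ge x/\log x$, hence $y\ge\sqrt{x/\log x}$, hence $\log y\ge\frac12(L-\log\log x)$, and so on. Substituting this back into the identity and iterating, each round turns a residual error $\co(\log\log x/\log^k x)$ into one of the form $\co(\log\log x/\log^{k+1}x)$, so after a few rounds the residue is $\co(\log\log x/\log^4 x)$, which is absorbed into $\co(1/\log^3 x)$; at the same time the coefficients of $1/L$ and $1/L^2$ stabilise at $-\frac{\log 2}{2}$ and $-\frac{(4+\log 2)\log 2}{8}$, giving the claimed expansion. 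The main — and essentially only — obstacle is keeping the error terms honest through the bootstrap, in particular pushing the $\log\log x$ picked up in the crude lower bound on $y$ down to an order at which $\co(1/\log^3 x)$ swallows it; the remaining manipulations are routine algebra.
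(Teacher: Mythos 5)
Your proposal is correct, and it is strictly more complete than the paper's proof. For item 2 you do exactly what the paper does: use the monotonicity of $f(t)=(t^2-t)/\log t$ to reduce $y(x)<\sqrt x$ to $f(\sqrt x)=\frac{x-\sqrt x}{\frac12\log x}>\frac{x}{\log x}$, i.e.\ $\sqrt x>2$ (you additionally justify the monotonicity via $h(t)=(2t-1)\log t-(t-1)$, $h(1)=h'(1)=0$, $h''>0$, which the paper takes for granted). For items 1 and 3, however, the paper simply cites \cite{MNR1}, p.~227, whereas you supply direct arguments. Your reduction of item 3 to $\sqrt x\ge(\sqrt2/\log2)\log x$ via $f(\sqrt{x/2})=(x-\sqrt{2x})/(\log x-\log2)$ is clean and checks out; be aware that the final numerical inequality $40\log^2 2>\log^2 80$ is extremely tight ($19.2181\ldots$ versus $19.2022\ldots$), so the constant $80$ in the lemma is essentially sharp and the verification must be done with enough precision. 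Your bootstrap for item 1 is the standard route and the exact identity $u^2=1-\frac{\log2}{L}+\frac{2\log u}{L}+\frac{\sqrt2\,u}{\sqrt x}$ is correct; the coefficients it produces ($a=-\frac{\log2}{2}$, $b=-\frac{(4+\log2)\log2}{8}$ from $2a=-\log2$ and $a^2+2b=2a$) match the statement, and your mechanism for eliminating the spurious $\log\log x$ factors (one extra iteration, after which the $\log\log x/\log^4x$ residue is absorbed into $\co(1/\log^3x)$) is sound. A small simplification you could note: item 3 already gives the upper bound $u\le1$ for $x\ge80$, so the preliminary pass through the identity to get $u<1+\co(1/\sqrt x)$ from $u<\sqrt2$ is not strictly needed, though it keeps item 1 independent of item 3.
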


\begin{proof}
1. and 3. are proved in \cite{MNR1}, p. 227. 
Since $t\mapsto (t^2-t)/\log t$ is increasing for $t>1$, in order
to show 2., one should prove 
\dsm{\frac{x -\sqrt{x}}{\frac 12 \log x}  > \frac{x}{\log x}} which holds 
for $x > 4$. 
\end{proof}

%The function $y(x)$ defined in Lemma \ref{lemx2x1} below plays an 
%important role in the study of $g(n)$. We recall its behaviour when $x\to\iy$.

%--------------------------------------------------------------------------
%\section{${\mathbb \ell}$-superchampion numbers\label{superch}}
\section{The superchampion numbers}\label{superch}
%--------------------------------------------------------------------------
%--------------------------------------------------------------------------
%\subsection{Definition and some properties}
%--------------------------------------------------------------------------

%--------------------------------------------------------------------------
%\subsection{Minimizing an additive function}
%--------------------------------------------------------------------------

\begin{definition}
An integer $N$ is said $\ell$-superchampion (or more 
simply superchampion) if there exists $\rho>0$ such that, for all $M\ge 1$
\begin{equation}\label{super}
\ell(M)-\rho \log M \ge \ell(N) -\rho \log N.
\end{equation} 
When this is the case, we say that $N$ is a  $\ell$-superchampion
\emph{associated} to $\rho$.
\end{definition}

Geometrically, if we represent $\log M$ in abscissa and $\ell(M)$ in ordinate,
the straight line of slope $\rho$ going through the point $(\log M,\ell(M))$ 
has an intersep equal to $\ell(M)-\rho\log(M)$ and so, the superchampion 
numbers are the vertices of the convex envelop of all these 
points (see {\sc Fig}. \ref{figconvexe}).
\begin{figure}
\hspace{1.5cm}
\includegraphics[height=90mm]{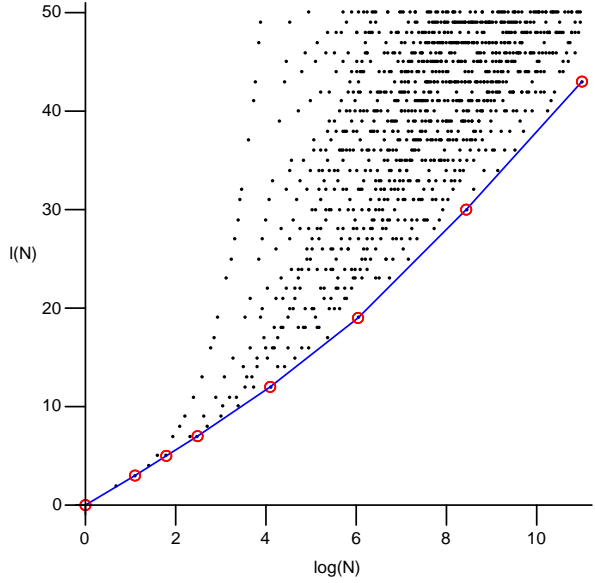}
\caption{The points $(\log(N),\ell(N))$, with $\ell(N) \le 50$, for 
$1 \le N \le 60060$.}
\label{figconvexe}
\end{figure}

Similar numbers, the so-called {\it superior highly composite numbers} were 
first introduced by S. Ramanujan (cf. \cite{Ram}). The $\ell$-superchampion
numbers were already used in \cite{N1,N2,Mas,MNR1,MNR2,N60,N73}.
The first ones are (with, in the third column, the corresponding values
of $\rho$) shown in  {\sc Fig}. \ref{firstchampions}.

\begin{lem}
If $N$ is an $\ell$-superchampion, the following property holds:
\begin{equation}\label{NgN}
N=g(\ell(N)).
\end{equation}
\end{lem}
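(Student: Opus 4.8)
The plan is to show that for any $M \ge 1$ with $\ell(M) \le \ell(N)$ we have $M \le N$; by Landau's characterization \eqref{g}, this gives $N = \max_{\ell(M)\le \ell(N)} M = g(\ell(N))$. Note first that $N$ itself satisfies $\ell(N) \le \ell(N)$, so $g(\ell(N)) \ge N$; the content is the reverse inequality.

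So let $\rho > 0$ be such that $N$ is an $\ell$-superchampion associated to $\rho$, and let $M$ be any positive integer with $\ell(M) \le \ell(N)$. Applying the defining inequality \eqref{super} to $M$ gives
\[
\ell(M) - \rho \log M \ge \ell(N) - \rho \log N,
\]
whence
\[
\rho \log N - \rho \log M \ge \ell(N) - \ell(M) \ge 0.
\]
Since $\rho > 0$, this yields $\log N \ge \log M$, i.e. $M \le N$. As $M$ was an arbitrary integer with $\ell(M) \le \ell(N)$, we conclude $g(\ell(N)) = \max_{\ell(M)\le \ell(N)} M \le N$, and combined with $g(\ell(N)) \ge N$ we obtain \eqref{NgN}.

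There is essentially no obstacle here: the argument is a one-line manipulation of the superchampion inequality, and the only thing to be careful about is invoking Landau's formula \eqref{g} in the right direction and checking the trivial inequality $g(\ell(N)) \ge N$ so that the maximum is actually attained at $N$. One could alternatively phrase the whole thing via \eqref{pcarg}: $\ell(M) \le \ell(N)$ together with $M > N$ would force, through \eqref{super}, the contradiction $\ell(N) - \ell(M) \ge \rho \log(N/M) > 0$ while also $\ell(N) - \ell(M) \le 0$ — but the direct computation above is cleanest.
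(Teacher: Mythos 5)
Your proof is correct and follows essentially the same route as the paper: note $N \le g(\ell(N))$ from \eqref{pcarg}, then use the superchampion inequality \eqref{super} to show any $M$ with $\ell(M)\le\ell(N)$ satisfies $M\le N$, and conclude via \eqref{g}. Nothing to add.
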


\begin{proof}
Indeed, let $N$ be any positive number and $n=\ell(N)$; 
it follows from \eqref{pcarg} that $N\le g(n)=g(\ell(N))$. 
If moreover $N$ is a 
$\ell$-superchampion, then, for all $M$ such that 
$\ell(M)\le n=\ell(N)$, from \eqref{super}, we have
$\rho\log M \le \rho \log N +\ell(M)-\ell(N)
\le \rho \log N$ which implies $M\le N$, and thus, from \eqref{g}, 
\eqref{NgN} holds.
\end{proof}

\begin{definition}
%%Let be $\rho > \dfrac{2}{\log 2}=\dfrac{4}{\log 4} > \exp(1)$.
\mbox{}
\begin{enumerate}
\item
For each prime $p\in \cp$, let us define the sets
\begin{equation}\label{Ep}
\ce_p' = \set{\frac{p}{\log p}},
\quad
\ce_p\sg =\left\{\frac{p^2-p}{\log p}\,,\ldots,
\frac{p^{i+1}-p^i}{\log p}\,,\ldots \right\},
\quad
\ce_p = \ce_p' \cup \ce_p\sg.
\end{equation}
\item
And we define
\begin{equation}\label{Edef}
\ce'= \bigcup_{p \in \Primes} \ce_p',
\quad
\ce\sg =\bigcup_{p \in \Primes} \ce_p\sg
\quad\text{and}\quad
\ce = \ce' \cup \ce\sg.
\end{equation}
\end{enumerate}
\end{definition}

\begin{rem}
Note that all the elements of $\ce_p$ are distinct at the exception, for $p=2$,
of $\dfrac{2}{\log 2}=\dfrac{2^2-2}{\log 2}$ and that, for 
$p \ne q$, $\ce_p\cap \ce_q=\emptyset$ holds. 
\end{rem}

\begin{lem}\label{lemNro1}
Let $\rho$ a real number. %\dsm{\rho \ne \frac{2}{\log 2} \approx 2.88}.
\begin{enumerate}
\item
If $\rho \in \ce_p$, $\rho \ne \frac{2}{\log 2}$,
there exist exactly $2$ superchampion
numbers associated to $\rho$. Let be $N_\rho$ the smaller
one and $N_\rho^+$ the bigger one. Then
$N_\rho^+ = p N_\rho$ and
\begin{equation}\label{Nrho}
N_\rho=\prod_{p/\log p\, < \rho} p^{\al_p} 
\qtx{with}
\al_p\! = \!\left\{
\begin{array}{ll}
1 & \qtx{if}
\dfrac{p}{\log p} < \rho \le \dfrac{p^2-p}{\log p}\\[3ex]
i & \qtx{if}
\dfrac{p^i-p^{i-1}}{\log p} < \rho \le \dfrac{p^{i+1}-p^i}{\log p}
\end{array}
\right.
\end{equation}
\begin{equation}\label{Nrhop}
N_\rho^+=\prod_{p/\log p\, \le\rho} p^{\al_p^+}
\qtx{with}
\al_p^+ \! = \!\left\{
\begin{array}{ll}
1&  \qtx{if}
\dfrac{p}{\log p} \le \rho < \dfrac{p^2-p}{\log p}\\[3ex]
i &\qtx{if}
\dfrac{p^i-p^{i-1}}{\log p} \le \rho < \dfrac{p^{i+1}-p^i}{\log p}
\end{array}
\right.
\end{equation}
\item
If \dsm{\rho = \frac{2}{\log 2}=\frac{2^2-2}{\log 2}\in\ce},
there exist $3$ superchampion numbers associated to $\rho$: $N_\rho$
defined by \eqref{Nrho} is equal to $3$, $N_\rho^+$ defined by \eqref{Nrhop} 
is equal to $12$ and the third one is $6$. 
\item
If $\rho \not \in \ce$, there exists a unique superchampion
number $N_\rho=N_\rho^+$ associated to $\rho$. Its value is given by 
both formulas \eqref{Nrho} and \eqref{Nrhop}. Let $\rho'$ and $\rho''$ be
the two consecutive elements of $\ce$ such that $\rho'< \rho < \rho''$. 
Then we have $N_\rho=N_{\rho''}=N_{\rho'}^+$.
\item
Let us consider the sequence $\rho^{(i)}$ defined by $\rho^{(0)}=-\iy$,
$\rho^{(1)}=3/\log 3$, $\rho^{(2)}=2/\log 2$,
$\rho^{(3)}=(2^2-2^1)/\log 2=\rho^{(2)}$, $\rho^{(4)}=5/\log 5$ and such that
$\set{\rho^{(i)}, i\ge 1}=\ce$ and $\rho^{(i)} > \rho^{(i-1)}$ for $i\ge 4$.
If $N^{(0)}=1$, $N^{(1)}=3$, $N^{(2)}=6$, $N^{(3)}=12$, $N^{(4)}=60$, etc...
is the increasing sequence of all superchampion numbers, it satisfies:
\begin{enumerate}
\item
 For $i\ge 0$, $N^{(i)}$ divides $N^{(i+1)}$ and the quotient 
$N^{(i+1)}/N^{(i)}$ is a prime number. The number of prime factors of
$N^{(i)}$, counting them with multiplicity, is equal to $i$.
\item
For $i\ne 2$, we have $N^{(i)}=N_{\rho^{(i)}}^+=N_{\rho^{(i+1)}}$ where
$N_{\rho^{(i)}}^+$ and $N_{\rho^{(i+1)}}$ are defined respectively in 
\eqref{Nrho} and \eqref{Nrhop}. 
\item
For all $i\ge 0$, $N^{(i)}$ is associated to $\rho$ if and only if
$\rho^{(i)}\le \rho \le \rho^{(i+1)}$.
\item
If $i\ne 1$ (i.e., $N^{(i)}\ne 3$), then $v_p(N^{(i)})$ is a non-increasing 
function of the prime $p$. 
\end{enumerate}
 \end{enumerate}
\end{lem}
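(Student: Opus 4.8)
The plan is to reduce the whole statement to an elementary one--variable analysis, carried out independently for each prime. Writing $M=\prod_{p}p^{v_p(M)}$ and using the complete additivity of $\log$ together with \eqref{l}, the quantity to be minimised over $M\ge1$ in \eqref{super} splits as
\[
\ell(M)-\rho\log M=\sum_{p\in\cp}c_\rho(p,v_p(M)),\qquad c_\rho(p,0):=0,\quad c_\rho(p,\alpha):=p^\alpha-\rho\,\alpha\log p\ \ (\alpha\ge1),
\]
where, for fixed $\rho$, only finitely many primes admit an exponent making their contribution negative, so the minimum over $M\ge1$ is attained; moreover $N$ is an $\ell$-superchampion associated to $\rho$ precisely when, for every prime $p$, the exponent $v_p(N)$ belongs to the set $\mathcal{M}_\rho(p)$ of minimisers of $\alpha\mapsto c_\rho(p,\alpha)$. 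Thus the lemma amounts to describing each $\mathcal{M}_\rho(p)$ and then taking the (finite) Cartesian product of these sets.

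Next I would settle the one--prime picture. The increments are $c_\rho(p,1)-c_\rho(p,0)=p-\rho\log p$ and $c_\rho(p,\alpha)-c_\rho(p,\alpha-1)=(p^\alpha-p^{\alpha-1})-\rho\log p$ for $\alpha\ge2$; since $\alpha\mapsto p^\alpha-p^{\alpha-1}$ is strictly increasing and $p<p^2-p$ for $p\ge3$ while $2=2^2-2$, these increments are non-decreasing, strictly so except for the single coincidence at $(p,\alpha)=(2,2)$. Hence $\alpha\mapsto c_\rho(p,\alpha)$ is weakly decreasing and then strictly increasing, so $\mathcal{M}_\rho(p)$ is a block of consecutive integers; an increment at $\alpha$ is negative, zero or positive according as $\rho$ is greater than, equal to or smaller than the associated threshold, namely $p/\log p$ for $\alpha=1$ and $(p^\alpha-p^{\alpha-1})/\log p$ for $\alpha\ge2$ --- that is, exactly the elements of $\ce_p$ in \eqref{Ep}. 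Consequently $\mathcal{M}_\rho(p)$ is a singleton unless $\rho\in\ce_p$, in which case it consists of the two exponents flanking the vanishing increment, the sole exception being $\rho=2/\log2=(2^2-2)/\log2$, where two consecutive increments vanish and $\mathcal{M}_\rho(2)=\{0,1,2\}$.

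Then I would globalise, invoking the remark that the sets $\ce_p$ are pairwise disjoint and that the only repetition inside a single $\ce_p$ is the $p=2$ coincidence. If $\rho\notin\ce$, every $\mathcal{M}_\rho(p)$ is a singleton, so the superchampion is unique; since no threshold is reached, the strict and non-strict endpoints in \eqref{Nrho} and \eqref{Nrhop} produce the same exponents, and the identities $N_\rho=N_{\rho''}=N_{\rho'}^+$ follow by inspecting \eqref{Nrho}--\eqref{Nrhop} --- this is 3. If $\rho\in\ce_p$ with $\rho\ne2/\log2$, only the prime $p$ has $|\mathcal{M}_\rho(p)|=2$, say $\mathcal{M}_\rho(p)=\{\alpha_p,\alpha_p+1\}$, so there are exactly two superchampions, differing by the factor $p$: the smaller takes $\alpha_p$, which is the right--closed convention of \eqref{Nrho} (hence it is $N_\rho$), the larger takes $\alpha_p+1$, the left--closed convention of \eqref{Nrhop} (hence it is $N_\rho^+=pN_\rho$) --- this is 1. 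At $\rho=2/\log2$ one reads off $\mathcal{M}_\rho(2)=\{0,1,2\}$, $\mathcal{M}_\rho(3)=\{1\}$ and $\mathcal{M}_\rho(p)=\{0\}$ for $p\ge5$ (because $3/\log3<2/\log2<5/\log5$), giving precisely the superchampions $\{3,6,12\}$ --- this is 2. For 4., I would let $\rho$ run through $(-\infty,+\infty)$: on each open interval between consecutive elements of $\ce$ all exponents $\alpha_p$ are constant, and crossing a threshold in some $\ce_p$ raises exactly one exponent by $1$; hence the superchampions are totally ordered by divisibility, consecutive ones differ by a prime factor, and $N^{(i)}$ has exactly $i$ prime factors counted with multiplicity (the double point $\rho^{(2)}=\rho^{(3)}=2/\log2$ accounting for the extra superchampion $6$), which yields (a), (b), (c). For (d), $v_p(N^{(i)})\ge j$ holds iff $\rho^{(i)}$ is at least the $j$-th threshold of $\ce_p$; as $p\mapsto(p^j-p^{j-1})/\log p$ is increasing on $[2,\infty)$ for $j\ge2$ and $p\mapsto p/\log p$ is increasing for $p\ge3$ (but $3/\log3<2/\log2$), one gets $v_q(N^{(i)})\ge j\Rightarrow v_p(N^{(i)})\ge j$ whenever $p<q$, the unique exception being $p=2$, $q=3$, $j=1$, i.e. $N^{(1)}=3$, which is exactly why $i\ne1$ must be excluded.

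I expect the main work to be bookkeeping rather than any real difficulty: matching the strict versus non-strict inequalities of \eqref{Nrho}--\eqref{Nrhop} to the ``smaller'' and ``larger'' minimising exponents, and keeping track of the degenerate identity $2/\log2=(2^2-2)/\log2$ everywhere it intrudes --- it is responsible both for the triple point in 2. and for the exceptional index in 4.(d). Everything else rests only on the convexity of $\alpha\mapsto p^\alpha$ and the monotonicity of $x/\log x$ on $[e,\infty)$, which are elementary.
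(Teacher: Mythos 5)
Your proposal is correct and follows essentially the same route as the paper: decompose $\ell(M)-\rho\log M$ additively over primes, minimise the contribution of each prime separately by examining the non-decreasing increments $F(p^{i+1})-F(p^i)$, identify the thresholds with the elements of $\ce_p$, and use the disjointness of the $\ce_p$ (together with the single degeneracy $2/\log 2=(2^2-2)/\log 2$) to count the superchampions. Your treatment of part 4 (sweeping $\rho$ across the thresholds and the monotonicity of $p\mapsto(p^j-p^{j-1})/\log p$ for part (d)) is sound and in fact more explicit than the paper, which defers those details to a reference.
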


\begin{proof}
We are looking for an 
$N = \prod p^{\al_p}$ 
which minimizes 
$F(N) =  \ell(N) - \rho \log N$.

 An arithmetic function $h$ is said \emph{additive} if
  $h(M_1M_2)=h(M_1)+h(M_2)$ when $M_1$ and $M_2$ are coprime. The functions
  $\log$ and $\ell$ are additive. Thus $F$ is additive, and
  to minimize $F(N)= \sum_{p \dv N} F(p^{v_p(N)})$ we have to minimize 
  $F(p^\al)$ on $\al$ for each $p\in\cp$.  We have $F(1) = 0$ and 
  for $p$ prime and $i \ge 1$, $F(p^i)=p^i-\rho\, i\log p$. The difference
\begin{equation}\label{fpi}
F(p^{i+1}) - F(p^i) = 
\begin{cases}
p - \rho\log p &\text{ if } i = 0\\[2mm]
p^{i}(p-1) -\rho\log p &\text{ if } i > 0
\end{cases}
\end{equation}
is a non decreasing function of $i$ that tends to $+\infty$ with $i$.
Thus if $F(p) = F(p)-F(0) = p-\rho\log p > 0$, 
the smallest value of $F(p^\al)$ is $0$
obtained for $\al=0$.  If $F(p) \le 0$ let $i$ be the largest positive
integer such that $F(p^i) - F(p^{i-1}) \le 0$. Then the smallest value of
$F(p^\al)$ is obtained on the set $\setof{j \le i}{F(p^j) = F(p^i)}$
and the number of choices for $\al_p$ is the cardinal of this set.

This proves that we have more than one choice for the exponent $\al_p$
if and only if there exists $i \ge 0$ such that $F(p^i) = F(p^{i+1})$.
Due to \eqref{fpi} this is the case if and only if  $\rho \in \ce_p$.
Moreover, the sets $\ce_p$ being disjoint, there exists at most one $p$
for which there are more than one choice for $\al_p$. 

If $p \ge 3$ we have 
$p < (p^2-p) < (p^3-p^2) < \cdots$
and there is at most one $i$ such that $F(p^{i+1})-F(p^i)=0$,
so there are at most two choices for $\al_p$.

For $p = 2$ we have $2 = 2^2-2 < 2^3-2^2 < \cdots$
and for $\rho = 2/\log 2$  we have $F(1) = F(2) = F(2^2)$,
so we can choose for $\al_2$ every one of the three values $0,1,2$.
With this value of $\rho$ we have 
$F(3) = 3-(2/\log2)\log 3 < 0$ and $F(p) > 0$ for $p \ge 5$.
Thus there are 3 superchampion numbers associated to $\rho = 2/\log 2$ 
which are $3,6,12$.
This proves 1., 2., 3. and 4.; for more details, see \cite{N2}.

\end{proof}

\begin{figure}
\[
\begin{array}{|r|r|lccl|l|}
\hline
N  &\ell(N)&       &    &     &             &\\[0.5mm]
\hline
1  & 0 & -\iy      &  < &\rho &\le 3/\log 3 &\approx 2.73\\[0.5mm]
3  & 3 & 3/\log 3  &\le &\rho &\le 2/\log 2 &\approx 2.89\\[0.5mm]
6  & 5 &           &    &\rho &=(2^2-2^1)/\log 2&\approx 2.89\\[0.5mm]
12 & 7 & 2/\log 2  &\le &\rho &\le 5/\log 5&\approx3.11\\[0.5mm]
60 &12 & 5/\log 5  &\le &\rho &\le 7/\log 7&\approx3.60\\[0.5mm]
420&19 & 7/\log 7  &\le &\rho &\le 11/\log 11&\approx4.59\\[0.5mm]
4620&30 &11/\log 11  &\le &\rho &\le 13/\log 13&\approx5.07\\[0.5mm]
60060&43 & 13/\log 13  &\le &\rho &\le (3^2-3^1)/\log 3&\approx5.46\\[0.5mm]

\hline
\end{array}
\]
\caption{The first $\ell$-superchampion numbers.}
\label{firstchampions}
\end{figure}

\begin{lem}\label{defxj}
Let $\rho$ satisfy  $\rho \ge 5/\log 5 \approx 3.11$. There exists a unique
decreasing sequence $(x_j)=(x_j(\rho))$ such that 
$x_1 \ge \exp(1)$ and, for all $j \ge 2$,
$x_j$ satisfies $x_j > 1$ and
\begin{equation}\label{defxi}
%\forall j \ge 2
\quad
\frac{x_j^j-x_j^{j-1}}{\log x_j}
=\frac{x_1\vphantom{x_j^j98834}}{\ds\log x_1}=\rho.
\end{equation}
We have also
\begin{equation}\label{x14}
x_1 \ge 5
\qtx{ and }
x_2 > 2.
\end{equation}
\end{lem}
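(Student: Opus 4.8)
The plan is to treat the defining relation for each index $j$ separately. Fix $\rho \ge 5/\log 5$. For $j = 1$ the equation is simply $x_1/\log x_1 = \rho$; the function $t \mapsto t/\log t$ is continuous on $(1,\infty)$, tends to $+\infty$ at both ends, and has a unique minimum at $t = e$ with value $e$. Since $\rho \ge 5/\log 5 > e$, there are exactly two solutions, one in $(1,e)$ and one in $(e,\infty)$; the requirement $x_1 \ge \exp(1)$ picks out the larger one and makes $x_1$ unique. Moreover $x_1/\log x_1$ is increasing on $[e,\infty)$, so $x_1 = x_1(\rho)$ is an increasing function of $\rho$, and plugging in $\rho = 5/\log 5$ (the smallest allowed value) gives $x_1 \ge 5$, which is the first inequality in \eqref{x14}. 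For the monotonicity claim of the whole sequence I will in fact only need each $x_j$ to be individually well defined; the decrease $x_{j+1} < x_j$ is addressed in the last step.

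For $j \ge 2$ I would study, for $t > 1$, the function
\[
\varphi_j(t) = \frac{t^j - t^{j-1}}{\log t} = \frac{t^{j-1}(t-1)}{\log t}.
\]
As $t \to 1^+$ one has $t-1 \sim \log t$, so $\varphi_j(t) \to 1$; as $t \to \infty$, $\varphi_j(t) \to +\infty$. The key point is that $\varphi_j$ is strictly increasing on $(1,\infty)$: writing $\log \varphi_j(t) = (j-1)\log t + \log(t-1) - \log\log t$ and differentiating gives
\[
\frac{\varphi_j'(t)}{\varphi_j(t)} = \frac{j-1}{t} + \frac{1}{t-1} - \frac{1}{t\log t},
\]
and since $j \ge 2$ we have $(j-1)/t \ge 1/t > 1/(t\log t)$ for $t > e$, while for $1 < t \le e$ the term $1/(t-1)$ dominates $1/(t\log t)$ because $t-1 < \log t \cdot t$ is false — here one uses $1/(t-1) - 1/(t\log t) = (t\log t - t + 1)/(t(t-1)\log t)$ and the elementary inequality $t\log t \ge t - 1$ for $t \ge 1$. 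Hence $\varphi_j$ is a strictly increasing bijection from $(1,\infty)$ onto $(1,\infty)$, and since $\rho \ge 5/\log 5 > 1$ there is a unique $x_j > 1$ with $\varphi_j(x_j) = \rho$. This gives existence and uniqueness of each $x_j$.

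For $x_2 > 2$ in \eqref{x14}: $\varphi_2(2) = (4-2)/\log 2 = 2/\log 2 \approx 2.885 < 5/\log 5 \le \rho = \varphi_2(x_2)$, so by monotonicity of $\varphi_2$ we get $x_2 > 2$; the same comparison, applied at the smallest admissible $\rho$, also shows $x_2 > 2$ uniformly. Finally, for the strict decrease $x_{j+1} < x_j$: since all $x_j$ exceed $1$ and satisfy $\varphi_j(x_j) = \rho$, it suffices to note that for a fixed $t > 1$ the sequence $\varphi_j(t) = t^{j-1}(t-1)/\log t$ is strictly increasing in $j$ (ratio $t > 1$); hence $\varphi_{j+1}(x_j) = t\,\varphi_j(x_j)|_{t=x_j} = x_j \rho > \rho = \varphi_{j+1}(x_{j+1})$, and monotonicity of $\varphi_{j+1}$ forces $x_{j+1} < x_j$, so the sequence is decreasing as asserted. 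The main obstacle is the clean verification that each $\varphi_j$ is strictly increasing down to $t = 1$ — the $t \to 1^+$ regime is delicate because two of the three logarithmic-derivative terms blow up — but it reduces to the single elementary inequality $t\log t \ge t-1$, and everything else is bookkeeping.
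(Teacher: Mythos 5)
Your proof follows essentially the same route as the paper's: uniqueness of $x_1$ from the monotonicity of $t\mapsto t/\log t$ on $[e,+\infty)$, uniqueness of $x_j$ ($j\ge 2$) from the monotonicity of $\varphi_j(t)=t^{j-1}(t-1)/\log t$ on $(1,+\infty)$, and the decrease of the sequence from the increase of $j\mapsto \varphi_j(t)$ for fixed $t>1$. Your explicit verification that $\varphi_j$ is strictly increasing, reduced to the elementary inequality $t\log t\ge t-1$, is a worthwhile addition, since the paper asserts this monotonicity without proof.

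One step misfires: the uniform treatment of the decrease breaks at $j=1$. You write $\varphi_{j+1}(x_j)=x_j\varphi_j(x_j)=x_j\rho$, but the defining relation for $x_1$ is $x_1/\log x_1=\rho$, not $\varphi_1(x_1)=(x_1-1)/\log x_1=\rho$; in fact $\varphi_2(x_1)=x_1(x_1-1)/\log x_1=(x_1-1)\rho$, not $x_1\rho$. The conclusion $x_2<x_1$ survives, because $(x_1-1)\rho>\rho$ follows from $x_1\ge 5>2$, but as written the computation is wrong for that one index, and since the lemma asserts the whole sequence is decreasing this case must be covered. (The paper treats $x_1>x_2$ separately: from $x_2>2$ one gets $x_2^2-x_2>x_2$, hence $x_2/\log x_2<\rho=x_1/\log x_1$, and then $x_2<x_1$.) A secondary remark: the sentence justifying the increase of $\varphi_j$ on $1<t\le e$ is garbled as prose, but the inequality $t\log t\ge t-1$ you then invoke is correct and suffices for all $t>1$, so no mathematical gap remains there.
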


\begin{proof}
The uniqueness of $x_1$ results from $\rho > \exp(1)$ and the fact that
$t\mapsto t/\log t$ is an increasing bijection of $[\exp(1),+\infty[$.
The uniqueness of $x_j$ for $j \ge 2$ comes from the fact
that $t \mapsto (t^j-t^{j-1})/\log t=t^{j-1}(t-1)/\log t$ is 
an increasing bijection of  $]1,+\infty[$.
The inequality $x_j > x_{j+1}$ for $j \ge 2$ comes from the
increase of \dsm{j \mapsto (t^j-t^{j-1})/\log t} for
each $t > 1$.

Let us prove that $x_1 > x_2$. The definition \eqref{defxi} 
of $x_2$ implies 
\[
\frac{x_2^2-x_2}{\log x_2} = \rho > \frac{2}{\log 2} =
  \frac{2^2-2}{\log 2}\approx 2.89\;.
\]
With the increase of  $t \mapsto (t^2-t)/\log t$ 
this proves $x_2 > 2$. Thus $x_2^2-x_2 > x_2$, and
therefore
\[
\frac{x_2}{\log x_2} < \frac{x_2^2-x_2}{\log x_2} = \rho =
\frac{x_1}{\log x_1}
\]
which, with the increase of \dsm{t \mapsto t/\log t} on
$[\exp(1),+\infty[$ yields $x_2 > x_1$ and the decrease of $(x_n)$.
Finally $x_1/\log x_1 = \rho \ge 5/\log 5$ gives $x_1 \ge 5$.
\end{proof}

\begin{prop}
Let $\rho$ be a real number satisfying $\rho \ge 5/\log 5$, 
$N_\rho$ the smallest superchampion number
associated to $\rho$ and $N_\rho^+$ the largest superchampion number
associated to $\rho$ (cf. Lemma \ref{lemNro1}).
Then, with $x_j$ as introduced in Lemma \ref{defxj}, we have
\begin{equation}\label{Nxj}
N_{\rho}=\prod_{j\ge 1}\quad \prod_{x_{j+1}\le p < x_j} p^j 
\quad \text{ and } \quad
N_\rho^+=\prod_{j\ge 1} \quad\prod_{x_{j+1} <  p \le x_j} p^j. 
\end{equation}
\end{prop}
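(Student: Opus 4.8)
The plan is to compute the $p$-adic valuation of $N_\rho$ and of $N_\rho^+$ at every prime $p$ and to check that it agrees with the exponent prescribed by the right-hand sides of \eqref{Nxj}, and then to multiply. Since $\rho\ge 5/\log 5>2/\log 2$, we have $\rho\ne 2/\log 2$, so Lemma \ref{lemNro1} (its case~1 or its case~3) applies and $N_\rho$, $N_\rho^+$ are exactly the products \eqref{Nrho}, \eqref{Nrhop}. Reading those formulas off — and observing that, for a fixed prime $p$, the numbers $p/\log p\le(p^2-p)/\log p\le(p^3-p^2)/\log p\le\cdots$ form a non-decreasing sequence — one gets, for every prime $p$ and every integer $k\ge 1$, the compact restatement
\[
v_p(N_\rho)\ge k\iff \theta_k(p)<\rho,
\qquad
v_p(N_\rho^+)\ge k\iff \theta_k(p)\le\rho,
\]
where $\theta_1(t)=t/\log t$ and $\theta_k(t)=(t^k-t^{k-1})/\log t$ for $k\ge 2$.

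Next I would compare $\theta_k(p)$ with $\rho$ through the sequence $(x_j)$ of Lemma \ref{defxj}. By \eqref{defxi} one has $\rho=\theta_k(x_k)$ for each $k\ge 1$. For $k\ge 2$ the function $\theta_k(t)=t^{k-1}(t-1)/\log t$ is an increasing bijection of $]1,\iy[$ onto $]1,\iy[$ (precisely the monotonicity invoked in the proof of Lemma \ref{defxj}); since $p>1$ and $x_k>1$, this gives $\theta_k(p)<\rho\Leftrightarrow p<x_k$ and $\theta_k(p)\le\rho\Leftrightarrow p\le x_k$. For $k=1$ the map $t\mapsto t/\log t$ is increasing only on $[e,\iy[$, so I would treat $p=2$ apart: for $p\ge 3$ both $p$ and $x_1$ lie in $[e,\iy[$ (as $x_1\ge 5$ by \eqref{x14}), whence $\theta_1(p)<\rho\Leftrightarrow p<x_1$ and $\theta_1(p)\le\rho\Leftrightarrow p\le x_1$; while for $p=2$ one has $2/\log 2<5/\log 5\le\rho$ and $2<5\le x_1$, so both sides of each equivalence are true. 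Combining the two ranges of $k$ yields, for all primes $p$ and all $k\ge 1$,
\[
v_p(N_\rho)\ge k\iff p<x_k,
\qquad
v_p(N_\rho^+)\ge k\iff p\le x_k.
\]

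To conclude I would translate these valuation identities into products. Using that $v_p(N_\rho)$ is a finite non-negative integer, the first equivalence (applied at $k=v_p(N_\rho)$ and $k=v_p(N_\rho)+1$, with the convention $x_0=+\iy$) shows that $v_p(N_\rho)$ is the unique $j\ge 0$ with $x_{j+1}\le p<x_j$; likewise $v_p(N_\rho^+)$ is the unique $j$ with $x_{j+1}<p\le x_j$. As $(x_j)_{j\ge1}$ is strictly decreasing (Lemma \ref{defxj}) and only finitely many primes are $<x_1$, grouping the factors of $N_\rho=\prod_p p^{v_p(N_\rho)}$ and of $N_\rho^+=\prod_p p^{v_p(N_\rho^+)}$ according to the value of the exponent produces exactly the two products in \eqref{Nxj}, the exponent-$0$ primes contributing a factor $1$.

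I do not expect a genuine obstacle: the only delicate points are (i) checking that the case distinctions of \eqref{Nrho}–\eqref{Nrhop} really collapse into the monotone conditions ``$\theta_k(p)<\rho$'' and ``$\theta_k(p)\le\rho$'' — in particular noticing the coincidence $\theta_1(2)=\theta_2(2)=2/\log 2$, which explains why $v_2(N_\rho)$ can never equal $1$ under our hypothesis — and (ii) handling $p=2$ separately when $k=1$, since $t\mapsto t/\log t$ fails to be monotone on all of $]1,\iy[$. Everything else is bookkeeping.
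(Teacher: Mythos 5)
Your proof is correct and follows essentially the same route as the paper: both arguments read the exponents off \eqref{Nrho}--\eqref{Nrhop}, translate the threshold inequalities into $p<x_j$ (resp.\ $p\le x_j$) via the monotonicity of $t\mapsto t/\log t$ on $[e,\infty)$ and of $t\mapsto (t^j-t^{j-1})/\log t$ on $(1,\infty)$ together with $\rho=\theta_j(x_j)$, and both isolate $p=2$ in the $k=1$ case. Your repackaging as the one-sided equivalences $v_p\ge k\iff p<x_k$ is only a cosmetic variant of the paper's interval conditions $x_{j+1}\le p<x_j$.
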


\begin{proof}
Due to \eqref{Nrho}, $\al_p = 1$ holds if and only we have
\begin{equation}\label{exp1}
\dfrac{p}{\log p} < \rho \le \dfrac{p^{2}-p}{\log p},
\end{equation}
and by the definition \eqref{defxi} of $x_1$ and $x_2$, this is 
equivalent to
\[
\dfrac{p}{\log p} < \frac{x_1}{\log x_1}
\qtx{and}
\dfrac{x_{2}^{2}-x_2}{\log x_{2}} 
\le \dfrac{p^{2}-p}{\log p}\cdot
\]
By the increase of $t \mapsto t/\log t$ on
$[\exp(1),+\infty[$ and $t \mapsto (t^2-t)/\log t$ on
$[1,+\infty[$,
this proves that for $p \ge \exp(1)$, $\al_p = 1$ holds if and only
if $x_2 \le p  < x_1$. 
It remains to prove that, when $p=2$, this equivalence is still
true. In this case, $2/\log 2 = (4-2)/\log 2$, and \eqref{exp1}
is never satisfied. By \eqref{x14} we have $x_2 > 2$,
and $x_2 \le 2 < x_1$ is false.
Thus, for every prime $p$, we have 
$\al_p = 1$ if and only if $x_{2} \le p < x_1$.

For $i \ge 2$, $\al_p = i$ if and only if
\dsm{
\dfrac{p^i-p^{i-1}}{\log p} < \rho \le \dfrac{p^{i+1}-p^i}{\log p},
}
and, by the definition \eqref{defxi} of $x_i$ and $x_{i+1}$
this is equivalent to
\[
\dfrac{p^i-p^{i-1}}{\log p} < 
\dfrac{x_i^i-x_{i}^{i-1}}{\log x_i} 
\qtx{and}
\dfrac{x_{i+1}^{i+1}-x_{i+1}^{i}}{\log x_{i+1}} 
\le \dfrac{p^{i+1}-p^i}{\log p}
\]
or $x_{i+1} \le p < x_i$. 
This proves the first equality \eqref{Nxj}. The second one can be proved 
by the same way.
\end{proof}

%--------------------------------------------------------------------------
\section{First step of the computation of ${g(n)}$:  
getting $\rho, N, N'$.}\label{compsup}
%such that $\ell(N_\rho) \le  n < \ell(N_\rho^+)$}
%--------------------------------------------------------------------------

%--------------------------------------------------------------------------
\subsection{Fixing our notation}
%-------------------------------------------------------------------------

\begin{figure}
\[
\begin{array}{|r||r|r|r|r|}
\hline
i  & T[i].q     &  T[i].j  & T[i].p    & T[i].\ell \\[0.5mm]
\hline
1  &2     &  2  &   3  &    7 \\
2  &3     &  2  &  13  &   49 \\
3  &2     &  3  &  13  &   53 \\
4  &2     &  4  &  43  &  301 \\
5  &5     &  2  &  47  &  368 \\
6  &3     &  3  &  67  &  626 \\
7  &7     &  2  &  97  & 1160 \\
8  &2     &  5  & 107  & 1487 \\
9  &11    &  2  & 251  & 6307 \\
10 &2     &  6  & 251  & 6339 \\
11 &3     &  4  & 271  & 7453 \\
\hline
\end{array}
\]
\caption{The first elements of table $T$ associated to $E_2$.}
\label{firstchampions2}
\end{figure}

When $\rho = 5/\log 5$ we have $N_\rho = 12$ and $\ell(N_\rho) = 7$
(see {\sc Fig.} 2). 

\begin{definition}\label{fixN}
From now on, $n \ge 7$ will be a fixed integer, and our purpose is
to compute $g(n)$. We will denote by $\rho$ the unique real number $\rho\in\ce$
such that $\rho \ge 5/\log 5$ and 
\begin{equation}\label{NN'}
\ell(N_\rho) \le  n < \ell(N_\rho^+).
\end{equation}
We will also fix the following notation.
\begin{enumerate}
\item
$N = N_\rho$, $\ N' = N_\rho^+$
and \dsm{N = \prod_{p} p^{\al_p}} is the standard factorization of $N$.
\item
We define $x_1 = x_1(\rho)\ge 5$ and $x_2 = x_2(\rho) > 2$ by \eqref{defxi}.
\item
Let $p_k$ be the largest prime factor of $N=N_\rho$.
It follows from \eqref{Nxj} that
\begin{equation}\label{pkx1}
p_k<x_1\le p_{k+1}
\end{equation}
and, actually, $x_1=p_{k+1}$ unless $\rho\in \ce\sg$ (in this case 
$p_k < x_1 <  p_{k+1}$).
\item
Let us define $B_1$  by
\begin{equation}\label{B1}
B_1 = \min\left( x_2^2-2x_2,\frac{x_1}{2}-\sqrt{x_1}\right) > 0.
\end{equation}
\end{enumerate}
\end{definition}

We have
\begin{equation}\label{x1rx2}
2 < x_2 < \sqrt{x_1} < \rho  < x_1.
\end{equation}
Let us prove \eqref{x1rx2}.
Inequalities \eqref{x14} give $2 < x_2$.
With Lemma \ref{lemx2x1}, Point 2., it yields $x_2 < \sqrt{x_1}$.
Since for all $t > 1$, $\sqrt t/\log t > e/2 > 1$ we have
$\sqrt x_1/\log x_1 > 1$ and thus $\rho = x_1/\log x_1 > \sqrt x_1$.

%--------------------------------------------------------------------------
\subsection{The superchampion algorithm}
%-------------------------------------------------------------------------

Given $n$, as already said, the first step in our computation of $g(n)$  is to
calculate $\rho,N,N',x_1,x_2,p_k,B_1$ as introduced in Definition \ref{fixN}.

We begin by precomputing in increasing order the first elements
of $\ce\sg$ and stop when we get the first $r \in \ce\sg$
such that $\ell(N_r^+) > 10^{15}$. We get a set $E_2$
with $1360$ elements,
\[
E_2 = 
\set{\frac{2^2-2}{\log 2},\frac{3^2-3}{\log 3},\frac{2^3-2^2}{\log2}
  ,\cdots}.
\]
 
We construct a table $T$, indexed from $1$ to ${\rm card}(E_2) = 1360$.
Let $r = (q^{j+1}-q^j)/\log q$ the  $i^{th}$ element
of $E_2$. Then  $T[i] = [q,j,p,l]$
where \dsm{ l = \ell(N_r^+)}
and $p$ is the largest prime $p$ such that $p/\log p < r$.
The superchampions following $N_r^+$ are obtained 
by multiplying it successively by the primes following $p$.
Figure \ref{firstchampions2}
 gives the first values of $T[i]$.
(In the {\sc Maple} program the $T[i]$'s are the elements
of the table {\tt listesuperchE2}).

The superchampions that are not of the form $N_r^+$ for an $r \in E_2$
can easily be obtained from this table.
For instance, the successive values of $\ell(N)$ between $368$ and $626$ are
$368+53=421$, $421+59=480$, $480+61=541$ and $541+67=608$.

Two  elements of $\ce$ can be close.
For instance, the smallest difference between two consecutive elements of
$\ce$ less than $8\cdot 10^9$ is 
\begin{multline*}
\frac{43083996283}{\log 43083996283} - 
\frac{144589^2-144589}{\log 144589}\\
= 1759505912.7146899772-1759505912.7146800938=0.0000098834
\end{multline*} 
and thus, working with $20$ decimal digits is enough to distinguish
the elements of $\ce$.  For any $n$ up to $10^{15}$, Algorithm 2 below
determines the superchampion $N = N_\rho$ as defined in Defintion
\ref{fixN}.

\renewcommand{\algorithmicendwhile}{\textbf{end while}}
\begin{algorithm}
\caption{: computes $N= N_{\rho}$ for a given $n \le 10^{15}$.}
\label{algsuperchamp}
\begin{algorithmic}
\STATE Construct table $T$.
\STATE $i:= $ the largest index such that $T[i].\ell \le n$.
\STATE $\ell':= T[i+1].\ell$,\ $q' = T[i+1].q$,\ $j' = T[i+1].j$.\\
\COMMENT{$r'= ({q'}^{j'}-{q'}^{(j'-1)})/\log q'$ is the smallest element
 in $E_2$ such that $\ell(N_{r'}) > n$}
\STATE $t:= \ell'-q'^{(j'-1)}(q'-1);$ \\
\COMMENT{This is the value $\ell(N)$
 of the superchampion $N$ preceding $N_r^+$}
\IF{$t \le n$}
\STATE $\rho:= r'$
\ELSE
\STATE $n_0$:= $T[i].\ell + nextprime(T[i].p)$;
\WHILE{$n_0 \le n$}
\STATE $p:= nextprime(p);\ n_0:= n_0 + p$
\ENDWHILE
\STATE $\rho:= p/\log p$
\ENDIF\\
\end{algorithmic}
\end{algorithm}

%--------------------------------------------------------------------------
\section{Benefits}\label{benefit}
%--------------------------------------------------------------------------

%--------------------------------------------------------------------------
\subsection{Definition and properties}
%--------------------------------------------------------------------------

\begin{definition}
Let $\rho\in \ce$ and
$N=N_\rho$ (as defined in Definition \ref{fixN}).
If $M$ is a positive integer, from \eqref{super}, we have
$\ell(M)-\rho\log M\ge \ell(N)-\rho\log N$.
We call \emph{benefit} of $M$ the non-negative quantity
\begin{equation}\label{ben}
\ben(M)=\ell(M)-\ell(N)-\rho\log\frac MN \cdot
\end{equation}
Let $M=\prod_p p^{\be_p}$ be the standard factorization of $M$.
We define
\begin{equation}\label{benp}
\benp(M) = \ell(p^{\be_p})-\ell(p^{\al_p})-\rho(\be_p-\al_p)
\log p \ge 0,
\end{equation}
which implies
\begin{equation}\label{benpd}
\ben(M)=\sum_p \benp(M).
\end{equation}
\end{definition}

Geometrically, if we represent $\log M$ in abscissa and $\ell(M)$ in ordinate,
the straight line of slope $\rho$ going through the point $(\log M,\ell(M))$ 
cuts the $y$ axis at the ordinate $y_M=\ell(M)-\rho\log(M)$ and so, the  
benefit is the difference $y_M-y_N$ (see {\sc Fig.} \ref{figurebenefice}).
Note that $\rho=\dfrac{\ell(N')-\ell(N)}{\log N'-\log N}$ with $N=N_\rho$
and $N'=N_\rho^+$.

\begin{figure}
\hspace{-1cm}
\includegraphics[height=7cm]{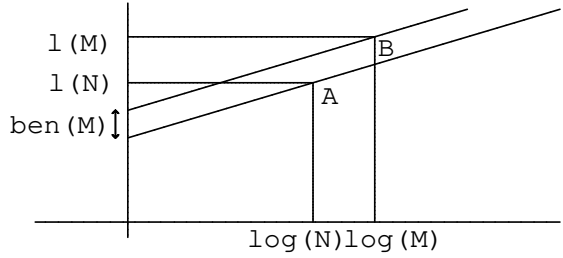}
\caption{$A = (\log N,\ell(N))$ and $B = (\log M,\ell(M))$.}
\label{figurebenefice}
\end{figure}

\begin{lem}\label{bencr}
Let $p\in\cp$, $\al=\al_p=v_p(N)$ and $\ga$ a non-negative integer. Then,
\begin{enumerate}
\item
$\ben(p^\ga N)=\ell(p^{\ga+\al})-\ell(p^\al)-\rho \ga\log p$
is non-decreasing for $\ga\ge 0$ and tends to infinity with $\ga$.
\item
$\ben(N/p^\ga)=\rho \ga\log p+\ell(p^{\al-\ga})-\ell(p^\al)$
is non-decreasing for $0\le \ga\le \al$.
\end{enumerate} 
\end{lem}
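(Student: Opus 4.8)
The statement concerns the two expressions $\ben(p^\ga N)$ and $\ben(N/p^\ga)$, so the natural first move is to check that the formulas given in 1. and 2. are exactly what the definition of $\benp$ and \eqref{benpd} produce. For Point 1., the integer $M = p^\ga N$ differs from $N$ only in the exponent of $p$, which goes from $\al$ to $\ga+\al$; by \eqref{benpd} and \eqref{benp} we immediately get $\ben(p^\ga N)=\benp(p^\ga N)=\ell(p^{\ga+\al})-\ell(p^\al)-\rho\ga\log p$. Likewise for Point 2., $M=N/p^\ga$ (which requires $0\le\ga\le\al$ for $M$ to be an integer) gives $\ben(N/p^\ga)=\ell(p^{\al-\ga})-\ell(p^\al)+\rho\ga\log p$. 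So the real content is the monotonicity assertions, and the limit assertion in Point 1.

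For the monotonicity in Point 1., I would look at the forward difference in $\ga$:
\[
\ben(p^{\ga+1}N)-\ben(p^\ga N)=\ell(p^{\ga+\al+1})-\ell(p^{\ga+\al})-\rho\log p.
\]
Since $\ell(p^i)=p^i$ for $i\ge 1$ and $\ell(p^0)=0$, this difference equals $F(p^{\ga+\al+1})-F(p^{\ga+\al})$ in the notation of the proof of Lemma \ref{lemNro1}, i.e. it is $p-\rho\log p$ if $\ga+\al=0$ and $p^{\ga+\al}(p-1)-\rho\log p$ if $\ga+\al\ge 1$. By \eqref{fpi} this is a non-decreasing function of $\ga+\al$ that tends to $+\infty$. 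The key observation is that $N=N_\rho$ is a superchampion associated to $\rho$, so by the analysis in Lemma \ref{lemNro1} the exponent $\al=\al_p=v_p(N)$ is chosen so that $F(p^{\al+1})-F(p^\al)\ge 0$ (with equality only in the borderline case $\rho\in\ce_p$); hence every term $F(p^{\ga+\al+1})-F(p^{\ga+\al})$ with $\ga\ge 0$ is $\ge 0$, which gives monotonicity, and since the terms grow without bound the partial sums $\ben(p^\ga N)$ tend to infinity. One must handle the case $\al=0$ (then $p\nmid N$, $F(p)-F(1)=p-\rho\log p>0$ by the superchampion property, and all subsequent differences are even larger), but this is covered by the same \eqref{fpi} bookkeeping.

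For Point 2., I would similarly compute, for $0\le\ga\le\al-1$,
\[
\ben(N/p^{\ga+1})-\ben(N/p^\ga)=\rho\log p+\ell(p^{\al-\ga-1})-\ell(p^{\al-\ga}) = -\bigl(F(p^{\al-\ga})-F(p^{\al-\ga-1})\bigr),
\]
so monotonic increase of $\ga\mapsto\ben(N/p^\ga)$ is equivalent to $F(p^{i})-F(p^{i-1})\le 0$ for all $i$ with $1\le i\le\al$. Again this is precisely the defining property of the superchampion exponent $\al$: by Lemma \ref{lemNro1}, $\al$ is the largest $i$ with $F(p^i)-F(p^{i-1})\le 0$, and since by \eqref{fpi} these differences are non-decreasing in $i$, they are all $\le 0$ for $i\le\al$. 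I would also note non-negativity of $\ben(N/p^\ga)$ follows from $\ben(N/p^0)=0$ together with the increase just proved (or directly from \eqref{super}). The only genuine obstacle is making sure the two edge cases — $\al=0$ in Point 1., and the possible equality $F(p^{\al+1})=F(p^\al)$ or $F(p^\al)=F(p^{\al-1})$ when $\rho\in\ce_p$ — are dispatched cleanly; both are harmless because \eqref{fpi} already tells us the sign pattern of the differences, and "non-decreasing"/"non-increasing" (rather than strict) is exactly what the statement claims.
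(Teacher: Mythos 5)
Your proof is correct and follows essentially the same route as the paper: both compute the forward difference in $\ga$ and conclude non-negativity from the characterization \eqref{Nrho} of the exponent $\al_p$ together with the monotonicity of $F(p^{i+1})-F(p^i)$ in $i$ (your detour through the function $F$ of Lemma \ref{lemNro1} is just a repackaging of the paper's direct comparison of $(p^{\al+1}-p^\al)/\log p$ with $\rho$). The only nitpick is that in the case $\al=0$ you assert $F(p)=p-\rho\log p>0$ strictly, whereas \eqref{Nrho} only guarantees $p/\log p\ge\rho$, i.e.\ $F(p)\ge 0$; this does not affect the conclusion since only ``non-decreasing'' is claimed.
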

\begin{proof}
\begin{enumerate}
\item
If $\ga+\al\ge 1$, we have
\[
\ben(p^{\ga+1}N)-\ben(p^{\ga}N)=
\log p\left(p^\ga \frac{p^{\al+1}-p^{\al}}{\log p}-\rho\right)
\]
which is non-negative from \eqref{Nrho} and tends to
infinity with $\ga$.

If $\al=\ga=0$, we have 
$\ben(pN)-\ben(N)=\log p(p/\log p-\rho)$ which is also non-negative from 
\eqref{Nrho}.
\item
If $\al\ge 2$ and $0\le \ga \le \al-2$, we have
\[
\ben\left(\frac{N}{p^{\ga+1}}\right)-\ben\left(\frac{N}{p^{\ga}}\right)=
\log p\left(\rho-\frac{1}{p^\ga} 
\frac{p^{\al}-p^{\al-1}}{\log p}\right)
\]
which is non-negative from \eqref{Nrho}.

If $\al\ge 1$ and  $\ga=\al-1$,
$$\dsm{\ben\left(\frac{N}{p^{\ga+1}}\right)-
\ben\left(\frac{N}{p^{\ga}}\right)=\log p\left(\rho- \frac{p}{\log
  p}\right)
}$$
yields the same conclusion.
\end{enumerate}
\end{proof}

\begin{lem}
Let $U/V$ be an irreducible fraction such that $V$ divides $N$ (as fixed 
in Definition \ref{fixN}) 
and $U = U_1 U_2$, $V=V_1 V_2$ with $(U_1,U_2) = (V_1,V_2) = 1$.
Then we have
\begin{enumerate}
\item
\begin{equation}\label{ladd}
\ell\left(\frac{UN}{V}\right)-\ell(N)=\ell\left(\frac{U_1N}{V_1}\right)-\ell(N)
+\ell\left(\frac{U_2N}{V_2}\right)-\ell(N).
\end{equation}
\item
\begin{equation}\label{benfrac}
\ben\left(\frac{UN}{V}\right) = \ben\left(\frac{U_1N}{V_1}\right)+\ben\left(
  \frac{U_2N}{V_2}\right)\cdot 
\end{equation}
\end{enumerate}
\end{lem}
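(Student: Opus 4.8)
The plan is to reduce both assertions to the additivity of $\ell$ and $\log$, which are already recorded in the excerpt (in the proof of Lemma \ref{lemNro1}, the functions $\log$ and $\ell$ are noted to be additive). First I would observe that since $U/V$ is irreducible, $V\mid N$, and $U=U_1U_2$, $V=V_1V_2$ with $(U_1,U_2)=(V_1,V_2)=1$, the four integers $U_1,U_2,V_1,V_2$ are built from disjoint sets of primes: $\gcd(U,V)=1$ forces $(U_i,V_j)=1$ for all $i,j$, and the stated coprimalities give $(U_1,U_2)=(V_1,V_2)=1$. Hence the primes occurring in $U_1N/V_1$ and those in which $U_2N/V_2$ differs from $N$ are disjoint modifications of $N$, and the product $(U_1N/V_1)(U_2N/V_2)/N$ equals $UN/V$ with the two ``correction factors'' $U_1/V_1$ and $U_2/V_2$ supported on disjoint prime sets.

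For part 1, I would extend $\ell$ to positive rationals by $\ell(A/B)=\ell(A)-\ell(B)$ for $(A,B)=1$, as the paper does in its Notation subsection, and use the prime-by-prime description. Writing $N=\prod_p p^{\al_p}$, the rational $UN/V$ has $p$-adic valuation $\al_p+v_p(U)-v_p(V)$, and since no prime divides both $U$ and $V$, and the modification at $p$ comes either from $U_1/V_1$ or from $U_2/V_2$ but not both, one gets termwise
\[
\ell\!\left(\frac{UN}{V}\right)-\ell(N)
=\sum_p\Bigl(\ell(p^{\al_p+v_p(U)-v_p(V)})-\ell(p^{\al_p})\Bigr)
\]
and the sum splits according to whether $p$ divides $U_1V_1$ or $U_2V_2$, giving exactly \eqref{ladd}. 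The only mild care needed is that $V\mid N$ guarantees all exponents $\al_p-v_p(V)$ are $\ge 0$, so that $\ell$ of the powers involved is the honest value $\ell(p^j)=p^j$ and no cancellation subtleties arise; the $U$ side only increases exponents, so it is unproblematic.

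For part 2, I would simply feed \eqref{ladd} into the definition \eqref{ben} of the benefit. We have $\ben(M)=\ell(M)-\ell(N)-\rho\log(M/N)$, and $\log$ is additive in the same sense: $\log\bigl((U_1N/V_1)(U_2N/V_2)/N\bigr)=\log(U_1N/V_1)+\log(U_2N/V_2)-2\log N$, i.e. $\log\frac{UN/V}{N}=\log\frac{U_1N/V_1}{N}+\log\frac{U_2N/V_2}{N}$. Combining this with \eqref{ladd} yields \eqref{benfrac} immediately. Alternatively one can invoke \eqref{benpd}, $\ben(M)=\sum_p\benp(M)$, and partition the primes into those dividing $U_1V_1$ and those dividing $U_2V_2$ (all other primes contributing $0$ to each of the three benefits). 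I do not expect any real obstacle here: the entire content is bookkeeping with disjoint supports, and the only point worth stating explicitly is why the four factors have pairwise disjoint prime supports, which is forced by the hypotheses $(U,V)=1$, $(U_1,U_2)=1$, $(V_1,V_2)=1$.
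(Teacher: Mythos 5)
Your proposal is correct and follows essentially the same route as the paper: the paper's proof simply observes that a prime divides at most one of $U_1,U_2,V_1,V_2$ (which yields \eqref{ladd}) and then invokes the additivity of the logarithm to get \eqref{benfrac}. Your version spells out the same disjoint-support argument prime by prime and is, if anything, more detailed than the original.
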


\begin{proof}
Observing that a prime $p$ divides at most one of the four
numbers $U_1,U_2,V_1,V_2$ we get \eqref{ladd}.
By the additivity of the logarithm, \eqref{benfrac} follows.
\end{proof}

%--------------------------------------------------------------------------
%\subsection{Bounding the benefits}\label{bobe}
%--------------------------------------------------------------------------
\ni
The following proposition will be useful in the sequel.

\begin{prop}\label{propmajben}
Let $M$ be a positive integer such that $\ell(M)\le n$ (thus, 
from \eqref{pcarg}, $M\le g(n)$ holds). Then,
\[
\ben g(n)\le \ben M+\ell(g(n))-\ell(M)
\]
and
\begin{equation}\label{majben}
\ben g(n)\le \ben g(n)+n-\ell(g(n))\le \ben M+n-\ell(M).
\end{equation}
\end{prop}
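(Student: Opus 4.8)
The plan is to establish the first inequality of Proposition~\ref{propmajben} directly from the definition of benefit and the minimality property of superchampion numbers, and then to deduce~\eqref{majben} by a short manipulation together with~\eqref{lgn}. Write $N=N_\rho$. Since $\ell(M)\le n$, inequality~\eqref{pcarg} gives $M\le g(n)$, so $\log(g(n)/M)\ge 0$. Now apply the definition~\eqref{ben} of benefit to both $g(n)$ and $M$:
\[
\ben g(n)-\ben M=\ell(g(n))-\ell(M)-\rho\log\frac{g(n)}{M}.
\]
Because $g(n)\ge M$ and $\rho>0$, the term $-\rho\log(g(n)/M)$ is nonpositive, whence $\ben g(n)-\ben M\le \ell(g(n))-\ell(M)$, which is exactly the first displayed inequality of the proposition.

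For~\eqref{majben}, recall from~\eqref{lgn} that $\ell(g(n))\le n$, so $n-\ell(g(n))\ge 0$ and therefore $\ben g(n)\le \ben g(n)+n-\ell(g(n))$, giving the left inequality. For the right inequality, add $n-\ell(g(n))$ to both sides of the first inequality of the proposition: this yields
\[
\ben g(n)+n-\ell(g(n))\le \ben M+\ell(g(n))-\ell(M)+n-\ell(g(n))=\ben M+n-\ell(M),
\]
as required.

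There is really no serious obstacle here: the only thing to be careful about is the sign of $\log(g(n)/M)$, which rests on the implication $\ell(M)\le n\Rightarrow M\le g(n)$ recorded in~\eqref{pcarg}, together with $\rho>0$ from the definition of a superchampion. Everything else is just substituting the definition~\eqref{ben} and using the already-established bound~\eqref{lgn}. I would present the argument in exactly two short steps — first the benefit comparison, then the arithmetic rearrangement using $\ell(g(n))\le n$ — without any case analysis.
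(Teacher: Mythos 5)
Your proof is correct and follows essentially the same route as the paper: expand $\ben g(n)-\ben M$ via definition~\eqref{ben}, drop the nonpositive term $-\rho\log(g(n)/M)$ (using $M\le g(n)$ from~\eqref{pcarg} and $\rho>0$), and then append $n-\ell(g(n))\ge 0$ from~\eqref{lgn}. The paper's own proof is just a more compressed statement of exactly these two steps.
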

 
\begin{proof}
From \eqref{ben}, we have
%\vspace*{-3ex}
\[
\ben g(n)-\ben M = \ell(g(n))-\ell(M)-\rho\log\frac{g(n)}{M}
\le \ell(g(n))-\ell(M)
\]
which implies the first inequality while the second one follows 
from \eqref{lgn}.
\end{proof}

We shall use Proposition \ref{propmajben} to determine an upper bound $B$ 
such that
\begin{equation}\label{benmaxg}
\ben g(n)\le \ben g(n)+n-\ell(g(n))\le B.
\end{equation}

It has been proved in \cite{MNR2} that $B \le x_1$ and 
\begin{equation}\label{benO}
B=\co\left(\frac{x_1}{\log x_1}\right)=\co(\rho),
\end{equation}
and, by the method of \cite{Zak}, it is possible to show that 
$B=o(\rho)$.
The largest quotient $(\ben g(n)+n-\ell(g(n)))/\rho$ that we have 
found up to $n=10^{12}$ is $1.60153$ for $n=45055780$.

%--------------------------------------------------------------------------
\subsection{The benefit of large primes}\label{benlp}
%--------------------------------------------------------------------------
\begin{prop}\label{proplp}
Let $N,B_1,x_1$ and $x_2$ as in Definition \ref{fixN}.
If $M$ is an integer satisfying 
$\ben(M)=\ell(M)-\ell(N)-\rho\log(M/N) < B_1$,
we have
\[
\begin{array}{lllll}
1. &\quad\text{if}& \sqrt{x_1} \le p
&\quad\text{then}\quad&
v_p(M) \le 1\\[1ex]
2. &\quad\text{if}& x_2 \le p < \sqrt{x_1}
&\quad\text{then}\quad&
v_p(M)\le 2.
\end{array}
\]
\end{prop}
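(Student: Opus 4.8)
The idea is to bound the $p$-part of the benefit of $M$ from below and use $\ben_p(M)\le\ben(M)<B_1$. Write $\be=v_p(M)$ and $\al=\al_p=v_p(N)$, so by \eqref{benp} we have $\ben_p(M)=\ell(p^\be)-\ell(p^\al)-\rho(\be-\al)\log p$. By Lemma \ref{bencr}, as a function of $\be$ this quantity is non-decreasing for $\be\ge\al$ (point 1. applied with $\ga=\be-\al$) and non-decreasing as $\al-\be$ grows for $\be\le\al$ (point 2.); hence it suffices to check the first ``forbidden'' value of $\be$ in each regime. For point 1 of the proposition we are in the range $\sqrt{x_1}\le p$; here \eqref{Nxj} (or \eqref{pkx1} together with $x_2<\sqrt{x_1}<x_1$ from \eqref{x1rx2}) gives $\al_p\le 1$, since a prime exceeding $x_2$ has exponent at most $1$ in $N$. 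So the smallest forbidden exponent is $\be=2$, and I must show $\ell(p^2)-\ell(p^{\al})-\rho(2-\al)\log p\ge B_1$ for $\al\in\{0,1\}$.

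First I would dispose of point 1. If $\al=1$, then $\ben_p(p^2 N/p)=\ell(p^2)-\ell(p)-\rho\log p=p^2-p-\rho\log p$. Since $p\ge\sqrt{x_1}$ and $t\mapsto(t^2-t)/\log t$ is increasing for $t>1$ (Lemma \ref{lemx2x1}-style monotonicity, already used in the excerpt), we get $p^2-p\ge(\sqrt{x_1}-1)\cdot\frac{\log p}{?}$—more cleanly: $p^2-p-\rho\log p=\log p\bigl(\frac{p^2-p}{\log p}-\rho\bigr)$ and $\frac{p^2-p}{\log p}\ge\frac{x_1-\sqrt{x_1}}{\frac12\log x_1}=2\cdot\frac{x_1-\sqrt{x_1}}{\log x_1}$, which is comfortably larger than $\rho=x_1/\log x_1$; a short computation then yields $p^2-p-\rho\log p\ge\frac{x_1}{2}-\sqrt{x_1}\ge B_1$, using $\log p\ge\frac12\log x_1$ and $\rho=x_1/\log x_1$. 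The case $\al=0$ (i.e. $p\ge x_1$, so $p\nmid N$) is even easier: $\ben_p(p^2N)\ge\ben_p(pN)+\bigl(\text{increment}\bigr)$, and directly $\ell(p^2)-\rho\cdot 2\log p=p^2-2\rho\log p\ge\frac{x_1}{2}-\sqrt{x_1}$ for $p\ge x_1\ge 5$, again since $\rho=x_1/\log x_1$ and $p^2\ge x_1 p\ge x_1\log p\cdot(\text{something}>2)$. So in both sub-cases $\ben(M)\ge\ben_p(M)\ge B_1$, contradiction; hence $v_p(M)\le1$.

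Next, point 2, the range $x_2\le p<\sqrt{x_1}$. Here again a prime in this range has $\al_p\in\{0,1\}$ by \eqref{Nxj} (it is $\ge x_2$, so its exponent in $N$ is at most $1$; it could be $0$ if $p$ happens to lie in a gap, but generically $\al_p=1$ when $x_2\le p<x_1$). The smallest forbidden exponent is now $\be=3$, and I must show $\ell(p^3)-\ell(p^\al)-\rho(3-\al)\log p\ge B_1$. For $\al=1$: $\ell(p^3)-\ell(p)-2\rho\log p=p^3-p-2\rho\log p=\log p\bigl(\frac{p^3-p}{\log p}-2\rho\bigr)$; since $p\ge x_2>2$ we have $\frac{p^3-p}{\log p}\ge\frac{x_2^3-x_2}{\log x_2}$ and I would compare this with $2\rho$. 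The cleanest bound to land on $B_1$: $p^3-p-2\rho\log p\ge p(p^2-1)-2\rho\log p$; using $p\ge x_2$ and the defining relation for $x_2$ will give something $\ge x_2^2-2x_2\ge B_1$ (recall $B_1=\min(x_2^2-2x_2,\ x_1/2-\sqrt{x_1})$). The case $\al=0$ (p in a gap below $x_1$, $p\nmid N$): $\ell(p^3)-3\rho\log p=p^3-3\rho\log p$, which is even larger, and one checks $\ge x_2^2-2x_2$. Thus $\ben(M)\ge\ben_p(M)\ge B_1$ in all sub-cases, contradiction, so $v_p(M)\le2$.

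\textbf{Where the difficulty lies.} The structure is entirely routine once Lemma \ref{bencr} (monotonicity of $\ben_p$) is invoked—the only real work is in the inequalities $p^2-p-\rho\log p\ge\frac{x_1}{2}-\sqrt{x_1}$ and $p^3-p-2\rho\log p\ge x_2^2-2x_2$, which must be made to come out at the endpoints $p=\sqrt{x_1}$ and $p=x_2$ respectively. The slightly delicate points are: (i) pinning down $\al_p$ correctly, including the edge case where $p$ falls in a gap of the exponent ladder so that $\al_p$ drops (this only makes $\ben_p$ larger, so it is harmless, but should be stated); (ii) tracking the constant carefully—the reason $B_1$ is defined as that particular minimum of two terms is precisely so that the $\sqrt{x_1}$-endpoint computation feeds the first term and the $x_2$-endpoint computation feeds the second. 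I expect the $\al=1$, $p=x_2$ estimate for point 2 to be the fussiest, because there one genuinely uses the definition \eqref{defxi} of $x_2$ (via $\frac{x_2^2-x_2}{\log x_2}=\rho$) rather than a crude monotonicity bound, in order to extract the clean quantity $x_2^2-2x_2$.
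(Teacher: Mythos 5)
Your proof is correct and follows essentially the same route as the paper: reduce to the first forbidden exponent via the monotonicity of $\benp$, lower-bound it by a function of $p$ increasing on the relevant range, and evaluate at the endpoints $p=\sqrt{x_1}$ and $p=x_2$ using $\rho=x_1/\log x_1=(x_2^2-x_2)/\log x_2$ to recover exactly the two terms of $B_1$. (One cosmetic remark: for $x_2\le p<\sqrt{x_1}$, formula \eqref{Nxj} gives $\al_p=1$ exactly, so the ``gap'' case $\al_p=0$ you guard against in point 2 does not occur; as you observe, it would in any case only increase $\benp(M)$, so this is harmless.)
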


\begin{proof}
\mbox{}

\begin{enumerate}
\item
Let us assume that the prime $p$ satisfies $p\ge \sqrt x_1$ and
divides $M$ with exponent $k\ge 2$. 
With \eqref{x1rx2}, we have $p > x_2$ and, 
from \eqref{Nxj}, the exponent
$\al_p$ of $p$ in $N=N_\rho$ is $0$ or $1$. If $\al_p=1$, from
\eqref{benp} and \eqref{Nrho} we have
\begin{eqnarray}\label{benpg1}
\benp(M) &=&  p^k-p-\rho(k-1)\log p = \log p\sum_{i=2}^k 
\Big(\frac{p^i-p^{i-1}}{\log p}-\rho\Big)\notag\\
&\ge& \log p \Big(\frac{p^2-p}{\log p}-\rho\Big)=p^2-p-\rho\log p
\end{eqnarray}
while, if $\al_p=0$,
\begin{eqnarray*}
\benp M &=&  p^k-\rho\, k\log p 
= \log p\, \bigg(\frac{p}{\log p}-\rho + 
\sum_{i=2}^k  \Big(\frac{p^i-p^{i-1}}{\log p}-\rho\Big)\bigg)\\
&\ge& \log p\, \Big(\frac{p^2-p}{\log p}-\rho\Big)=
p^2-p-\rho\log p.
\end{eqnarray*}
So, in both cases, \eqref{benpd} and \eqref{benp} yield $\;\ben M\,
\ge\, \benp M \, \ge\, f(p)\;$ with\\ $f(t)=t^2-t-\rho\log t$.  We
have $f'(t)=2t-1-\rho/t$, $f''(t) > 0$ and, as $x_2 > 2$ holds,
\eqref{defxi} implies
\[
f'(x_2)=2x_2-1-\frac{x_2-1}{\log x_2}\ge 
x_2\left(2-\frac{1}{\log x_2}\right)-1\ge 2\left(2-\frac{1}{\log 2}\right)-1 
> 0
\]
and $f(t)$ is increasing for $t\ge x_2$. Thus, since $p\ge \sqrt{x_1}$,
\[
\ben M \ge f(p)\ge f(\sqrt{x_1})=
x_1-\sqrt{x_1}-\frac{x_1}{\log x_1}\log\sqrt{x_1}
=\frac{x_1}{2}-\sqrt{x_1} \ge B_1
\]
in contradiction with our hypothesis, and 1. is proved.
\item
Let $p$ satisfy $2 < x_2\le p < \sqrt{x_1}$ 
so that, from \eqref{Nxj}, $\al_p=v_p(N)=1$; let us 
assume that $k=v_p(M)\ge 3$;
one would have as in \eqref{benpg1}
\[
\ben M\ge \log p\sum_{i=2}^k 
\left( \frac{p^i-p^{i-1}}{\log p}-\rho\right) \ge p^3-p^2-\rho\log p.
\]
The function $f(t)= t^3-t^2-\rho\log t$ is easily shown to be 
increasing for $t\ge x_2$. From \eqref{defxi},
$f(x_2)= x_2^3-x_2^2-(x_2^2-x_2)$ and thus
\[
 \ben M \ge x_2^3-x_2^2-(x_2^2-x_2)=
x_2(x_2^2-2x_2+1) > x_2^2-2 x_2.
\]
From \eqref{B1}, it follows that $\ben M > B_1$ holds, in contradiction 
with our hypothesis, and 2. is proved.
\end{enumerate}
\end{proof}

%--------------------------------------------------------------------------
\section{Prefixes}\label{prefi}
%--------------------------------------------------------------------------

%--------------------------------------------------------------------------
\subsection{Plain prefixes and suffixes}\label{prefi1}
%--------------------------------------------------------------------------
\begin{definition}
Let $j$ be a positive integer.
\begin{enumerate}
\item
For every positive integer $M$ let us define the fraction
\begin{equation}\label{pjsqrtx1}
\pi^{(j)}(M)= \prod_{p \le p_j} p^{v_p(M)-v_p(N)}
= \prod_{p \le p_j} p^{v_p(M)-\al_p}
\end{equation}
and call $\pi^{(j)}(M)$ the $j$-prefix of $M$. 
\item
We note $\ct_j$, and call it the set of \emph{$j$-prefixes},
the set of fractions
\begin{equation}\label{Tj}
\ct_j=\left\{\de=\prod_{p\le p_j} p^{z_p};\quad z_p \ge -\al_p\right\}.
\end{equation}
\item
For $B' \ge 0$, we define
\begin{equation}\label{deftaujb}
\ct_j(B') = \set{\delta \in \ct_j\; ;\ \ben(N\delta) \le B'}.
\end{equation}
\end{enumerate}
\end{definition}

\begin{definition}
Le $M$ be a positive integer. 
%such that $\ben(M) < B_1$ (defined by \eqref{B1}).
Let us define
\begin{equation}\label{pidef}
\pi(M) = \prod_{p < \sqrt{x_1}} p^{v_p(M)-\al_p}
= \pi^{(j_1)}(M)
\end{equation}
where $p_{j_1}$ is the largest prime less than $\sqrt{x_1}$,
and $\xi(M) = M/(N\pi(M))$. Thus we have
\begin{equation}\label{gpref}
M=N\,\pi(M)\,\,\xi(M).
\end{equation}
$\pi(M)$ will be called the \emph{plain prefix} of $M$, and $\xi(M)$ the
\emph{suffix} of $M$.
\end{definition}

Let us show that, for each $j$ such that $p_j < \sqrt{x_1}$, we have
\begin{equation}\label{benpij}
\ben(N\pi^{(1)}(M))\le \ldots \le \ben(N\pi^{(j)}(M))
\le \ldots \le \ben(N\pi(M)) \le \ben M.
\end{equation}

Indeed, \eqref{benpd} yields $\ben(N\pi^{(j)})=
\sum_{i\le j} \ben_{p_i} M$ and $\ben M = \sum_p \benp M$, which implies
\eqref{benpij}, since, by \eqref{benp}, $\benp M$ is non-negative.

\begin{definition}\label{defsuf}
From now on, we shall note
\begin{equation}\label{defpixi}
\pi^{(j)} = \pi^{(j)}(g(n)),
\qquad \pi = \pi(g(n)), \qquad
\xi = \xi(g(n))
\end{equation}
so that $g(n) = N\pi\xi$ and our work is to compute $\pi$ and $\xi$. 
\end{definition}
Note that $\pi$ and $\xi$ are coprime and \eqref{benfrac} implies
\begin{equation}\label{bengpixi}
\ben g(n)=\ben (N\pi\xi)=\ben (N\pi)+\ben (N\xi).
\end{equation}

%We have the following lemma
\begin{lem}\label{lempref}
Let $j$ be a positive integer and $\de_1 < \de_2$ be two elements of
$\ct_j$ satisfying
\begin{equation}\label{lpij}
\ell\big(\de_2N\big) \le \ell\big(\de_1 N\big).
\end{equation}
Then, $\de_1$ is not the $j$-prefix of $g(n)$\;; in other words, 
$\pi^{(j)} \ne \de_1$.
\end{lem}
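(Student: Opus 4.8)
The plan is to argue by contradiction: assume $\pi^{(j)}=\de_1$, i.e. the $j$-prefix of $g(n)$ equals $\de_1$, and produce from this a permutation-order (equivalently, a valid integer) strictly larger than $g(n)$ but still with $\ell$-value at most $n$, contradicting the maximality \eqref{g} of $g(n)$. The candidate competitor is obtained by replacing the $j$-prefix of $g(n)$ by the larger prefix $\de_2$; concretely, if $g(n)=N\pi^{(j)}\,\sigma$ where $\sigma=g(n)/(N\pi^{(j)})$ collects the prime powers at primes $p>p_j$, I would set $M := N\de_2\,\sigma = g(n)\cdot(\de_2/\de_1)$. Since $\de_2 > \de_1$ we have $M > g(n)$, so by \eqref{pcarg} it suffices to show $\ell(M)\le n$.

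The key estimate is $\ell(M)\le \ell(g(n))$, after which $\ell(g(n))\le n$ from \eqref{lgn} finishes the argument. To get this, I would use the additivity of $\ell$ over the coprime blocks. The primes dividing $\sigma$ are all $>p_j$, while $\de_1,\de_2\in\ct_j$ involve only primes $\le p_j$; and $N=\prod_p p^{\al_p}$. So $\ell(M)-\ell(g(n))$ telescopes into a sum over primes $p\le p_j$ only, with the contribution at each such $p$ being $\ell(p^{z_p(\de_2)+\al_p})-\ell(p^{z_p(\de_1)+\al_p})$ (interpreting $\ell$ on rationals as in the Notation subsection, $\ell(A/B)=\ell(A)-\ell(B)$). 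Summing these over $p\le p_j$ gives exactly $\ell(\de_2 N)-\ell(\de_1 N)$ — here one has to be a little careful that $v_p(g(n))=z_p(\de_1)+\al_p\ge 0$ for each $p\le p_j$, which is automatic since $\de_1$ is a genuine $j$-prefix of the integer $g(n)$, hence the exponents $z_p(\de_1)\ge -\al_p$ and the "numerator" exponents $z_p+\al_p$ are $\ge 0$; this is what makes the block-wise rewriting legitimate and ensures $M$ is an integer. Thus $\ell(M)=\ell(g(n))+\ell(\de_2N)-\ell(\de_1N)$, and by hypothesis \eqref{lpij} the difference $\ell(\de_2N)-\ell(\de_1N)\le 0$, so $\ell(M)\le\ell(g(n))\le n$.

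With $\ell(M)\le n$ and $M>g(n)$, \eqref{pcarg} (or directly \eqref{g}) is violated, so the assumption $\pi^{(j)}=\de_1$ is untenable, proving the lemma. The main obstacle — really the only delicate point — is the bookkeeping at primes $p\le p_j$: one must verify that forming $M=g(n)\cdot(\de_2/\de_1)$ does not create a negative exponent and that the $\ell$-values recombine cleanly, i.e. that the cross terms at primes $>p_j$ genuinely cancel. This is exactly where the hypothesis $\de_1,\de_2\in\ct_j$ (only primes $\le p_j$) and the definition \eqref{Tj} of $\ct_j$ (exponents bounded below by $-\al_p$) are used; everything else is immediate from the additivity of $\ell$ and the two recorded facts \eqref{g} and \eqref{lgn}.
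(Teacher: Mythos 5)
Your proof is correct and follows essentially the same route as the paper's: both form the competitor $M=(\de_2/\de_1)\,g(n)$, use the additivity of $\ell$ over coprime blocks to get $\ell(M)=\ell(g(n))+\ell(\de_2N)-\ell(\de_1N)\le \ell(g(n))\le n$, and then invoke \eqref{pcarg} to conclude $M\le g(n)$, contradicting $M>g(n)$. The only cosmetic difference is that the paper splits the part of $g(n)$ complementary to $\pi^{(j)}$ into the plain prefix and the suffix and applies \eqref{ladd}, whereas you keep it as a single block $\sigma$ supported on primes $>p_j$; your remark that the condition $z_p\ge-\al_p$ in \eqref{Tj} keeps $M$ integral is a worthwhile detail the paper leaves implicit.
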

\begin{proof}
If $\de_1= \pi^{(j)}$, equation $g(n) = N\pi\xi$ may be written
$g(n)=N \left(\de_1\dfrac{\pi}{\pi^{(j)}}\right)\xi$. 
Set  $M=N \left(\de_2\dfrac{\pi}{\pi^{(j)}}\right)\xi=(\de_2/\de_1) g(n)$. 
%$M=N(\de_2/\pi/\pi^{(j)})\xi=(\de_2/\de_1) g(n)$.
From \eqref{ladd}, \eqref{lpij} and \eqref{lgn}, we get
\begin{eqnarray*}
\ell(M) &=& \ell\big(\de_2N\big)+
\ell \left(N\frac{\pi}{\pi^{(j)}}\right)+\ell(N\xi)-2\ell(N)\\
& \le &\ell\big(\de_1N\big)+\ell\left(N\frac{\pi}{\pi^{(j)}}\right)
+\ell(N\xi)-2\ell(N)= \ell(g(n)) \le n
\end{eqnarray*}
which, from \eqref{pcarg}, implies $M \le g(n)$ and therefore
$\de_2 \le \de_1$, in contradiction with our hypothesis. Note that our 
hypothesis implies $\ben (\de_2 N) < \ben (\de_1 N)$.
\end{proof}

%--------------------------------------------------------------------------
\subsection{Computing plain prefixes}\label{compplain}
%--------------------------------------------------------------------------

Let us suppose that we know an upper bound $B$ 
such that \eqref{benmaxg} holds. 
Then from \eqref{benpij} and \eqref{benmaxg},
for every $j$ such that $p_j < \sqrt x_1$,  $\ben(N\pi^{(j)})\le B$
holds. Let $p_{j_1}$ be the largest prime less than $\sqrt{x_1}$.
Then $\pi = \pi^{(j_1)}(g(n))$ is an element of $\ct_{j_1}(B)$. 

But, we are faced to 2 problems:
First, for the moment, we do not know $B$.
Secondly, for a given value $B'$, the sets $\ct_j(B')$
are too large to be computed efficiently.  

What we can do is the following.
Let $B' < B_1$.
We shall construct two non-decreasing sequences
of sets $\cu_j=\cu_j(B')$ and $\cd_j=\cd_j(B')$ with $\cd_j \subset
\cu_j \subset \ct_j(B')$ satisfying the following property: \emph{
$\cd_j$ contains the $j$-prefix $\pi^{(j)}$ of $g(n)$, provided
that $\ben g(n) \le B'$ holds.}

These sequences are defined by the following induction rule.
The only element of $\ct_0$ is $1$. We set $\cu_0=\cd_0=\{1\}$.
And, for $j \ge 1$,
\begin{itemize}
\item
We define  \dsm{\cu_{j} = 
\setof{\de p_j^{\ga}}{\de \in \cd_{j-1},\; \ga \ge -\al_{p_j}
\text{ and } \ben(N\de p_j^{\ga})\le B'}}.
%We define  $\cu_{j}$ as the subset of $\ct_j(B')$ 
%whose elements are of the form $\de p_{j}^\ga$, with
%$\de \in \cd_{j-1}$.
\item
By lemma \ref{lempref}, if $\de_1 \in \cu_j$ and if there is
a $\de_2$ in $\cu_j$ such that
$\de_1 < \de_2$ and $\ell(N\de_1) \ge \ell(N\de_2)$, 
then $\de_1$ is not the $j$-prefix of $g(n)$.
The set $\cd_j$ is $\cu_j$ from which are removed these $\de_1$'s.
In other words, $\cd_{j}$ will be the pruned set of $\cu_{j}$ (see Section
\ref{merpru}).
\end{itemize}

For each $\de$ in $\cd_{j-1}$, $\de p_{j}^\ga$
belongs to $\cu_{j}$ if
$\ga\ge -\al_{p_{j}}$ and $\ben(N\de p_{j}^\ga)\le B'$ which,  
according to \eqref{benfrac}, can be rewritten as
\begin{equation}\label{benga}
\ben(Np_{j}^\ga) \le B' -\ben(N\de).  
\end{equation}
It results from Lemma \ref{bencr} that $\ben(Np_{j}^\ga)$  
is non-increasing for 
$-\al_{p_{j}}\le \ga\le 0$, non-decreasing for $\ga\ge 0$, 
vanishes for $\ga=0$  and tends  
to infinity with $\ga$. Therefore the 
solutions in $\ga$ of
\eqref{benga} form a finite interval containing $0$. 

Thanks to \eqref{benpij}, by induction on $j$, it can be seen that 
if $\ben g(n)\le B'$, the $j$-prefix $\pi^{(j)}$ of $g(n)$ 
belongs to $\cu_j$ and also to $\cd_j$, by Lemma \ref{lempref}.

We set $\cd(B')=\cd_{j_1}(B')$ and since $\pi=\pi_{j_1}$, 
$\cd(B')$ contains the plain prefix $\pi$ of $g(n)$, 
provided that $\ben g(n) \le B'$ holds.

This construction solves our second problem: at each step of
the induction, the pruning algorithm makes $\cd_j(B')$
smaller than $\cu_j(B')$, and as we progress, $\cd_j(B')$
becomes much smaller than $\ct_j(B')$.

%--------------------------------------------------------------------------
\subsection{Computing $B$, an upper bound for the benefit}\label{upbb}
%--------------------------------------------------------------------------

It remains to find an upper bound
$B$ such that \eqref{benmaxg} holds.
The key is Proposition \ref{propmajben}.
Every $M$ such that $\ell(M) \le n$ gives an upper bound for 
$\ben g(n)+n-\ell(g(n))$:
\[
\ben g(n)\le \ben g(n)+n-\ell(g(n)) \le \ben M+ n -\ell(M).
\]

We choose some $B'$, a provisional value of $B$
satisfying \footnote{
In view of \eqref{benO} and after some experiments, 
our choice is $B'=\rho$ for $2485\le n\le 10^{10}$
while, for greater $n$'s, we take $B'=\rho/2$, and for smaller $n$'s,
$B'=B_1-\vep$ where $\vep$ is some very small positive number.
}  $B' < B_1$ .
Then we compute the set $\cd=\cd(B')$, and by using the prefixes
belonging to  this set we shall construct an integer $M$
to which we apply Proposition \ref{propmajben}.

Let us recall that $p_k$ denotes the greatest prime dividing $N$. To an 
element $\de\in \cd(B')$ and to an integer $\om$, we associate
\[
\de_\om=
\begin{cases}
\de p_{k+1}p_{k+2}\ldots p_{k+\om} & \text{ if } \;\om > 0\\
\de & \text{ if } \; \om=0\\
\de/ (p_{k}p_{k-1}\ldots p_{k+\om+1}) & 
\text{ if }\;  \om < 0 \text{ and } p_{k+\om+1} \ge \sqrt{x_1}. \\
\end{cases}   
\]
From the definition of prefixes, the prime factors of both the numerator
and the denominator of $\de\in \cd(B')$ are smaller than $\sqrt{x_1}$, and 
thus smaller than the primes dividing the numerator or the denominator of 
$\de_\om/\de$.

First, to each $\de\in\cd$, let $\om=\om(\de)$ be the greatest 
integer such that $\ell\big(N\de_{\om}\big)\le n$ (if there is no such 
$\om(\de)$, we just forget this $\de$). We call $\de^{(0)}$ an element 
of $\cd$ which minimizes 
$\ben\big(N\de_{\om}^{(0)}\big)+n-\ell\big(N\de_{\om}^{(0)}\big)$ and  set
$M=N\de_{\om}^{(0)}$. From the construction of $M$, we have $\ell(M)\le n$.
By Proposition \ref{propmajben}, inequality \eqref{benmaxg} 
is satisfied with $B=\ben M+n-\ell(M)$.

If $B\le B'$, we stop and keep $B$; otherwise we start again with $B$
instead of $B'$ to eventually obtain a better bound.

For $n=1000064448$, the value of $\rho$ defined by \eqref{NN'} is equal to
$\rho\approx 12661.7$; the table below displays some values of 
$B'/\rho$ and the corresponding values of Card$(\cd(B'))$ 
and $B/\rho$ given by the above method.
\[
\begin{array}{|c|ccccccccc|}
\hline
B'/\rho & 0 & 0.2 & 0.4 & 0.6 & 0.7 &0.8 & 0.9 & 1 & 1.1\\
\hline
|\cd(B')| & 1 & 11 & 34 & 76 & 109 & 139 & 165 & 194 & 224 \\
\hline
B/\rho & 7.5 & 1.15 & 1.13 & 1.104 & 1.098 & 1.082 & 1.074 & 1.055 & 1.055 \\
\hline
\end{array} 
\]

In this example, if our first choice for $B'$ is $0.6 \rho$, we find 
$B=1.104\rho$. Starting again the algorithm with $B'=1.104 \rho$, we get the
slightly better value $B=1.055 \rho$.

The value of $B$ given by this method is reasonable and less than 10\% 
more than the best possible one: for $n=1000366$, we find
$B\approx 436.04$ while $\ben(g(n)+n-\ell(g(n))\approx 406.1$;
for $n=1000064448$, these two numbers are $13361.6$ and $13285.7$.

%--------------------------------------------------------------------------
\subsection{How many plain prefixes are there?}
%--------------------------------------------------------------------------

Let us denote by $B=B(n)$ the upper bound satisfying \eqref{benmaxg} as 
computed in Section \ref{upbb}. Let us call $\wt{n}$  the integer in the range
$\ell(N)..\ell(N')-1$ such that $B(\wt{n})$ is maximal. 

Let us denote by $\nu=\nu(n)$ the number of possible plain prefixes 
as obtained by the algorithm described in Section \ref{compplain}. Actually, 
this number $\nu$ depends on $B=B(n)$
and we may think that it is a non-decreasing function on $B$ 
so that the maximal number of prefixes 
used to compute $g(m)$ for $\ell(N)\le m < \ell(N')$ 
should be equal to $\nu(\wt{n})$.

For the powers of $10$, the table of {\sc Fig.\,}\ref{figpref}
displays $n$, $\wt{n}$, 
the quotient of the maximal benefit $B(\wt{n})$ by $\rho$, 
the maximal number of plain prefixes $\nu(\wt{n})$ and the exponent
$\log \nu(\wt{n})/\log n$. Note that replacing $\log n$ 
by $\log \wt{n}$ will not change very much this exponent, since 
with the notation of Definition \ref{fixN},
we have $|\wt{n}-n|\le \ell(N')-\ell(N)\le p_{k+1} \lesssim \sqrt{n\log n}$. 

\begin{figure}
\[
\begin{array}{|l|c|l|c|l|}
\hline
&&&\nu(\wt{n})= \# \text{ of }&\text{exponent}=\\
n& \wt{n}&B(\wt{n})/\rho&\text{plain prefixes} &\log \nu(\wt{n})/\log n\\
\hline
10^3&10^3-11&0.9289&14&0.3820\\
10^4&10^4-10&0.8453&19&0.3197\\
10^5&10^5-123&0.8095&22&0.2685\\
10^6&10^6+366&0.9186&51&0.2846\\
10^7&10^7-1269&0.7636&59&0.2530\\
10^8&10^8+639&1.180&85&0.2412\\
10^9&10^9+64448&1.055&212&0.2585\\
10^{10}&10^{10}+88835&0.6884&252&0.2401\\
10^{11}&10^{11}+1007566&0.9278&657&0.2561\\
10^{12}&10^{12}+2043578&1.118&2873&0.2882\\
10^{13}&10^{13}+5276948&0.8331&3805&0.2754\\
10^{14}&10^{14}+17212588&0.6669&7048&0.2749\\
10^{15}&10^{15}-44672895&0.6433&15148&0.2787\\
10^{16}&10^{16}-48912919&0.5077&25977&0.2759\\
10^{17}&10^{17}-426915678&0.6001&72341&0.2858\\
10^{18}&10^{18}+385838833&0.3027&144807&0.2867\\
10^{19}&10^{19}-9639993444&0.2963&170151&0.2753\\
10^{20}&10^{20}+12041967315&0.3218&412151&0.2808\\
%10^{21}&10^{21}-182312257586&0.4383&??&  \\
\hline
\end{array}
\]
\caption{The number of plain prefixes.}
\label{figpref}
\end{figure}

The behaviour of $\nu(\wt{n})$ looks regular and allows to think that
\begin{equation}\label{n0.3}
\nu(\wt{n})=O(n^{0.3}).
\end{equation}

%--------------------------------------------------------------------------
\subsection{For $\ben(M)$ small, prime factors of $\xi(M)$ are large}
%--------------------------------------------------------------------------
If the number $B$ computed as explained in Section \ref{upbb}
is greater than $B_1$ our algorithm fails. Fortunately, we have not yet found
any $n\ge 166$ for which that bad event occurs.

\begin{prop}
If $B$ is computed as explained in Section \ref{upbb} (so that \eqref{benmaxg}
holds) and satisfies  $B <B_1$ (where $B_1$ is defined in \eqref{B1}) then, in
view of \eqref{x1rx2}, there exists a unique real number $t_1$ such that
\begin{equation}\label{x1rhox2}
2 < x_2 < \sqrt{x_1} < \rho = \frac{x_1}{\log x_1}< t_1  < x_1
\end{equation}
and
\begin{equation}\label{deft1}
\rho \log t_1 -t_1 = B.
\end{equation}
Further, if $\ben M \le B$, we have
\begin{enumerate}
\item
If $x_2 \le p < t_1$ then $v_p(M) \ge 1 = v_p(N)$.
\item
If $x_2 \le p < \sqrt{x_1}$ then $v_p(M) \in \set{1,2}$ and $v_p(N)=1$.
\item
If $\sqrt x_1 \le p < t_1$ then $v_p(M) = v_p(N) = 1$. 
\item
If $t_1 \le p < x_1$ then $v_p(M) \in \set{0,1}$ and $v_p(N)=1$.
\item
If $x_1 \le p $ then $v_p(M) \in \set{0,1}$ and $v_p(N)=0$.
\end{enumerate}
\end{prop}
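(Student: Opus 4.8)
The first job is to establish the existence and uniqueness of $t_1$ together with the chain of inequalities \eqref{x1rhox2}. Consider the function $h(t)=\rho\log t-t$ on $[\rho,+\infty[$. Since $h'(t)=\rho/t-1$, $h$ is decreasing on this interval, with $h(\rho)=\rho\log\rho-\rho=\rho(\log\rho-1)>0$ because $\rho\ge 5/\log 5>e$, and $h(x_1)=\rho\log x_1-x_1=0$ by the definition \eqref{defxi} of $x_1$ (which gives $x_1=\rho\log x_1$). Hence, since $0\le B<B_1<x_1/2-\sqrt{x_1}$ and in particular $0\le B$, the equation $h(t_1)=B$ has a unique solution in $]\rho,x_1[$ when $B>0$, and $t_1=x_1$ when $B=0$ (one may treat $B=0$ separately or note that \eqref{x1rhox2} is then read with $\rho<t_1<x_1$ slightly relaxed). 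Combined with \eqref{x1rx2}, which was already proved, this yields \eqref{x1rhox2}.

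\textbf{The five assertions.} Points 2. and the valuation-of-$N$ halves of 1.--5. are immediate from \eqref{Nxj} (equivalently \eqref{Nrho}): since $2<x_2<\sqrt{x_1}<t_1<x_1$, every prime $p$ with $x_2\le p<x_1$ has $v_p(N)=1$, every prime $p\ge x_1$ has $v_p(N)=0$, and Proposition \ref{proplp} (whose hypothesis $\ben M<B_1$ is met because $\ben M\le B<B_1$) gives $v_p(M)\le 1$ for $p\ge\sqrt{x_1}$ and $v_p(M)\le 2$ for $x_2\le p<\sqrt{x_1}$. This already delivers 2., the upper bounds in 3., 4., 5., and reduces everything to showing the \emph{lower} bounds: namely that $\ben M\le B$ forces $v_p(M)\ge 1$ whenever $x_2\le p<t_1$ (this is 1., and it implies the ``$=1$'' in 3. once the upper bound $v_p(M)\le1$ from point 1.\ of Proposition \ref{proplp} is used).

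\textbf{The core estimate (main obstacle).} The heart of the matter is point 1.: if $x_2\le p<t_1$ then $v_p(M)\ge1$. Suppose not, so $v_p(M)=0$ while $v_p(N)=1$. Then by \eqref{benp} and the shape of $\ell$ on prime powers,
\[
\ben_p(M)=\ell(1)-\ell(p)-\rho(0-1)\log p=\rho\log p-p.
\]
By \eqref{benpd} and nonnegativity of the other $\ben_q(M)$ we get $\ben M\ge\rho\log p-p$. Now the function $t\mapsto\rho\log t-t=h(t)$ is decreasing on $[\rho,+\infty[$; but here $p$ may be as small as $x_2<\rho$, so one must be slightly careful. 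On $]1,\rho]$, $h$ is increasing, and on all of $]1,x_1]$ we have $h(t)\ge\min(h(x_2),h(x_1))$... this is not quite what is needed. Instead, the right statement is: on the interval $[x_2,t_1]$ we want $h(p)\ge B$ to fail, i.e.\ we want $h(p)>B$ — but $h(p)>B$ would give the contradiction. Since $h$ increases on $[x_2,\rho]$ and decreases on $[\rho,t_1]$, on $[x_2,t_1]$ its minimum is attained at an endpoint: $\min(h(x_2),h(t_1))=\min(h(x_2),B)$. So the argument closes provided $h(x_2)>B$, i.e.\ provided $\rho\log x_2-x_2>B$. This is where $B<B_1$ enters once more: from \eqref{B1}, $B<B_1\le x_2^2-2x_2$, and from \eqref{defxi} we have $\rho=(x_2^2-x_2)/\log x_2$, hence $\rho\log x_2-x_2=x_2^2-x_2-x_2=x_2^2-2x_2\ge B_1>B$. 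Therefore $h(p)\ge\min(h(x_2),h(t_1))>B$ for every $p\in[x_2,t_1)$ (the endpoint $t_1$ being excluded, or handled by strictness), contradicting $\ben M\le B$. This proves 1., and with it 3.; assertions 4. and 5. are then just the combination of $v_p(N)$ from \eqref{Nxj} with the upper bound $v_p(M)\le1$ from Proposition \ref{proplp}. The only genuinely delicate point, and the one I would write out most carefully, is the interplay of the two regimes of monotonicity of $h$ on $[x_2,t_1]$ together with the quantitative use of the bound $B<B_1\le x_2^2-2x_2$ to rule out the left endpoint; everything else is bookkeeping with \eqref{Nxj} and Proposition \ref{proplp}.
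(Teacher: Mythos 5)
Your proof is correct and follows essentially the same route as the paper: the same function $f(t)=\rho\log t-t$ with its two regimes of monotonicity on $[x_2,\rho]$ and $[\rho,x_1]$, the same key inequality $f(x_2)=x_2^2-2x_2\ge B_1>B=f(t_1)>0=f(x_1)$, the same contradiction via $\benp(M)=\rho\log p-p$ when $v_p(M)=0$ and $v_p(N)=1$, and the same appeal to Proposition \ref{proplp} for the upper bounds on $v_p(M)$. The only cosmetic point is that the existence of $t_1$ in $]\rho,x_1[$ already requires $f(\rho)>B$ (not merely $f(\rho)>0$), which you should invoke at that spot rather than only in the later paragraph; it follows at once from your own computation $f(\rho)\ge f(x_2)=x_2^2-2x_2\ge B_1>B$.
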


\begin{proof}
The function $f(t) = \rho
\log t -t$ is increasing on $[x_2,\rho]$ and decreasing on
$[\rho,x_1]$.  From \eqref{defxi} and \eqref{B1} we have
\begin{equation*}
f(\rho) > f(x_2) = \frac{x_2^2-x_2}{\log x_2}\log x_2 - x_2 
= x_2^2- 2 x_2 \ge B_1 > B > 0 = f(x_1)
\end{equation*}
which gives the existence and unicity of $t_1$, which belongs to
$(\rho,\, x_1)$. Now we prove points 1,2,3,4,5.
\medskip

Let $p$ be a prime number satisfying $x_2\le p < t_1$.
If $p$ does not divide $M$, from \eqref{benpd} and \eqref{benp} we have
\[\ben M \ge \benp M=\rho\log p-p=f(p) > f(t_1)=B.\]
Since $\ben M\le B$ is supposed to hold, there is a contradiction 
and 1 is proved.

Since we have assumed that $B< B_1$ holds, Proposition \ref{proplp}
may be applied. Point 2. follows from point 1. and from item 2. of
Proposition \ref{proplp}, while point 3. follows from point 1. and
from item 1. of Proposition \ref{proplp}.  Finally, points 4. and 5.
are implied by item 1. of Proposition \ref{proplp}.
\end{proof}

\begin{coro}
Let us assume that $B$ is such that \eqref{benmaxg} and $B < B_1$ hold.
Then the suffix $\xi=\xi(g(n))$ defined in Definition \ref{defsuf}
can be written as
\begin{equation}\label{defsf}
\xi=\xi(g(n)) = 
\frac{p_{i_1}p_{i_2}\dots p_{i_u}}{p_{j_1}p_{j_2}\dots p_{j_v}}
\qquad{u \ge 0,\ v \ge 0}
\end{equation}
where (we recall that $p_k$ is the largest prime factor of $N$)
\begin{equation}\label{hypxi}
2 < x_2 < \sqrt{x_1} < \rho < t_1 \le p_{j_1} < p_{j_2} \dots < p_{j_v }
\le p_k < p_{i_1} < \dots < p_{i_u}.
\end{equation}
\end{coro}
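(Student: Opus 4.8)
The plan is to read off the structure of the suffix $\xi = \xi(g(n)) = g(n)/(N\pi)$ directly from the valuation constraints established in the preceding proposition, applied to $M = g(n)$. First I would note that the hypotheses are met: we are assuming \eqref{benmaxg} and $B < B_1$, and by \eqref{bengpixi} and the fact that benefits are non-negative (see \eqref{benpij}), $\ben g(n) \le B$. So the proposition applies with $M = g(n)$, giving us the real number $t_1$ satisfying \eqref{x1rhox2}–\eqref{deft1} and the five cases describing $v_p(g(n))$ relative to $v_p(N) = \al_p$.

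Next I would recall from the definition of the plain prefix \eqref{pidef} that $\pi = \pi(g(n)) = \prod_{p < \sqrt{x_1}} p^{v_p(g(n)) - \al_p}$, so by \eqref{gpref} the suffix is $\xi = g(n)/(N\pi) = \prod_{p \ge \sqrt{x_1}} p^{v_p(g(n)) - \al_p}$. Thus $\xi$ involves only primes $p \ge \sqrt{x_1}$, and I need to determine the exponent $v_p(g(n)) - \al_p$ in each of the relevant ranges. By case 3 of the proposition, for $\sqrt{x_1} \le p < t_1$ we have $v_p(g(n)) = \al_p = 1$, so these primes contribute exponent $0$ and do not appear in $\xi$ at all. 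By case 4, for $t_1 \le p < x_1$ we have $\al_p = 1$ and $v_p(g(n)) \in \{0,1\}$, so the exponent in $\xi$ is $0$ or $-1$: such a prime appears in the denominator of $\xi$ (with exponent $1$) or not at all. By case 5, for $p \ge x_1$ we have $\al_p = 0$ and $v_p(g(n)) \in \{0,1\}$, so the exponent is $0$ or $1$: such a prime appears in the numerator or not at all. Since $p_k < x_1 \le p_{k+1}$ by \eqref{pkx1}, the primes $\ge x_1$ are exactly $p_{k+1}, p_{k+2}, \dots$, and the primes in $[t_1, x_1)$ are among $p_{j} \le p_k$; combining, every prime in the denominator of $\xi$ lies in $[t_1, p_k]$ and every prime in the numerator is $> p_k$, with all of them $> \rho > \sqrt{x_1} > x_2 > 2$. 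Writing the numerator as $p_{i_1}\cdots p_{i_u}$ and the denominator as $p_{j_1}\cdots p_{j_v}$ in increasing order then yields \eqref{defsf} and the chain \eqref{hypxi}.

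The argument is essentially bookkeeping once the proposition is in hand, so there is no serious obstacle; the only point needing a word of care is verifying that $\ben g(n) \le B$ so that the proposition is legitimately applicable to $M = g(n)$ — this follows because $B$ was chosen (Section \ref{upbb}) precisely so that $\ben g(n) \le \ben g(n) + n - \ell(g(n)) \le B$, which is \eqref{benmaxg}. One should also remark that $\xi$ is indeed a reduced fraction with numerator primes strictly larger than all denominator primes, which is immediate from the disjointness of the ranges $(p_k, \infty)$ and $[t_1, p_k]$; and that $\xi$ is squarefree, which is exactly the content of the exponents lying in $\{0, \pm 1\}$ in cases 3, 4, 5.
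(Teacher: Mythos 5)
Your proof is correct and is exactly the intended derivation: the paper states this corollary without proof, treating it as an immediate consequence of the preceding proposition applied to $M=g(n)$ (legitimate since \eqref{benmaxg} gives $\ben g(n)\le B$ because $n-\ell(g(n))\ge 0$), and your case-by-case read-off of the exponents $v_p(g(n))-\al_p$ for $p\ge\sqrt{x_1}$, together with $p_k<x_1\le p_{k+1}$ from \eqref{pkx1} and the chain \eqref{x1rhox2}, is precisely that argument.
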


%--------------------------------------------------------------------------
\subsection{Normalized prefix of $g(n)$}\label{normpref}
%--------------------------------------------------------------------------

\begin{definition}
Let $u$ and $v$ be as defined in \eqref{defsf} and $\omega = u-v$.
We define the \emph{normalized suffix} $\sigma$ of $g(n)$
by
\begin{enumerate}
\item
If $\omega \ge 0$ 
\[
\sigma = 
\frac{p_{i_1}\dots p_{i_u}}{p_{j_1}\dots p_{j_v}p_{k+1}\dots p_{k+\omega}}
=\frac{\xi}{p_{k+1}\dots p_{k+\omega}}\cdot
\]
\item
If $\omega <0$, we set $\omega' = -\omega$ and 
\[
\sigma = 
\frac{p_{i_1}\dots p_{i_u}p_k\dots p_{k-\omega'+1}}{p_{j_1}\dots p_{j_v}}=
\xi p_{k}\dots p_{k-\omega'+1}.
\]
\end{enumerate}
The \emph{normalized prefix} $\Pi$ of $g(n)$ is defined by
\begin{equation}\label{PI}
\Pi = \frac{g(n)}{N\sigma}
=
\begin{cases}
\pi p_{k+1}p_{k+2}\dots p_{k+\omega} &\text{ if } \omega \ge 0\\[2.5ex]
\dfrac{\pi}{p_k\dots p_{k-\omega'+1}}&\text{ if } \omega < 0.
\end{cases}
\end{equation}
\end{definition}

\begin{prop}
Let $\sigma$ be the normalized suffix of $g(n)$. Then
\begin{equation*}
\sigma = \frac{Q_1 Q_2\dots Q_s}{q_1 q_2\dots q_s}
\end{equation*}
where $s$ is a non-negative integer with
\begin{enumerate}
\item If $\omega \ge 0$ then $u \le s \le v$ and
\begin{equation}\label{benPI}
\ben(N\Pi) = 
\ben(N\pi) + \sum_{i=1}^{\omega} \ben(Np_{k+i})
=\ben(N\pi)+\sum_{i=1}^{\omega} (p_{k+i}-\rho\log p_{k+i}),
\end{equation}
\begin{equation}\label{Delta}
\ell(\si)=\sum_{i=1}^s (Q_i-q_i)=p_{i_1}+\ldots +p_{i_u}-
(p_{j_1}+\ldots +p_{j_v})-(p_{k+1}+\ldots +p_{k+\om})\ge 0.
\end{equation}
\item If $\omega < 0$ then $v \le s \le u$ and, with
$\omega'=-\omega=v-u$,
we have
\begin{equation}\label{benPI-}
\ben(N\Pi) = 
\ben(N\pi) + \sum_{i=0}^{\omega'-1} \ben\pfrac{N}{p_{k-i}}
=\ben(N\pi)+\sum_{i=0}^{\omega'-1} (\rho\log p_{k-i}-p_{k-i})
\end{equation}
\begin{equation}\label{Delta-}
\ell(\si)=\sum_{i=1}^s (Q_i-q_i)=p_{i_1}+\ldots +p_{i_u}-
(p_{j_1}+\ldots +p_{j_v})+(p_{k}+\ldots +p_{k-\om'+1})\ge 0.
\end{equation}
\end{enumerate}
In both cases we have also
\begin{equation}\label{Qq}
\sqrt{x_1} <\rho < t_1 < q_1 < \dots < q_s \le p_{k+\omega}
< Q_1 < \dots < Q_s.
\end{equation}
\end{prop}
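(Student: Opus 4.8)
The plan is to reduce the statement to the description \eqref{hypxi} of the suffix $\xi$, to the additivity formula \eqref{benfrac}, and to a direct inspection of which primes cancel when $\xi$ is multiplied or divided by the block of consecutive primes around $p_k$ that enters the definition of $\sigma$. Consider first $\omega\ge0$, so $\sigma=\xi/(p_{k+1}\cdots p_{k+\omega})$. By \eqref{hypxi} the numerator $p_{i_1}\cdots p_{i_u}$ of $\xi$ is a product of $u$ distinct primes, all $>p_k$, its denominator $p_{j_1}\cdots p_{j_v}$ a product of $v$ distinct primes in $[t_1,p_k]$, and $p_{k+1}\cdots p_{k+\omega}$ a product of $\omega=u-v$ distinct primes, all $\ge p_{k+1}\ge x_1$ by \eqref{pkx1}; in particular no prime of the numerator of $\xi$ can equal one of its denominator, so a cancellation in $\sigma$ occurs exactly when some $p_{i_a}$ equals some $p_{k+c}$, i.e. when $k<i_a\le k+\omega$. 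Since $\sigma$ has $u=v+\omega$ prime factors above and below before simplification, deleting the equally many cancelled ones leaves $\sigma=Q_1\cdots Q_s/(q_1\cdots q_s)$ with $s$ the common remaining count; between $0$ and $\omega$ primes cancel, so $s$ lies between $\min(u,v)$ and $\max(u,v)$, the stated bounds. The surviving numerator primes have index $>k+\omega$, hence exceed $p_{k+\omega}$, and the surviving denominator primes do not exceed $p_{k+\omega}$; \eqref{Qq} is then read off from \eqref{x1rhox2}, \eqref{hypxi} and \eqref{pkx1}. The case $\omega<0$ is symmetric: there $\sigma=\xi\,p_kp_{k-1}\cdots p_{k-\omega'+1}$, the $\omega'=v-u$ adjoined primes lie in $[t_1,p_k]$ (using \eqref{hypxi}, since $v\ge\omega'$), a cancellation occurs when some $p_{j_a}\in\{p_{k-\omega'+1},\dots,p_k\}$, and the identical bookkeeping gives the balanced form, the bounds on $s$, and \eqref{Qq} with $p_{k+\omega}=p_{k-\omega'}$.

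For the benefit identities, write $\pi$ in lowest terms as a ratio of two positive integers supported on the primes $<\sqrt{x_1}$, the denominator dividing $N$. The primes of the adjoined block all exceed $\sqrt{x_1}$ (being $\ge p_{k+1}\ge x_1$ if $\omega\ge0$, and $\ge t_1>\sqrt{x_1}$ if $\omega<0$, by \eqref{x1rhox2}), hence are coprime to that numerator and denominator; and when $\omega<0$ each divides $N$ with exponent exactly $1$ by \eqref{Nxj}, so adjoining the block to the denominator keeps it a divisor of $N$. Iterating \eqref{benfrac}, first to split off $\pi$ and then prime by prime inside the block, gives
\[
\ben(N\Pi)=\ben(N\pi)+\sum_{i=1}^{\omega}\ben(Np_{k+i})\ \ (\omega\ge0),\qquad
\ben(N\Pi)=\ben(N\pi)+\sum_{i=0}^{\omega'-1}\ben(N/p_{k-i})\ \ (\omega<0).
\]
The closed forms \eqref{benPI} and \eqref{benPI-} are then the cases $\gamma=1$ of Lemma \ref{bencr}: $\ben(Np_{k+i})=\ell(p_{k+i})-\rho\log p_{k+i}=p_{k+i}-\rho\log p_{k+i}$ since $p_{k+i}\nmid N$, and $\ben(N/p_{k-i})=\rho\log p_{k-i}-p_{k-i}$ since $v_{p_{k-i}}(N)=1$. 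Finally, \eqref{Delta} and \eqref{Delta-} follow by applying the additivity of the extended function $\ell$ (with $\ell(q)=q$ for $q$ prime) to the defining product for $\sigma$, and the non-negativity $\ell(\sigma)=\sum_{i=1}^s(Q_i-q_i)\ge0$ is immediate from \eqref{Qq}, which forces $Q_i>q_i$ for every $i$.

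The only genuinely delicate point is the bookkeeping above: one must check that, although the primes in the numerator of $\xi$ and those in the adjoined block are a priori unrelated collections (and likewise the denominator of $\xi$ and the block when $\omega<0$), exactly the same number of primes disappears from the numerator and from the denominator of $\sigma$ under simplification. This hinges on the identity $\omega=u-v$ (resp. $\omega'=v-u$) together with the fact, visible from \eqref{hypxi}, that a prime of the denominator of $\xi$ is too small ever to cancel a prime of its numerator. Everything else is a routine application of \eqref{benfrac} and Lemma \ref{bencr}.
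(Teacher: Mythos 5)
Your proof is correct and follows essentially the same route as the paper's: cancellation bookkeeping based on the ordering \eqref{hypxi}, additivity \eqref{benfrac} for the benefit identities, and the observation that the adjoined primes have exponent $0$ (resp.\ $1$) in $N$ when $\omega\ge 0$ (resp.\ $\omega<0$). Note that your bounds $\min(u,v)\le s\le\max(u,v)$ agree with the paper's own proof (which obtains $v\le s\le u$ for $\omega\ge 0$ and $u\le s\le v$ for $\omega<0$); the inequalities as printed in the statement of the proposition are transposed.
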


\begin{proof}
If  $u\ge v$ then
$\om=u-v\ge 0$,
\begin{equation}\label{sig}
\si=\frac{p_{i_1}\ldots p_{i_u}}
{p_{j_1}\ldots p_{j_v}p_{k+1}\ldots p_{k+\om}}
=\frac{\xi}{p_{k+1}\ldots p_{k+\om}}\cdot
%%\qtx{and}
%%\xi=p_{k+1}p_{k+2}\ldots p_{k+\om}.
\end{equation}
Since the prime factors $p_{i_1}\ldots p_{i_u}$ of the numerator are distinct
of the prime factors $p_{j_1}\ldots p_{j_v}$ of the denominator, $\si$ 
can be written after simplification
\begin{equation}\label{sigQ}
\si=\frac{Q_1Q_2\ldots Q_s}{q_1q_2\ldots q_s}
\end{equation}
where $v\le s\le u$ and, from \eqref{hypxi}, we have
\begin{equation*}%\label{Qq}
\sqrt{x_1} < \rho < t_1 < q_1 < q_2 < \ldots < q_s \le p_{k+\om} <
Q_1 < Q_2 < \ldots < Q_s
\end{equation*}
which is \eqref{Qq}.
From \eqref{benfrac} we get \eqref{benPI} while \eqref{Delta} follows 
from \eqref{sig} and \eqref{sigQ}. 
\medskip

Similarly, if $u < v$ holds, $\om'=v-u > 0$.
So, $\om'\le v$, and from \eqref{hypxi}, 
$p_{k-\om'+1}\ge p_{k-v+1}\ge p_{j_1}>t_1$; 
\eqref{sig} and \eqref{sigQ} become
\begin{equation}\label{sig-}
\si= %%\xi\, p_{k}p_{k-1}\ldots p_{k-\om'+1}=
\frac{p_{i_1}\ldots p_{i_u}p_{k}\ldots p_{k-\om'+1}}{p_{j_1}\ldots p_{j_v}}=
\frac{Q_1\ldots Q_s}{q_1\ldots q_s}
\end{equation}
where $u\le s\le v$ and we have 
\begin{equation}\label{Qq-}
\sqrt{x_1} < \rho < t_1 < q_1 <  \ldots < q_s \le p_{k-\om'} =p_{k+\om} <
Q_1 <  \ldots < Q_s.
\end{equation}
which is again \eqref{Qq}.
 
By definition, any prime factor of $\pi$ is smaller than $\sqrt{x_1}$. 
Therefore, by \eqref{Qq-}, $p_{k-\om'+1}$ 
is greater than any prime factor of $\pi$, \eqref{benfrac} can be applied 
and \eqref{benPI} becomes \eqref{benPI-} 
while \eqref{Delta} becomes \eqref{Delta-}.
\end{proof}

The value of the parameter $\om$ can be computed from the following 
proposition.
It is convenient to set $S_\om=\sum_{i=1}^\om p_{k+i}$ (for $\om \ge 0$)
and $S_\om=-\sum_{i=0}^{-\om-1} p_{k-i}$ (for $\om < 0$). In both cases, 
from \eqref{ladd}, we have 
\begin{equation}\label{Som}
S_\om=\ell(N\Pi)-\ell(N\pi).
\end{equation}
\begin{prop}\label{propPi}
The relative integer $\om$ which determines the normalized prefix $\Pi$ 
of $g(n)$ (cf. \eqref{PI}) satisfies the following inequalities:
\begin{equation}\label{calculom}
n-\ell(N\pi)-\frac{B}{1-\rho/t_1} \le 
n-\ell(N\pi)-\frac{B-\ben (N\Pi)}{1-\rho/t_1} \le S_\om\le n-\ell(N\pi)
\end{equation}
where $\pi$ is the prefix of $g(n)$ and
$B$ and $t_1$ satisfy \eqref{benmaxg} and \eqref{deft1}.
\end{prop}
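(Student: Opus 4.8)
The plan is to start from the two facts that pin down $\om$: on one hand $\ell(g(n))\le n$ (by \eqref{lgn}), and on the other hand $g(n)$ is a genuine record, so any competitor with $\ell\le n$ is $\le g(n)$ (by \eqref{pcarg}). Writing $g(n)=N\Pi\si$ and using \eqref{Som}, the identity $\ell(g(n))=\ell(N\Pi)+\ell(N\si)-\ell(N)=\ell(N\pi)+S_\om+\ell(\si)$ together with $\ell(\si)\ge 0$ (from \eqref{Delta} or \eqref{Delta-}) immediately gives the upper bound $S_\om\le n-\ell(N\pi)$. The work is therefore entirely in the lower bound.

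\textbf{The lower bound.} For the left-hand inequalities, I would argue that $\om$ is \emph{maximal} in a suitable sense: if $\om$ were smaller, one could append one more large prime $p_{k+\om+1}$ to $\Pi$ (or, when $\om<0$, remove one fewer small prime), obtaining a new prefix $\Pi'$ with $\ell(N\Pi')$ still $\le n$ but $N\Pi'\si > N\Pi\si=g(n)$, contradicting \eqref{pcarg}. Concretely, consider $M=g(n)\cdot p_{k+\om+1}$ (in the $\om\ge0$ case). Then $\ell(M)=\ell(g(n))+p_{k+\om+1}$, and since $\ell(M)>n$ must hold, we get $p_{k+\om+1}>n-\ell(g(n))=n-\ell(N\pi)-S_\om-\ell(\si)$. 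To convert this into the stated bound with the factor $1/(1-\rho/t_1)$, I would feed this information into the benefit inequality: by Proposition \ref{propmajben} and \eqref{benmaxg}, $\ben g(n)+n-\ell(g(n))\le B$, and by \eqref{bengpixi}, \eqref{benPI} (resp. \eqref{benPI-}) the quantity $\ben(N\Pi)-\ben(N\pi)=\sum \ben(Np_{k+i})$ is a sum of terms $p_{k+i}-\rho\log p_{k+i}$. Here is where the function $t_1$ enters: by \eqref{deft1}, $\rho\log t_1-t_1=B$, and by monotonicity (the function $t\mapsto t-\rho\log t$ is increasing for $t>\rho$, since all the $p_{k+i}>t_1>\rho$ by \eqref{Qq}), each term $p_{k+i}-\rho\log p_{k+i}$ is bounded below in a way that can be compared against $B$. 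The key elementary estimate is that for $p>\rho$ one has $p-\rho\log p\ge (1-\rho/t_1)(p-t_1)$ — a consequence of concavity of $\log$ comparing the chord and tangent, or directly of $\rho\log p-p\le \rho\log t_1-t_1 = B$ rearranged — which lets one sum the telescoping contributions of $S_\om=\sum p_{k+i}$ and extract the factor $1/(1-\rho/t_1)$.

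\textbf{Assembling.} Combining: $\ben(N\Pi)\le \ben g(n)\le B-(n-\ell(g(n)))$, so $\ben(N\Pi)+n-\ell(N\pi)-S_\om-\ell(\si)\le B$; using $\ell(\si)\ge 0$ gives $n-\ell(N\pi)-S_\om\le B-\ben(N\Pi)$. On the other side, the maximality argument shows $S_\om$ cannot be too small — adding $p_{k+\om+1}>t_1$ pushes $\ell$ past $n$ — and combining the two via the estimate above yields $S_\om\ge n-\ell(N\pi)-\frac{B-\ben(N\Pi)}{1-\rho/t_1}$, whence also the weaker bound with $\ben(N\Pi)$ dropped (since $\ben(N\Pi)\ge 0$). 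The $\om<0$ case is symmetric, using $\ben(N/p_{k-i})=\rho\log p_{k-i}-p_{k-i}$ and the fact that $p_{k-\om'+1}>t_1$ (established in the proof of the previous proposition, via \eqref{Qq-}).

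\textbf{Main obstacle.} The delicate point is the lower bound on $S_\om$, specifically justifying the factor $1/(1-\rho/t_1)$: one must carefully track how the benefit of the large primes appended (or removed) compares, term by term, to the linear quantity $p-t_1$, and ensure the telescoping sum of benefits dominates $\ell(N\Pi)-\ell(N\pi)=S_\om$ scaled by $1-\rho/t_1$. The clean way is the tangent-line bound $p-\rho\log p\ge(1-\rho/t_1)(p-t_1)+B$ valid for all $p$ (equality at $p=t_1$), summed over the primes in the suffix; everything else is bookkeeping with \eqref{benmaxg}, \eqref{benPI}, \eqref{Delta} and $\ell(\si)\ge0$.
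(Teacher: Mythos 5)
Your upper bound $S_\om\le n-\ell(N\pi)$ is correct and matches the paper's. The lower bound, which you rightly flag as the hard part, has a genuine gap, because you never obtain an \emph{upper} bound on $\ell(\si)$. In your assembling step you pass from $\ben(N\Pi)+n-\ell(N\pi)-S_\om-\ell(\si)\le B$ to $\ben(N\Pi)+n-\ell(N\pi)-S_\om\le B$ ``using $\ell(\si)\ge 0$''; but $\ell(\si)\ge 0$ means dropping $-\ell(\si)$ \emph{increases} the left-hand side, so the implication runs the wrong way. The ingredient that actually produces the factor $1/(1-\rho/t_1)$ --- and which your proposal never identifies --- is Lemma~\ref{lemme1}, Point~1, applied to $\si=Q_1\cdots Q_s/(q_1\cdots q_s)$: since by \eqref{Qq} all its prime factors exceed $t_1$, one has $\si\le\exp(\ell(\si)/t_1)$, hence $\rho\log\si\le\rho\,\ell(\si)/t_1$ and, using \eqref{sig} and \eqref{benPI},
\[
\ben(N\xi)\ \ge\ \ell(\si)\Bigl(1-\frac{\rho}{t_1}\Bigr)+\ben(N\Pi)-\ben(N\pi).
\]
Together with $\ben g(n)=\ben(N\pi)+\ben(N\xi)\le B-(n-\ell(g(n)))$ this bounds $\ell(\si)$ above by a quantity of size $B/(1-\rho/t_1)$, and \eqref{l(sig)} then converts that into the required lower bound on $S_\om$. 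Your substitutes do not achieve this. The tangent-line estimate is misstated: at $p=t_1$ one has $p-\rho\log p=-B$ by \eqref{deft1}, so $p-\rho\log p\ge(1-\rho/t_1)(p-t_1)+B$ is false; the corrected bound $p-\rho\log p\ge(1-\rho/t_1)(p-t_1)-B$, applied separately to each of the $\om$ appended primes, produces an $\om$-dependent leftover term with no counterpart in \eqref{calculom}. And the ``maximality'' observation, which correctly gives $p_{k+\om+1}>n-\ell(g(n))$, only bounds $n-\ell(g(n))$ above by a quantity of order $\sqrt{n\log n}$ --- far too coarse to control $\ell(\si)$ or $n-\ell(N\pi)-S_\om$ at the scale of $B$.
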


\begin{proof}
Let us prove Proposition \ref{propPi} for $\om\ge 0$; 
the case $\om < 0$ is similar.
From \eqref{sigQ}, \eqref{Qq} and \eqref{Delta}, Lemma \ref{lemme1} (i) yields 
\begin{equation}\label{sigG}
1\le \si \le \exp\left(\frac{\ell(\si)}{t_1}\right).
\end{equation}
From \eqref{defsf} and \eqref{Delta}, we have
\begin{equation}\label{lsig}
\ell(N\xi)-\ell(N)= p_{i_1}+\ldots +p_{i_u}-(p_{j_1}+\ldots +p_{j_v})=
\ell(\si)+S_\om.
\end{equation}
So, we get successively
\begin{eqnarray*}
\ben(N\xi) &=& 
\ell(N\xi)-\ell(N)-\rho\log \xi\quad\text{by \eqref{ben}}\notag\\
&=&\ell(\si)+\sum_{i=1}^\om (p_{k+i}-\rho\log p_{k+i})-\rho\log \si 
\quad\text{by \eqref{sig}}\notag\\
& \ge & \ell(\si)+\sum_{i=1}^\om (p_{k+i}-\rho\log p_{k+i})
-\frac{\rho \ell(\si)}{t_1}\quad\text{by \eqref{sigG}}\notag\\
&=&  \ell(\si)\left(1-\frac{\rho}{t_1}\right)
+\ben(N\Pi)-\ben(N\pi)\quad\text{by \eqref{benPI}}.%\label{bensig}
\end{eqnarray*}
From \eqref{Delta}, we have $\ell(\si)\ge 0$.
Since, from \eqref{Qq}, $\rho < t_1$ holds, the above result together with
\eqref{bengpixi}, \eqref{benmaxg} and \eqref{lgn} implies that

\begin{multline}\label{Delmaj}
0\le \ell(\si) \le 
\frac{\ben(N\xi)-\ben(N\Pi)+\ben(N\pi)}{1-\rho/t_1}
=\frac{\ben g(n)-\ben(N\Pi)}{1-\rho/t_1}\\
\le \!\frac{B-\ben(N\Pi)-n+\ell(g(n))}{1-\rho/t_1}
\le \frac{B-\ben(N\Pi)}{1-\rho/t_1}-(n-\ell(g(n))).
\end{multline}
Now, from \eqref{ladd}, and \eqref{lsig}, we get
\begin{equation}\label{616}
\ell(g(n))=\ell(N\pi\xi)=\ell(N\pi)+\ell(N\xi)-\ell(N)=
\ell(N\pi)+\ell(\si)+S_\om.
\end{equation}
Further, since
\begin{equation}\label{l(sig)}
n-\ell(N\pi)=\ell(g(n))-\ell(N\pi)+n-\ell(g(n))=\ell(\si)+
S_\om+n-\ell(g(n)),
\end{equation}
we get from \eqref{Delmaj} and \eqref{lgn}
\begin{equation}\label{calculom2}
n-\ell(N\pi)-\frac{B-\ben(N\Pi)}{1-\rho/t_1} \le
S_\om\le n-\ell(N\pi)
\end{equation}
and \eqref{calculom} follows, since $\ben(N\Pi)\ge 0$. Note that 
\eqref{calculom2} implies
\begin{equation}\label{benPIB}
\ben(N\Pi)\le B.
\end{equation}
\end{proof}

%--------------------------------------------------------------------------
\subsection{Computing possible normalized prefixes}\label{compnorm}
%--------------------------------------------------------------------------

In Section \ref{compplain}, we have computed $B$ such that 
\eqref{benmaxg} holds
and a set $\cd=\cd(B)$ containing the plain prefix $\pi$ of $g(n)$.
By construction, we know that any prime factor of $\pi\in\cd$ 
is smaller than $\sqrt{x_1}$ and thus, from \eqref{x1rhox2}, smaller 
than $t_1$.

\begin{definition}
We call \emph{possible normalized prefix} a positive rational number 
$\Pih=\Pih(\pih,\om)$ 
of the form $\Pih=\pih p_{k+1}\ldots p_{k+\om}$ (with $\om\ge 0$) or
$\Pih=\pih/(p_k\ldots p_{k+\om+1})$ (with $\om < 0$), 
where $\pih\in\cd(B)$ is a plain prefix, and satisfying
\begin{equation}\label{pkt1h}
p_{k+\om+1}\ge t_1
\end{equation}
and
\begin{equation}\label{Som1}
n-\ell(N\pih)-\frac{B}{1-\rho/t_1} \le 
n-\ell(N\pih)-\frac{B-\ben (N\Pih)}{1-\rho/t_1} \le S_\om\le n-\ell(N\pih)
\end{equation}
with $S_\om=\sum_{i=1}^\om p_{k+i}$ (if $\om \ge 0$)
and $S_\om=-\sum_{i=0}^{-\om-1} p_{k-i}$ (if $\om < 0$).
\end{definition}

Let us denote by $\cn$ the set of possible normalized prefixes; $\cn$ has 
been defined in such a way that the normalized prefix $\Pi$ of $g(n)$
belongs to $\cn$. Indeed, from \eqref{PI}, $\Pi$ 
has the suitable form, the plain prefix $\pi$ of $g(n)$ 
belongs to $\cd(B)$, \eqref{Som1} is satisfied by Proposition \ref{propPi} 
and \eqref{pkt1h} by \eqref{Qq}.

Let us observe that, if $\om$ increases by $1$, by \eqref{Qq},
$S_\om$ increases by at least $t_1$. In practice,
$1-\rho/t_1$ is close to $1$ and $B$ is much smaller than $t_1$ so that
for most of the $\pih$'s there is no solution to \eqref{Som1} 
and there are few possible normalized prefixes.
For $n$ in the range $[998001,1000000]$, the number of 
possible normalized prefixes is $1$ (resp. $2$ or $3$) for 1439 values (resp. 
547 or 94). For instance, for $n=998555$, the three possible 
normalized prefixes are $1,43/41,11/10$.

Finally, for a reason given in the next section,
for every $\Pih\in\cn$, we check that the following inequality holds:
\begin{equation}\label{pkomt1}
p_{k+\om+1}-(n-\ell(N\Pih))\ge \sqrt{x_1}.
\end{equation}
This inequality seems reasonable, since, from \eqref{pkt1h}, we have
$p_{k+\om+1}\ge t_1$ with $t_1$  close to $x_1$, and, from
\eqref{Som1}, $n-\ell(N\Pih)=n-\ell(N\pih)-S_\om \le B/(1-\rho/t_1)$ 
which is much smaller than $x_1$. We have not found any counterexample to
\eqref{pkomt1}.

%--------------------------------------------------------------------------
\subsection{The heart of the algorithm}\label{heart}
%--------------------------------------------------------------------------

We have now a list $\cn$ of possible
normalized prefixes containing the normalized prefix $\Pi$ of
$g(n)$. For $\Pih=\Pih(\pih,\om)\in \cn$ let us introduce
\begin{equation}\label{gPIn}
g(\Pih,n)=
N\Pih G(p_{k+\om},n-\ell(N\Pih))=
N\Pih \frac{Q_1Q_2\ldots Q_s}{q_1q_2\ldots q_s}
\end{equation}
where $G(p_{k+\om},n-\ell(N\Pih))=\dfrac{Q_1Q_2\ldots Q_s}{q_1q_2\ldots q_s}$ 
is defined by \eqref{G1}. 
We shall use the following proposition to compute $g(n)$.
\begin{prop}
The following formula gives the value of $g(n)$:
\begin{equation}\label{gmaxPI}
g(n)=\max_{\Pih\in \cn}\; g(\Pih,n)=
\max_{\Pih\in \cn}\; N\Pih G(p_{k+\om},n-\ell(N\Pih)).
\end{equation}
\end{prop}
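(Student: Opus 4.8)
The goal is to show that $g(n) = \max_{\Pih \in \cn} N\Pih\,G(p_{k+\om}, n - \ell(N\Pih))$. The argument splits naturally into two inequalities. For the lower bound $g(n) \ge N\Pih\,G(p_{k+\om}, n-\ell(N\Pih))$ for every $\Pih \in \cn$: given $\Pih = \pih p_{k+1}\cdots p_{k+\om}$ (say $\om \ge 0$) and the primes $Q_1 < \cdots < Q_s$, $q_1 > \cdots > q_s$ realizing $G(p_{k+\om}, n - \ell(N\Pih))$, I would form the integer $M = N\Pih\,\frac{Q_1\cdots Q_s}{q_1\cdots q_s}$. The point is that $M$ is a genuine positive integer: the $q_i \le p_{k+\om}$ all divide $N\Pih$ to the first power (they lie in the range $t_1 \le q_i \le p_{k+\om}$, and by \eqref{Nxj} and \eqref{PI} these primes occur in $N\Pih$ with exponent exactly $1$), while the $Q_i \ge p_{k+\om+1}$ do not divide $N\Pih$ at all. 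Then by \eqref{lGm} and \eqref{Som}, $\ell(M) = \ell(N\Pih) + \ell\!\big(\tfrac{Q_1\cdots Q_s}{q_1\cdots q_s}\big) \le \ell(N\Pih) + (n - \ell(N\Pih)) = n$, so by \eqref{pcarg} we get $M \le g(n)$, which is exactly the lower bound. (Here I need to check that inequality \eqref{pkomt1} guarantees the constraint $0 \le m = n - \ell(N\Pih) \le p_{k+\om+1}-3$ required for $G(p_{k+\om},m)$ to be defined — this is precisely the ``reason given in the next section'' referenced after \eqref{pkomt1}; the case $\om < 0$ is symmetric.)

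**The upper bound.** For the reverse inequality I would take $M = g(n)$ itself. By Definition \ref{defsuf} and \eqref{PI}, $g(n) = N\Pi\sigma$ where $\Pi$ is the normalized prefix and $\sigma$ the normalized suffix of $g(n)$. We have already observed (just before this statement) that $\Pi \in \cn$, i.e. the true normalized prefix is one of the possible normalized prefixes. The proposition that precedes this one writes $\sigma = \frac{Q_1\cdots Q_s}{q_1\cdots q_s}$ with the chain of inequalities \eqref{Qq}: $t_1 < q_1 < \cdots < q_s \le p_{k+\om} < Q_1 < \cdots < Q_s$. Comparing with the defining conditions \eqref{qsQ} of $G$, the primes realizing $\sigma$ are admissible competitors in the maximum defining $G(p_{k+\om}, m)$ provided $\sum_i (Q_i - q_i) = \ell(\sigma) \le m := n - \ell(N\Pi)$. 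But by \eqref{616} (or equivalently \eqref{l(sig)}) together with \eqref{lgn}, $\ell(\sigma) = \ell(g(n)) - \ell(N\Pi) \le n - \ell(N\Pi)$ exactly. Hence $\sigma \le G(p_{k+\om}, n - \ell(N\Pi))$, and multiplying by $N\Pi$ gives $g(n) = N\Pi\sigma \le N\Pi\,G(p_{k+\om}, n-\ell(N\Pi)) = g(\Pi, n) \le \max_{\Pih\in\cn} g(\Pih,n)$.

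**Conclusion and main obstacle.** Combining the two inequalities yields \eqref{gmaxPI}. The routine part is the integrality check and the bookkeeping with $\ell$; the genuinely delicate point is making sure the hypotheses on the function $G$ are met, namely that for each $\Pih \in \cn$ the quantity $m = n - \ell(N\Pih)$ satisfies $0 \le m \le p_{k+\om+1} - 3$ so that $G(p_{k+\om}, m)$ is even defined — the lower bound $m \ge 0$ is the right-hand inequality of \eqref{Som1}, and the upper bound is where \eqref{pkomt1} is invoked (which is asserted to hold in all observed cases and is essentially what Section~\ref{fight} addresses). One must also double-check the $\om < 0$ case of the lower-bound construction: there $\Pih = \pih/(p_k\cdots p_{k+\om+1})$, so some primes $p_{k+\om+1}, \dots, p_k$ that divided $N$ no longer divide $N\Pih$, but condition \eqref{pkt1h} forces $p_{k+\om+1} \ge t_1$, keeping everything in the range where \eqref{Qq} applies and the $q_i$'s still divide $N\Pih$ to the first power. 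Apart from these verifications the proof is a direct matching of the definition of $G$ against Landau's characterization \eqref{pcarg}.
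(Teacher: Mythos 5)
Your proof is correct and follows essentially the same route as the paper's: each $g(\Pih,n)$ satisfies $\ell(g(\Pih,n))\le n$ and hence $g(\Pih,n)\le g(n)$ by \eqref{pcarg}, while the normalized suffix $\si$ is an admissible competitor in the maximum defining $G(p_{k+\om},n-\ell(N\Pi))$ by \eqref{Qq} and \eqref{616}, giving $g(n)\le g(\Pi,n)$. The only small nuance is that the paper invokes \eqref{pkomt1} chiefly to force $q_s\ge\sqrt{x_1}$ (via $p_{k+\om+1}-q_s\le n-\ell(N\Pih)$), i.e.\ to separate the prime factors of the $G$-fraction from those of the plain prefix so that $\ell$ is additive — which is precisely the integrality and additivity check you carry out, rather than a domain condition on $m$.
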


\begin{proof}
Note that 
\eqref{qsQ} and \eqref{Qimqi} imply either $s=0$ or
the smallest prime factor $q_s$ of $G(p_{k+\om},n-\ell(N\Pih))$ 
satisfies $p_{k+\om+1}-q_s\le n-\ell(N\Pih)$ which, from \eqref{pkomt1}, 
implies $q_s\ge \sqrt{x_1}$ and thus, the 
prime factors of $\pi$ and those of $G(p_{k+\om},n-\ell(N\Pih))$ are distinct.
Therefore, for any $\Pih=\Pih(\pih,\om) \in\cn$ with $\om \ge 0$,
we get from \eqref{gPIn}, \eqref{ladd} and \eqref{lGm} 
\begin{eqnarray*}
\ell(g(\Pih,n)) &=& \ell(N\pih)+
\ell\left(N\frac{p_{k+1}\ldots p_{k+\om}Q_1\ldots Q_s}{q_1\ldots q_s}\right)
-\ell(N)\\
&=&\ell(N\pih)+\sum_{i=1}^\om p_{k+i}+\sum_{i=1}^s (Q_i-q_i)\\
&=& \ell(N\Pih)+\ell(G(p_{k+\om},n-\ell(N\Pih)))\\
&\le& \ell(N\Pih)+n-\ell(N\Pih)=n.
\end{eqnarray*} 
Inequality $\ell(g(\Pih,n))\le n$ can be proved similarly in the case
$\om < 0$.
\bigskip

\ni
Since $\ell(g(\Pih,n))\le n$ holds, \eqref{pcarg} implies for all $\Pih\in \cn$
\begin{equation}\label{gPIsup}
g(\Pih,n)\le g(n).
\end{equation}
From \eqref{PI}, we get $g(n)=N\Pi \si$ where 
$\Pi$ is the normalized prefix of $g(n)$. Now, if $\om \ge 0$,
from \eqref{Delta}, \eqref{616}, \eqref{PI} 
and \eqref{lgn}, we have 
\begin{eqnarray}\label{msi}
\ell(\si)=\sum_{i=1}^s (Q_i-q_i) &=& \ell(g(n))-\ell(N\pi)
-\sum_{i=1}^\om p_{k+i}\notag\\
&=&\ell(g(n))-\ell(N\Pi) \le n-\ell(N\Pi)
\end{eqnarray}
($\ell(\si)\le n-\ell(N\Pi)$ still holds for $\om < 0$). Therefore, in 
view of \eqref{Qq}  and of 
Definition \eqref{G1} of function $G$, we have
\begin{equation}\label{gPIG}
g(n)=N\Pi\si \le N\Pi G(p_{k+\om},n-\ell(N\Pi))=g(\Pi,n).
\end{equation}
Since $\Pi\in \cn$, \eqref{gPIG} and \eqref{gPIsup} prove \eqref{gmaxPI}.
\end{proof}

%--------------------------------------------------------------------------
\subsection{The fight of normalized prefixes}\label{fight}
%--------------------------------------------------------------------------

Let $\Pih_1$ and $\Pih_2$ two normalized prefixes.
By using Inequalities \eqref{Gineq} below, it is sometimes possible 
to eliminate $\Pih_1$ or $\Pih_2$.

Indeed, from \eqref{Gineq}, we deduce a lower and 
an upper bound for $g(\Pih,n)$ (defined in \eqref{gPIn}):
\[g'(\Pih,n)\le   g(\Pih,n) \le g''(\Pih,n).\]
If, for instance, $g''(\Pih_1,n) < g'(\Pih_2,n)$ holds, then clearly $\Pih_1$ 
cannot compete in \eqref{gmaxPI} to be the maximum. 

By this simple trick, it is possible to shorten the list $\cn$ of 
normalized prefixes. For instance, for $n=10^{15}$, the number of 
normalized prefixes is reduced from 9 to 1, while, for $n=10^{15}+
123850000$, it is reduced from 37 to 2.

%--------------------------------------------------------------------------
\section{A first way to compute $G(p_k,m)$}\label{grandpe}
%--------------------------------------------------------------------------

%--------------------------------------------------------------------------
\subsection{Function $G$}\label{subG}
%--------------------------------------------------------------------------

In this section, we study the function $G$ introduced in \eqref{G1}.
First, for $k\ge 3$ and $0\le m \le p_{k+1}-3$, we consider the set
\begin{equation}\label{cG}
\cg(p_k,m)=\left\{ F=\frac{Q_1Q_2\ldots Q_s}{q_1q_2\ldots q_s}; \quad
\ell(F)=\sum_{i=1}^s (Q_i-q_i) \le m,\;\; s\ge 0\right\}
\end{equation}
where the primes $Q_1,Q_2,\ldots,Q_s,q_1,q_2,\ldots,q_s$ satisfy \eqref{qsQ}.

The parameter $s=s(F)$ in \eqref{cG} is called the number of factors 
of the fraction $F$. If $s=0$, we set $F=1$ and $\ell(F)=0$ so that 
$\cg(p_k,m)$ contains $1$ and is never empty. The definition 
\eqref{G1} can be rewritten as
\begin{equation}\label{G}
G(p_k,m)=\max_{F\in \cg(p_k,m)} F.
\end{equation}
Obviously, $G(p_k,m)$ is non-decreasing on $m$ and $G(p_k,2m+1)=G(p_k,2m)$.
Note that the maximum in \eqref{G} is unique (from the unicity of the standard
factorization into primes). 
It follows from \eqref{qsQ} that, if $0\le m < p_{k+1}-p_k$, the set 
$\cg(p_k,m)$ contains only $1$, and therefore, 
\begin{equation}\label{Gpkm1}
0\le m < p_{k+1}-p_k\quad \LR \quad G(p_k,m)=1. 
\end{equation}

\begin{prop}\label{propgpk}
\begin{enumerate}
\item
Let $q$ be the smallest prime satisfying $q\ge p_{k+1}-m$. The following 
inequality holds
\begin{equation}\label{Gineq}
\frac{p_{k+1}}{q}\le G(p_k,m)\le \frac{p_{k+1}}{p_{k+1}-m}.
\end{equation}
Note that if $q=p_{k+1}-m$ is prime, then \eqref{Gineq} yields the
exact value of $G(p_k,m)$.
\item
Now, let $F=\dfrac{Q_1Q_2\ldots Q_s}{q_1q_2\ldots q_s}$ be any element 
of $\cg(p_k,m)$; we have
\begin{equation}\label{Gmin1}
G(p_k,m)\ge F \ge 1+\frac{\ell(F)}{p_k}\cdot
\end{equation}
\end{enumerate}
\end{prop}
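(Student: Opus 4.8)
The plan is to handle the two parts separately, since part 1 is about bounding $G(p_k,m)$ from both sides and part 2 is a pointwise lower bound on any admissible fraction.

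For part 1, I would first establish the upper bound $G(p_k,m)\le p_{k+1}/(p_{k+1}-m)$. Let $F=Q_1\cdots Q_s/(q_1\cdots q_s)$ be the fraction realizing the maximum in \eqref{G}, with the primes satisfying \eqref{qsQ} and $\sum_{i=1}^s(Q_i-q_i)\le m$, and set $S=\sum_{i=1}^s(Q_i-q_i)$. If $s=0$ the bound is trivial since $F=1\le p_{k+1}/(p_{k+1}-m)$. If $s\ge 1$, note that from \eqref{qsQ} we have $q_1\le p_k<p_{k+1}\le Q_1$, so $Q_1-q_1\ge p_{k+1}-p_k\ge 1$; moreover $S\le m\le p_{k+1}-3<p_{k+1}\le Q_1$, so the hypothesis $S<Q_1$ of part 2 of Lemma \ref{lemme1} is met (with the identification $t_1=q_s$). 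Applying that lemma gives $F\le Q_1/(Q_1-S)$. Since $t\mapsto t/(t-S)$ is decreasing in $t$ for $t>S$, and $Q_1\ge p_{k+1}$, we get $F\le p_{k+1}/(p_{k+1}-S)\le p_{k+1}/(p_{k+1}-m)$, using $S\le m$. For the lower bound $G(p_k,m)\ge p_{k+1}/q$ where $q$ is the smallest prime with $q\ge p_{k+1}-m$: I would simply exhibit the one-factor fraction $F_0=p_{k+1}/q$. This is admissible provided $q\le p_k<p_{k+1}\le p_{k+1}$ (the chain \eqref{qsQ} with $s=1$, $q_1=q$, $Q_1=p_{k+1}$) and $Q_1-q_1=p_{k+1}-q\le m$. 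The inequality $p_{k+1}-q\le m$ holds because $q\ge p_{k+1}-m$; and $q\le p_k$ needs checking: since $m\ge p_{k+1}-p_k$ is the interesting range (otherwise $G=1$ by \eqref{Gpkm1} and, I should check, the stated bounds still hold trivially or vacuously), we have $p_{k+1}-m\le p_k$, and the smallest prime $q\ge p_{k+1}-m$ is at most $p_k$ since $p_k$ itself is a prime $\ge p_{k+1}-m$. Hence $F_0\in\cg(p_k,m)$ and $G(p_k,m)\ge F_0=p_{k+1}/q$. The final remark is immediate: if $p_{k+1}-m$ happens to be prime then $q=p_{k+1}-m$ and the two bounds coincide.

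For part 2, let $F=Q_1\cdots Q_s/(q_1\cdots q_s)\in\cg(p_k,m)$ with $\ell(F)=\sum_{i=1}^s(Q_i-q_i)$. If $s=0$ then $F=1$ and $\ell(F)=0$, so $F=1=1+0/p_k$ and the inequality is an equality. If $s\ge 1$, I would argue multiplicatively: write
\[
F=\prod_{i=1}^s\frac{Q_i}{q_i}=\prod_{i=1}^s\left(1+\frac{Q_i-q_i}{q_i}\right).
\]
From \eqref{qsQ} each $q_i\le q_1\le p_k$, hence $(Q_i-q_i)/q_i\ge (Q_i-q_i)/p_k\ge 0$. Expanding the product and discarding all terms of degree $\ge 2$ (all nonnegative) leaves
\[
F\ge 1+\sum_{i=1}^s\frac{Q_i-q_i}{q_i}\ge 1+\sum_{i=1}^s\frac{Q_i-q_i}{p_k}=1+\frac{\ell(F)}{p_k},
\]
and $G(p_k,m)\ge F$ by \eqref{G}. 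That completes part 2.

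The only genuine subtlety—not really an obstacle, but the place that needs care—is the verification that the witness fraction $p_{k+1}/q$ in the lower bound of part 1 actually satisfies the admissibility chain \eqref{qsQ}, specifically that $q\le p_k$; this rests on the hypothesis $m\le p_{k+1}-3$ (so $p_{k+1}-m\ge 3$, keeping $q$ a genuine prime $\ge 3$) together with the observation that $p_k$ is itself a prime lying in $[\,p_{k+1}-m,\,p_k\,]$ whenever that interval is nonempty, i.e. whenever $m\ge p_{k+1}-p_k$. In the complementary range $0\le m<p_{k+1}-p_k$ one invokes \eqref{Gpkm1} and checks the stated bounds degenerate harmlessly. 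Everything else is a one-line application of Lemma \ref{lemme1} or of $1+u\le\prod(1+u_i)$ for nonnegative $u_i$.
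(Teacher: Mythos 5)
Your proposal is correct and follows essentially the same route as the paper: part 2 of Lemma \ref{lemme1} (you use the link $Q_1/(Q_1-S)$ of the chain, the paper the sharper $Q_s/(Q_s-\ell(F))$, but both transfer to $p_{k+1}/(p_{k+1}-m)$ via $Q_i\ge p_{k+1}$ and monotonicity), and the elementary inequality $\prod(1+u_i)\ge 1+\sum u_i$ for \eqref{Gmin1}. The only difference is that you spell out the witness $p_{k+1}/q$ and the degenerate range $m<p_{k+1}-p_k$, which the paper dismisses as obvious.
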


\begin{proof}
The lower bound in \eqref{Gineq} is obvious. Let us prove the upper bound.
If $0\le m < p_{k+1}-p_k$, the upper bound of \eqref{Gineq} follows 
by \eqref{Gpkm1}.
If $m \ge  p_{k+1}-p_k$, $\dfrac{p_{k+1}}{p_k}\in \cg(p_k,m)$ and thus
$G(p_k,m)\ge \dfrac{p_{k+1}}{p_k} > 1$.
Moreover, with the notation 
\eqref{cG}, if $G(p_k,m)=F=\dfrac{Q_1Q_2\ldots Q_s}{q_1q_2\ldots q_s}$, 
we have $s\ge 1$ and Lemma \ref{lemme1} (ii) implies
\begin{equation}\label{Gmaj}
G(p_k,m)\le \frac{Q_s}{Q_s-\ell(F)}\le \frac{Q_s}{Q_s-m}
\le \frac{p_{k+1}}{p_{k+1}-m}
\end{equation}
where the last inequality follows from \eqref{qsQ} and the
decrease of $t \mapsto t/(t-m)$.

Let us now prove   \eqref{Gmin1}.
This inequality holds if $\ell(F)=0$ (i.e., $F=1$ and $s=0$). If $s > 0$,
from \eqref{qsQ}, we get
\[\frac{Q_i}{q_i}=1+\frac{Q_i-q_i}{q_i}\ge 1+\frac{Q_i-q_i}{p_k},
\qquad i=1,2,\ldots,s\]
and
\[F=\prod_{i=1}^s \frac{Q_i}{q_i}\ge \prod_{i=1}^s\left(1+\frac{Q_i-q_i}{p_k}
\right) \ge 1+\frac{\sum_{i=1}^s(Q_i-q_i)}{p_k}=1+\frac{\ell(F)}{p_k}\cdot\]
\end{proof}

%--------------------------------------------------------------------------
\subsection{Function  $H$}
%--------------------------------------------------------------------------

Let $M\le p_{k+1}-3$; we want to calculate $G(p_k,m)$ for $0\le m \le M$.
Let us introduce a family of consecutive primes $P_0 < P_1 < \ldots < P_K=p_k
< P_{K+1} < \ldots < P_R < P_{R+1}$ (so that $P_i=p_{k+i-K}$ for 
$0\le i\le R+1$)
with the properties
\begin{equation}\label{pRpR'}
P_{R+1}-P_K > M,\quad R\ge K+1,\quad P_{K+1}-P_{0} > M, \quad P_1\ge 3.
\end{equation}
It follows from \eqref{cG} and \eqref{qsQ} that 
the prime factors $Q_1, \ldots, Q_s,q_1,\ldots, q_s$ of any element of
$\cg(p_k,m)=\cg(P_K,m)$ should satisfy
\begin{equation}\label{qRQ}
P_{1} \le q_s < \ldots < q_1 \le P_K = p_k < P_{K+1} \le Q_1 < 
\ldots < Q_s\le P_R.
\end{equation}
Of course, in \eqref{pRpR'} we may choose $P_R$ (resp.~$P_1$) as small 
(resp.~large) as possible, but it is not an obligation.

Let us denote by $Q'_{1},Q'_{2},\ldots, Q'_{R-K-s}$ the primes among
$P_{K+1},\ldots, P_R$ which are different of $Q_{1},\ldots, Q_{s}$; 
we have
\begin{equation}\label{Q'}
Q'_{1}+Q'_{2}+\ldots+ Q'_{R-K-s}=P_{K+1}+\ldots +P_{R}-
\left(Q_{1}+\ldots +Q_{s}\right)
\end{equation}
and \eqref{G} becomes
\begin{equation}\label{GQ'}
G(P_K,m)=\max \frac{P_{K+1}P_{K+2}\ldots P_{R}}
{Q'_{1}\ldots Q'_{R-K-s}q_{1}\ldots q_{s}}=
\frac{P_{K+1}P_{K+2}\ldots P_{R}}
{\min(q'_{1}\ldots q'_{R-K})}
\end{equation} 
where the minimum is taken over all the subsets 
$\{q'_{1},q'_{2}, \ldots, q'_{R-K}\}$ of $R-K$ elements of 
$\{P_{1},\ldots,P_R\}$ satisfying from \eqref{Qimqi} and \eqref{Q'}
\begin{eqnarray}\label{7hypH}
q'_{1}+q'_{2}+\ldots +q'_{R-K}
&=& Q'_{1}+Q'_{2}+\ldots + Q'_{R-K-s}+ q_1+q_2+\ldots +q_s\notag\\
&=& P_{K+1}+ P_{K+2}+\ldots + P_{R} -\sum_{i=1}^s (Q_i-q_i)\notag\\
&\ge&  P_{K+1}+ P_{K+2}+\ldots + P_{R} -m.
\end{eqnarray}
(Note that, from \eqref{pRpR'}, $R-K\ge 1$ holds).

\begin{definition}
For $1 \le r \le R$, $1\le j\le \min(r,R-K) \le R$ and
$m\ge 0$, we define
\begin{equation}\label{H}
H(j,P_r;m)=\min (q'_{1} q'_{2} \ldots q'_{j}) 
\end{equation}
where the minimum is taken over the $j$-uples of primes 
$(q'_{1},q'_{2}, \ldots, q'_{j})$ satisfying 
\begin{equation}\label{R'q'R}
P_{1} \le q'_1 < q'_2 < \ldots < q'_j \le P_r 
\end{equation}
and
\begin{equation}\label{q'mj}
q'_1 +q'_2 + \ldots + q'_j\ge P_{K+1}+P_{K+2}+\ldots + P_{K+j} -m.
\end{equation}
If there is no $(q'_{1},q'_{2}, \ldots, q'_{j})$ such that 
\eqref{R'q'R} and \eqref{q'mj} hold, we set
\begin{equation}\label{Hinfini}
H(j,P_r;m)=+\iy.
\end{equation}
\end{definition}

By the unicity of the standard factorization into primes, the minimum 
in \eqref{H} is unique and \eqref{GQ'}  and \eqref{H} yield
\begin{equation}\label{GH}
G(p_k,m)=G(P_K,m)=\frac{P_{K+1}P_{K+2}\ldots P_{R}}{H(R-K,P_R;m)}\cdot
\end{equation}
For $j=R-K$ and $r=R$, the $j$-uple $q'_{1},q'_{2}, \ldots, q'_{j}$ defined by
$q'_i=P_{K+i}$ satisfies \eqref{R'q'R} and \eqref{q'mj} for all $m\ge 0$;
so, $H(R-K,P_R;m)$ is at most $P_{K+1}P_{K+2}\ldots P_{R}$ and is finite.

%--------------------------------------------------------------------------
\subsection{A combinatorial algorithm to compute  $H$ 
and $G$}\label{comHG}
%--------------------------------------------------------------------------

\begin{definition}
For every integers $(r,j)$, $1 \le r \le R$ and $1 \le j \le R-K$, we define
\begin{multline}
m_j(P_r) = \\
\begin{cases}
P_{K+1}+P_{K+2}+\ldots +P_{K+j} -
(P_{r}+P_{r-1}+\ldots + P_{r-j+1}) &\text{ if } j \le r\\
+\infty &\text{ if } j > r.
\end{cases}
\end{multline}
\end{definition}

\begin{rem}
If $j\ge r+1$, \eqref{R'q'R} cannot be satisfied and,
from \eqref{Hinfini}, $H(j,P_r;m)=+\iy$ for 
all $m\ge 0$. 
If $j\le r$, from \eqref{q'mj}, it follows that, if $m\ge m_j(P_r)$, 
$H(j,P_r;m)\le P_{r}P_{r-1}\ldots P_{r-j+1}$ while, 
by \eqref{Hinfini}, if $m < m_j(P_r)$, $H(j,P_r;m)=+\iy$.
So that, in all cases, if $m < m_j(P_r)$, $H(j,P_r;m)=+\iy$.
\end{rem}

Note that, for $j$ fixed, $m_j(P_r)$ is non-increasing on $r$ since,
for $j\le r$,
\begin{equation}\label{mjr}
m_j(P_{r-1})-m_j(P_r)=
\begin{cases}
+\iy & \text{if } j=r\\
P_r-P_{r-j} > 0 & \text{if } 1\le j\le r-1,
\end{cases}
\end{equation}
and, for $j \ge r+1$, $m_j(P_{r-1})$ and $m_j(P_r)$ are both 
$+\infty$.
On the other hand, if $j\le \min(r,R-K)$ for every $m$ such that
\begin{equation*}\label{Mj}
m\ge M_j(P_r)=P_{K+1}+P_{K+2}+\ldots +P_{K+j}-
(P_{1}+P_{2}+\ldots + P_{j}),
\end{equation*}
$H(j,P_r;m)$ is equal to $P_{1}P_{2}\ldots P_{j}$.

\begin{prop}
For $j=1$, from \eqref{H}, \eqref{R'q'R} and \eqref{q'mj}, we have
\begin{equation}\label{H1}
H(1,P_r;m)=
\begin{cases}
P_{1} & \text{if } m\ge M_1(P_r)=P_{K+1}-P_{1}\\
\ldots&    \\
P_{i} & \text{if } 1 < i < r \text{ and } 
P_{K+1}-P_{i}\le m< P_{K+1}-P_{i-1}\\
\ldots&    \\
P_{r} & \text{if } m_1(P_r)=P_{K+1}-P_{r}\le m< P_{K+1}-P_{r-1}\\
\iy & \text{if } m < m_1(P_r)=P_{K+1}-P_{r}.
\end{cases}
\end{equation}
Further, we have the induction formula:
\begin{equation}\label{Hind}
H(j,P_r;m)=\min\left( H(j,P_{r-1};m), 
P_{r} H(j-1,P_{r-1};m-P_{K+j}+P_r) \right).
\end{equation}
\end{prop}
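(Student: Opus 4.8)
The plan is to establish the two assertions separately, both directly from the definition \eqref{H} of $H(j,P_r;m)$ as a minimum of products $q'_1 q'_2 \cdots q'_j$ over $j$-uples of primes satisfying the admissibility constraints \eqref{R'q'R} and \eqref{q'mj}, together with the convention \eqref{Hinfini}. For the base case $j=1$, a single prime $q'_1$ with $P_1 \le q'_1 \le P_r$ is admissible precisely when $q'_1 \ge P_{K+1}-m$, i.e.\ when $m \ge P_{K+1}-q'_1$. First I would note that, since we want to \emph{minimize} $q'_1$, the optimal choice is the smallest prime among $P_1,\ldots,P_r$ that is admissible; if $m \ge P_{K+1}-P_i$ then $P_i$ is admissible, and the threshold on $m$ at which $P_i$ becomes the minimal admissible choice (rather than some $P_{i'}$ with $i'<i$) is exactly $P_{K+1}-P_i \le m < P_{K+1}-P_{i-1}$. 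When $m < P_{K+1}-P_r = m_1(P_r)$ no prime in the allowed range is admissible, so $H(1,P_r;m)=+\infty$ by \eqref{Hinfini}. This gives \eqref{H1} by a straightforward case split on the value of $m$ relative to the thresholds $P_{K+1}-P_i$.

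For the induction formula \eqref{Hind}, the key step is a dichotomy on whether an optimal $j$-uple uses the prime $P_r$ or not. Consider an admissible $j$-uple $q'_1 < q'_2 < \cdots < q'_j \le P_r$ achieving the minimum. Either $q'_j < P_r$, in which case all $q'_i \le P_{r-1}$ and the same $j$-uple is admissible for $H(j,P_{r-1};m)$ (the constraint \eqref{q'mj} is unchanged since it does not involve $r$), so its product is $\ge H(j,P_{r-1};m)$; or $q'_j = P_r$, in which case $q'_1 < \cdots < q'_{j-1} \le P_{r-1}$ and the inequality \eqref{q'mj} reads $q'_1+\cdots+q'_{j-1} \ge P_{K+1}+\cdots+P_{K+j} - m = P_{K+1}+\cdots+P_{K+j-1} - (m - P_{K+j} + P_r)$, which is exactly the admissibility constraint for $H(j-1,P_{r-1};m-P_{K+j}+P_r)$; hence the product $q'_1\cdots q'_{j-1} \ge H(j-1,P_{r-1};m-P_{K+j}+P_r)$ and $q'_1\cdots q'_j \ge P_r H(j-1,P_{r-1};m-P_{K+j}+P_r)$. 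Thus $H(j,P_r;m)$ is at least the minimum of the two right-hand quantities. Conversely, any $j$-uple witnessing $H(j,P_{r-1};m)$ is admissible for $H(j,P_r;m)$, and appending $P_r$ to any $(j-1)$-uple witnessing $H(j-1,P_{r-1};m-P_{K+j}+P_r)$ yields an admissible $j$-uple for $H(j,P_r;m)$; so $H(j,P_r;m)$ is at most that minimum. This proves \eqref{Hind}.

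The main obstacle, and the point requiring the most care, is the bookkeeping with the convention \eqref{Hinfini}: one must check that \eqref{Hind} remains correct when one or both of the two terms on the right are $+\infty$. If no admissible $j$-uple exists for $H(j,P_r;m)$, then in particular neither branch of the dichotomy produces one — so both $H(j,P_{r-1};m)$ and $H(j-1,P_{r-1};m-P_{K+j}+P_r)$ are $+\infty$ (using that a $(j-1)$-uple in $P_1,\ldots,P_{r-1}$ plus $P_r$ is the only way a $j$-uple can end in $P_r$), and the formula reads $+\infty = \min(+\infty,+\infty)$. Conversely if an admissible $j$-uple exists then at least one branch yields a finite value, matching the finite minimum. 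One should also verify the index ranges are respected (e.g.\ that $j-1 \ge 0$ and $j \le \min(r,R-K)$ propagate correctly), but these are routine. A short remark handling the edge case $m - P_{K+j}+P_r < 0$ — where the shifted argument may be negative and $H(j-1,P_{r-1};\cdot)$ is then $+\infty$ by the observation following \eqref{Hinfini} — completes the verification.
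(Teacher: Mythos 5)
Your proposal is correct and follows essentially the same route as the paper: the base case $j=1$ is read off directly from the definition, and the induction formula comes from the dichotomy on whether the minimizing $j$-uple contains $P_r$ or not, with the sum constraint \eqref{q'mj} shifting to $m-P_{K+j}+P_r$ when $P_r=q'_j$ is stripped off. The paper's own proof is a one-line version of exactly this argument; your additional bookkeeping (both inequality directions, the $+\infty$ conventions, the negative shifted argument) is sound and merely makes explicit what the paper leaves to the reader.
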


\begin{proof}
The calculation of $H(1,P_r;m)$ is easy. Let us show the induction formula
\eqref{Hind}.
Either $P_r$ does not divide $H(j,P_r;m)$ and 
$H(j,P_r;m)\!=\!H(j$, $P_{r-1};m)$ or $P_r=q'_j$ is the greatest 
prime factor of $H(j,P_r;m)=q_1'q_2'\ldots q_j'$ and from \eqref{q'mj}, we get 
$q'_1+\ldots+q'_{j-1}\ge P_{K+1}+\ldots+P_{K+j-1}-(m-P_{K+j}+P_r)$.
\end{proof}

Note that if $m\ge m_j(P_r)$, $m-P_{K+j}+P_r\ge m_{j-1}(P_{r-1})$
since $m_j(P_r)=m_{j-1}(P_{r-1})+P_{K+j}-P_r$ so that 
$H(j,P_r;m)$ and $H(j-1,P_{r-1};m-P_{K+j}+P_r)$ are 
simultaneously finite or infinite. \eqref{mjr} implies that
$m_j(P_r)$ and $m_{j}(P_{r-1})$ are both infinite or
$m_{j}(P_{r-1}) > m_j(P_r)$.
For $m_j(P_r)\le m <m_{j}(P_{r-1})$, \eqref{Hind} reduces to 
\begin{equation}\label{Hindred}
H(j,P_r;m)=P_{r} H(j-1,P_{r-1};m-P_{K+j}+P_r)
\end{equation}
while, for $m\ge m_j(P_{r-1})$, the three  
values of the function $H$ in \eqref{Hind} are finite.

From \eqref{H1}, we may remark that, if we set
\begin{equation}\label{H0}
H(0,P_r;m)=1\quad \text{ for all } r\ge 1 \text{ and } m\ge 0,
\end{equation}
the induction formula \eqref{Hind} still holds for $j=1$.

In view of \eqref{GH}, for $1\le r\le R$, $1\le j\le \min(r,R-K)$ and 
$m_j(P_r)\le m \le M$, we calculate $H(j,P_r;m)$ by induction,
using for that \eqref{H0}, \eqref{Hind} and \eqref{Hindred}. 
If $K+2 \le r\le R$, it is useless to calculate 
$H(j,P_r;m)$ for $j < r-K$.

Finally, after getting the value of $H(R-K,P_R;m)$ for 
$m_{R-K}(P_R)=0\le m\le M$, we compute $G(p_k,m)$ by \eqref{GH}.

%--------------------------------------------------------------------------
\subsection{Bounding the largest prime}\label{bolapr}
%--------------------------------------------------------------------------

It turns out that the largest prime used in the computation of $G(p_k,m)$
for $0\le m\le M$ is much smaller than $P_R$ defined in \eqref{pRpR'}.
For instance, for $p_k=P_K=150989$ and $M=5000$, $R$ defined by \eqref{pRpR'}
is at least equal to $K+425$ while only the primes up to 
$p_{k+5}=P_{K+5}=151027$ are used.

So, the idea is to replace $R$ by a smaller number $\Rh$, $K+1\le \Rh < R$, 
and to calculate by induction $H(\Rh-K,P_{\Rh};m)$ instead of 
$H(R-K,P_R;m)$. We get the fraction
$\Fh=\dfrac{P_{K+1}P_{K+2}\ldots P_{\Rh}}{H(\Rh-K,P_{\Rh};m)}$ 
which satisfies
\dsm{\Fh \le G(p_k,m).} Now we have the following lemma.

\begin{lem}\label{lemFchap}
Let $F$ be a real number satisfying  
$1 < F \le G(p_k,m) = \dfrac{Q_1 Q_2 \dots Q_s}{q_1 q_2 \dots q_s}$.
Then, the largest prime factor $Q_s$ of the numerator of $G(p_k,m)$
is bounded above by
\begin{equation}\label{Qsm}
Q_s \le \min\left(p_k+m, \frac{mF}{F-1}\right)\cdot
\end{equation}
\end{lem}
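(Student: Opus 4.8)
The statement gives two upper bounds for $Q_s$, and I will establish them separately. The first bound, $Q_s \le p_k + m$, is essentially immediate from the constraints defining $\cg(p_k,m)$: since $G(p_k,m) = Q_1 Q_2 \dots Q_s / (q_1 q_2 \dots q_s)$ is an element of $\cg(p_k,m)$ with $s \ge 1$ (the case $s=0$ being excluded by $G(p_k,m) \ge F > 1$), condition \eqref{Qimqi} gives $Q_s - q_s \le \sum_{i=1}^s (Q_i - q_i) \le m$, and condition \eqref{qsQ} gives $q_s \le p_k$, so $Q_s \le q_s + m \le p_k + m$.

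For the second bound, $Q_s \le mF/(F-1)$, the idea is to apply point 2 of Lemma \ref{lemme1} to the fraction $G(p_k,m)$ itself. With $S = \ell(G(p_k,m)) = \sum_{i=1}^s (Q_i - q_i) \le m$, and using the hypotheses \eqref{qsQ} which imply the chain of inequalities \eqref{hyplem1} needed by Lemma \ref{lemme1} (here $s \ge 1$ and $S \le m < p_{k+1} \le Q_1$, so the condition $S < Q_1$ holds), Lemma \ref{lemme1}, point 2, yields
\[
F \le G(p_k,m) = \frac{Q_1 Q_2 \dots Q_s}{q_1 q_2 \dots q_s} \le \frac{Q_s}{Q_s - S}.
\]
Since $t \mapsto t/(t-S)$ is decreasing in $t$ on $(S,\infty)$ and $S \le m$, we also have $Q_s/(Q_s - S) \le Q_s/(Q_s - m)$, hence $F \le Q_s/(Q_s - m)$. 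Solving this inequality for $Q_s$: it rearranges to $F(Q_s - m) \le Q_s$, i.e. $(F-1)Q_s \le Fm$, i.e. $Q_s \le Fm/(F-1)$ since $F > 1$. Combining the two bounds gives \eqref{Qsm}.

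I expect no serious obstacle here; the lemma is a direct consequence of Lemma \ref{lemme1}. The only point requiring a little care is checking that the hypotheses of Lemma \ref{lemme1} are genuinely met — specifically that $s \ge 1$ (guaranteed by $F > 1$, which forces $G(p_k,m) > 1$ and hence a nontrivial fraction) and that $S < Q_1$ (guaranteed by $S \le m \le p_{k+1} - 3 < p_{k+1} \le Q_1$ from the definition's constraint $0 \le m \le p_{k+1}-3$ together with \eqref{qsQ}). One should also note that when $S = m$ the inequality $F \le Q_s/(Q_s-m)$ already follows directly from Lemma \ref{lemme1}, and when $S < m$ the monotonicity step is what bridges the gap; in either case the conclusion is the same.
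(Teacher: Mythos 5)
Your proof is correct and follows essentially the same route as the paper: the bound $Q_s\le p_k+m$ comes from $Q_s-q_s\le m$ and $q_s\le p_k$, and the bound $Q_s\le mF/(F-1)$ comes from Lemma \ref{lemme1}, point 2, via $F\le Q_s/(Q_s-m)$. One immaterial nitpick: the inequality $Q_s/(Q_s-S)\le Q_s/(Q_s-m)$ follows from $S\le m<Q_s$ because the map $u\mapsto Q_s/(Q_s-u)$ is increasing in the subtracted quantity $u$, not from the decrease of $t\mapsto t/(t-S)$ in $t$ (there $Q_s$ is fixed), but the inequality itself is obviously true.
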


\begin{proof}
Using Lemma 1 and \eqref{lGm}, we write
\[
F\le G(p_k,m)=\frac{Q_1 Q_2 \dots Q_s}{q_1 q_2\dots q_s}
\le\frac{Q_s}{Q_s - \ell(G(p_k,m))}\le\frac{Q_s}{Q_s - m}
\]
which yields \dsm{Q_s \le \frac{mF}{F-1}}. On the other hand,
Inequality \eqref{qsQ} together with \eqref{Qimqi} implies
$Q_s - p_k \le Q_s - q_s \le m$ which completes the proof of
\eqref{Qsm}.
\end{proof}

If $\Fh=\dfrac{P_{K+1}P_{K+2}\ldots P_{\Rh}}{H(\Rh-K,P_{\Rh};m)} > 1$
and if $P_{\Rh} >  \min\left(P_K+m, \dfrac{m\Fh}{\Fh-1}\right)$, it follows 
from Lemma \ref{lemFchap} that 
$G(p_k,m)=\Fh$. If not, we start again by choosing a new value of  
$P_{\Rh}$  greater than 
$\min\left(P_K+m,\dfrac{m\Fh}{\Fh-1}\right)$.
Actually, Inequality \eqref{Qsm} gives a reasonably good 
upper bound for $Q_s$.
In the program, our first choice is $\Rh=K+10$.

%--------------------------------------------------------------------------
\subsection{Conclusion}\label{concG}
%--------------------------------------------------------------------------

The running time of the algorithm described in sections \ref{comHG} and
\ref{bolapr} to calculate $G(p,m)$ for $m\le M$
grows about quadratically in $M$, so, it is rather slow when $M$ 
is large.

For instance, the computation of $g(10^{15}-741281)$ leads to the evaluation of
$G(p,688930)$ for $p=192678883$, and this is not doable by the above 
combinatorial algorithm.

In the next section, we present a faster algorithm to compute $G(p_k,m)$ when 
$m$ is large, but which does not work for small $m$'s so that the 
two algorithms are complementary.

%--------------------------------------------------------------------------
\section{Computation of $G(p_k,m)$ 
for $m$ large}\label{Glarge}
%--------------------------------------------------------------------------

The algorithm described in this section starts from the following 
two facts:
\begin{itemize}
\item
if 
$G(p_k,m)=\dfrac{Q_1Q_2\ldots Q_s}{q_1q_2\ldots q_s}$ and $m$ is large,
the least prime factor $q_s$ of the denominator is close to $p_{k+1}-m$
while all the other primes $Q_1,\ldots, Q_s,q_1,\ldots,$ $q_{s-1}$ 
are close to $p_k$. More precisely, $G(p_k,m)$ is equal to
$\dfrac{p_{k+1}}{q_s}G(p_{k+1},d)$ where $d=m-p_{k+1}+q_s$ is small. 

Note that when $m$ is small $G(p_k,m)$ is not always equal to\\
$\dfrac{p_{k+1}}{q_s}G(p_{k+1},m-p_{k+1}+q_s)$. For instance,
$G(103,22)=\dfrac{107\times 113}{97\times 101}$ while $G(107,12)=\dfrac{109}{97}
< \dfrac{113}{101}\cdot$ 

\item
In \eqref{Gmin1}, we have seen that $\ell(G(p,m))=m$ implies
$G(p,m)\ge 1+\frac{m}{p_k}$, and it turns out that this last
inequality seems to hold for $m$ large enough. 
\end{itemize}

%--------------------------------------------------------------------------
\subsection{A second way to compute $G(p_k,m)$}\label{newalgo}
%--------------------------------------------------------------------------

We want to compute $G(p_k,m)$ for a large $m$.
The following proposition says that if, for some small $\delta$,
$p_k-m + \delta$ is prime and such that $G(p_{k+1},\delta)$
is not too small, 
then the computation of $G(p_{k},m)$ is reduced to the computation
of $G(p_{k+1},m')$ for few small values of $m'$.

\begin{prop}\label{Gnew}
We want to compute $G(p_k,m)$ as defined in \eqref{G1} or \eqref{G}
with $p_k$ odd and $p_{k+1}-p_k\le m \le p_{k+1}-3$. We 
assume that we know some even non-negative integer $\de$ satisfying
\begin{equation}\label{dp}
p_{k+1}+\de-m \quad\text{ is prime, }
\end{equation}
\begin{equation}\label{di}
G(p_{k+1},\de) \ge 1+ \frac{\de}{p_{k+1}}
\end{equation}
and
\begin{equation}\label{dm}
\de < \frac{2m}{9} < \frac{2p_{k+1}}{9}\cdot
\end{equation}
If $\de=0$, we know from Proposition \ref{propgpk} that 
$G(p_k,m)=\dfrac{p_{k+1}}{p_{k+1}-m}\cdot$ If $\de > 0$, we have
\begin{equation}\label{Gq^}
G(p_k,m)=\max_{\substack{q\;\text{ prime}\\p_{k+1}-m\;\le \;q\;\le \;\qh}} \;\;
\frac{p_{k+1}}{q} \;\,G(p_{k+1},m-p_{k+1}+q),
\end{equation}
where $\qh$ is defined by
\begin{equation}\label{qdef}
\qh=\frac{p_{k+1}p_{k+2}(p_{k+1}-m+\de)}{(p_{k+1}+\de)(p_{k+1}-3\de/2)}
\le p_{k+2}-m+\frac{3\de}{2}\cdot
\end{equation}
\end{prop}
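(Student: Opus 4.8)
The plan is to start from the defining expression $G(p_k,m)=\dfrac{Q_1\ldots Q_s}{q_1\ldots q_s}$ realizing the maximum in \eqref{G1}, and analyze the least prime factor $q_s$ of the denominator. Under the hypotheses \eqref{dp}--\eqref{dm}, I want to show two things: first, that $q_s\ge p_{k+1}-m$ (immediate from \eqref{qsQ} and \eqref{Qimqi}, since $p_{k+1}-q_s\le Q_1-q_s\le\sum_i(Q_i-q_i)\le m$); second, that $q_s\le\qh$, where $\qh$ is the explicit bound in \eqref{qdef}. Once $q_s$ is pinned to the interval $[p_{k+1}-m,\qh]$, I factor off the ratio $p_{k+1}/q_s$: the remaining fraction $\dfrac{Q_1\ldots Q_s}{q_1\ldots q_{s-1}}$ uses only primes $\ge p_{k+1}$ in the numerator and primes in $[p_{k+2},p_k]\cup\{\text{possibly }q_{s-1}\le p_k\}$... more precisely, after removing $p_{k+1}$ from the numerator and $q_s$ from the denominator, what is left is an admissible fraction for $G(p_{k+1},\cdot)$ with benefit parameter $m'=m-(p_{k+1}-q_s)=m-p_{k+1}+q_s$. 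This gives $G(p_k,m)\le\max_q \frac{p_{k+1}}{q}G(p_{k+1},m-p_{k+1}+q)$; the reverse inequality is clear since any such product is a valid competitor in \eqref{G1} (one checks the prime constraints \eqref{qsQ} and the sum constraint \eqref{Qimqi} are met when $q\ge p_{k+1}-m$). So the crux is the upper bound $q_s\le\qh$.

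**The key estimate.** To bound $q_s$, write $G(p_k,m)=\dfrac{p_{k+1}}{q_s}\cdot F'$ where $F'=\dfrac{Q_1\ldots Q_s}{q_1\ldots q_{s-1}}\cdot\dfrac{1}{p_{k+1}}$ is built from the remaining primes. First I need a \emph{lower} bound for $G(p_k,m)$ to force $q_s$ down: using \eqref{di} and the fact that $\dfrac{p_{k+1}}{p_{k+1}-m+\de}\,G(p_{k+1},\de)$ is an admissible competitor in $\cg(p_k,m)$ — indeed take $q_s=p_{k+1}-m+\de$ (prime by \eqref{dp}), $Q_1=p_{k+1}$, and then the primes of $G(p_{k+1},\de)$; the sum constraint reads $(p_{k+1}-q_s)+\ell(G(p_{k+1},\de))\le (m-\de)+\de=m$ using \eqref{lGm} — I get
\[
G(p_k,m)\ge\frac{p_{k+1}}{p_{k+1}-m+\de}\left(1+\frac{\de}{p_{k+1}}\right)=\frac{p_{k+1}+\de}{p_{k+1}-m+\de}.
\]
Next I need an \emph{upper} bound for the cofactor $F'$. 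Since $F'$ is a ratio of primes from $\{p_{k+2},p_{k+3},\ldots\}$ over primes $\le p_k$ with total displacement $\sum(Q_i-q_i)-(p_{k+1}-q_s)\le m-(p_{k+1}-q_s)$ and smallest numerator prime $\ge p_{k+2}$, Lemma \ref{lemme1}~(2) gives $F'\le\dfrac{p_{k+2}}{p_{k+2}-(m-p_{k+1}+q_s)}$, provided this is valid (i.e.\ the displacement is $<p_{k+2}$, which \eqref{dm} should guarantee). Combining, $G(p_k,m)=\dfrac{p_{k+1}}{q_s}F'\le\dfrac{p_{k+1}}{q_s}\cdot\dfrac{p_{k+2}}{p_{k+2}-m+p_{k+1}-q_s}$. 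Setting this $\ge\dfrac{p_{k+1}+\de}{p_{k+1}-m+\de}$ and solving the resulting rational inequality for $q_s$ yields, after simplification, $q_s\le\dfrac{p_{k+1}p_{k+2}(p_{k+1}-m+\de)}{(p_{k+1}+\de)(p_{k+1}-3\de/2)}=\qh$. Here the term $p_{k+1}-3\de/2$ must come from bounding a quantity like $p_{k+1}+q_s-m-\ldots$ from below using $q_s\ge p_{k+1}-m$ and the constraint $\de<2m/9$; I expect the $3\de/2$ to emerge precisely from $p_{k+1}-m+q_s-\ldots$ estimates where $q_s\ge p_{k+1}-m+\de$ would be too strong, so one uses $q_s\ge p_{k+1}-m$ plus a controlled slack.

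**The second inequality in \eqref{qdef}.** Separately I must verify $\qh\le p_{k+2}-m+\tfrac{3\de}{2}$. This is a purely algebraic comparison between two explicit expressions in $p_{k+1},p_{k+2},m,\de$; writing $p_{k+2}=p_{k+1}+g$ with $g=p_{k+2}-p_{k+1}>0$ and clearing denominators, it should reduce to an inequality that holds because $\de<2m/9<2p_{k+1}/9$ makes the quadratic-in-$\de$ error terms negligible against the leading ones. I would handle this by cross-multiplying and collecting terms in $\de$, checking the coefficient signs under $\de<2m/9$.

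**Main obstacle.** The routine parts are the two admissibility checks (competitor constructions) and the final reverse-inequality direction. The genuine difficulty is the derivation of the exact closed form $\qh=\dfrac{p_{k+1}p_{k+2}(p_{k+1}-m+\de)}{(p_{k+1}+\de)(p_{k+1}-3\de/2)}$: one must track which primes are available for the cofactor $F'$ (in particular whether $q_{s-1}$ can equal $p_k$ or must be strictly smaller, and whether $p_{k+2}$ is free to appear in the numerator), apply Lemma \ref{lemme1}~(2) with exactly the right smallest-numerator-prime $p_{k+2}$, and then massage the rational inequality so that the hypothesis $\de<2m/9$ is exactly what is needed to make the denominator factor $p_{k+1}-3\de/2$ positive and the bound meaningful. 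Getting the constant $3/2$ (rather than something weaker) right is where the care lies; I expect it reflects a two-step application where one first bounds $q_s$ crudely, substitutes back, and re-bounds.
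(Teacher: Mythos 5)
Your overall architecture matches the paper's: a lower bound $G(p_k,m)\ge \frac{p_{k+1}+\de}{p_{k+1}+\de-m}$ via the competitor $\frac{p_{k+1}}{p_{k+1}+\de-m}G(p_{k+1},\de)$, a factorization $G(p_k,m)=U\cdot\frac{p_{k+1}}{q_s}$ with $U\le \frac{p_{k+2}}{p_{k+2}-\ell(U)}$ from Lemma \ref{lemme1}, and the observation that every $\frac{p_{k+1}}{q}G(p_{k+1},m-p_{k+1}+q)$ is an admissible competitor. But the central step is not carried out, and as written it fails. When you combine
\[
\frac{p_{k+1}+\de}{p_{k+1}+\de-m}\;\le\;\frac{p_{k+1}}{q_s}\cdot\frac{p_{k+2}}{p_{k+2}-(m-p_{k+1}+q_s)}
\]
and ``solve for $q_s$'', you are imposing $q_s\,(p_{k+1}+p_{k+2}-m-q_s)\le \frac{p_{k+1}p_{k+2}(p_{k+1}+\de-m)}{p_{k+1}+\de}$, i.e.\ $E(q_s)\ge 0$ for the quadratic $E$ of the paper's Lemma \ref{eq2}. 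The solution set is $q_s\le X_1$ \emph{or} $q_s\ge X_2$, two branches, and only the first gives the desired conclusion (one checks $X_1\le\qh$ using $X_2\ge p_{k+1}-3\de/2$). The second branch cannot be dismissed: since $\de\ge p_{k+2}-p_{k+1}$ and $X_2$ can drop as low as $p_{k+1}-\frac{3\de}{2}$, which may well be below $p_k\ge q_s$, the configuration ``$q_s$ large and some other factor $Q_i/q_i$ absorbing the displacement'' is not excluded by any of the inequalities you invoke. Ruling it out is precisely the content of the paper's Lemma \ref{Gtau}, points 1--4: one first bounds $Q_s\le p_{k+1}+\de$, then shows each partial sum $\La_j=\sum_{i\le j}(Q_i-q_i)$ satisfies $p_{k+2}-\La_j\ge X_2$ by an induction that derives a contradiction with $m>\frac{9\de}{2}$ if any $\La_j$ ever jumps to the wrong root, and finally concludes $p_{k+2}-\ell(U)\ge X_2\ge p_{k+1}-\frac{3\de}{2}$, whence $q_s=p_{k+1}U/F\le\qh$. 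This is where the constant $3/2$ and the hypothesis $\de<\frac{2m}{9}$ are actually consumed; your proposal acknowledges uncertainty about exactly this point, which is the symptom of the missing argument.

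A secondary, fixable gap: for the easy direction $G(p_k,m)\ge\frac{p_{k+1}}{q}G(p_{k+1},m-p_{k+1}+q)$ you assert admissibility ``when $q\ge p_{k+1}-m$'', but membership in $\cg(p_k,m)$ requires the new denominator prime $q$ to be \emph{strictly below} every denominator prime $q'$ of $G(p_{k+1},m-p_{k+1}+q)$ (otherwise a prime could repeat in the denominator). The paper verifies $q'-q\ge m-4\de>0$, and this uses the \emph{upper} bound $q\le\qh\le p_{k+2}-m+\frac{3\de}{2}$ together with \eqref{dm}; the lower bound on $q$ alone does not suffice. Your treatment of the algebraic inequality $\qh\le p_{k+2}-m+\frac{3\de}{2}$ by cross-multiplication is the same as the paper's and is fine.
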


Before proving Proposition \ref{Gnew} in Section \ref{secdeg}, we
shall first think to the possibility of applying it to compute
$G(p_k,m)$.

%--------------------------------------------------------------------------
\subsection{Large differences between consecutive primes}\label{diffprem}
%--------------------------------------------------------------------------

For $x\ge 3$, let us define
\begin{equation}\label{Deltapr}
\De(x)=\max_{p_j\le x} (p_j-p_{j-1}).
\end{equation} 
Below, we give some values of $\De(x)$:
\[
\begin{array}{|r|ccccccccccc|}
\hline
x&10^2&10^3&10^4&10^5&10^6&10^7&10^8&10^9&10^{10}&10^{11}&10^{12}\\[1mm]
\hline
\De(x)&8&20&36&72&114&154&220&282&354&464&540\\[1mm]
\hline
(\log x)^2&21&48&85&133&191&260&339&429&530&642&763\\
\hline
\end{array}
\]
A table of $\De(x)$ up to $4\cdot 10^{12}$ calculated by D. Shanks, 
L.J. Lander, T.R. Parkin and R. Brent can be found in \cite{Rie}, p. 85.
There is a longer table (up to $8\cdot 10^{16}$) on the web site 
\cite{Nicely}. H. Cram\'er 
conjectured in \cite{Cra} that $\lim_{x\to\iy} \frac{\De(x)}{(\log x)^2}=1$.
For $x\le 8\cdot 10^{16}$, $\De(x)\le 0.93 (\log x)^2$ holds.

\medskip

Let us set $\De=\De(p_{k+1})$; let us denote by $\de_1=\de_1(p_k)$ the 
smallest even integer such that $\de_1\ge \De$ and 
\begin{equation}\label{m1}
G(p_{k+1},d) \ge 1+\frac{d}{p_{k+1}},\quad  
d=\de_1-\De+2,\de_1-\De+4, \ldots, \de_1.
\end{equation}
By using the combinatorial algorithm described in
\ref{comHG}, we have computed that for all primes $p_k\le 3\cdot 10^8$, we have
$\de_1(p_k)\le 900=\de_1(252314747)$ and
\begin{equation}\label{m1maj}
\de_1(p_k)\le 2.55(\log p_k)^2.
\end{equation}
To compute the suffix of $g(n)$ for $n\le 10^{15}$, we do not have to deal
with larger values of $p_k$.
However, for larger $p_k$'s, we conjecture that $\de_1(p_k)$ exists and 
is not too large.

\begin{lem}\label{lemdel}
Let $p_k$ satisfy $5\le p_k\le 3\cdot 10^8$, $m$ be an even integer such that
$p_{k+1}-p_k\le m\le p_{k+1}-3$, and $\de_1=\de_1(p_k)$ defined by \eqref{m1}.
If $m\ge \frac 92 \de_1(p_k)$, then there exists an even non-negative integer
\begin{equation}\label{dede1}
\de=\de(p_k,m)\le \de_1(p_k)\le 2.55(\log p_k)^2
\end{equation}
such that \eqref{dp}, \eqref{di} and \eqref{dm} hold. Therefore, Proposition
\ref{Gnew} can be applied to compute $G(p_k,m)$.
\end{lem}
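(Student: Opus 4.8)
The goal is to exhibit an even $\de \in [0, \de_1(p_k)]$ for which the three hypotheses \eqref{dp}, \eqref{di}, \eqref{dm} of Proposition \ref{Gnew} hold. The plan is to look for $\de$ in the arithmetic progression $\de_1 - \De + 2, \de_1 - \De + 4, \ldots, \de_1$, i.e.\ among the last $\De/2$ even candidates below $\de_1$, where $\De = \De(p_{k+1})$ is the maximal prime gap up to $p_{k+1}$ as in \eqref{Deltapr}. First I would consider the integer $p_{k+1} - m$; since $p_{k+1} - p_k \le m$ we have $p_{k+1} - m \le p_k$, and since $m \le p_{k+1} - 3$ we have $p_{k+1} - m \ge 3$, so $p_{k+1} - m$ lies in $[3, p_k]$. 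Let $q^\ast$ be the smallest prime with $q^\ast \ge p_{k+1} - m$; because the largest gap among primes $\le p_{k+1}$ is $\De$, we get $0 \le q^\ast - (p_{k+1} - m) \le \De - 2$ (the gap is even for $p_k \ge 5$, and strict unless $p_{k+1}-m$ is already prime). Now set $\de := q^\ast - (p_{k+1} - m) + \De'$ where $\De'$ is the unique quantity making $\de$ land in the required progression — more precisely, I would define $\de$ by adjusting $q^\ast$: since $q^\ast \le p_k < p_{k+1}$, choosing $\de$ so that $p_{k+1} + \de - m = q^\ast$ gives $\de = q^\ast - p_{k+1} + m$, which is even (as $p_{k+1}$ is odd, $m$ is even, $q^\ast$ is odd for $q^\ast \ge 5$) and satisfies $0 \le \de \le \De - 2 \le \De$. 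This immediately yields \eqref{dp}.

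The delicate point is \eqref{di}, namely $G(p_{k+1}, \de) \ge 1 + \de/p_{k+1}$. Here is where the definition of $\de_1(p_k)$ via \eqref{m1} enters. By construction, \eqref{m1} guarantees that $G(p_{k+1}, d) \ge 1 + d/p_{k+1}$ holds for every even $d$ in the window $[\de_1 - \De + 2, \de_1]$. So I must ensure the $\de$ produced above falls in this window. The lower end is automatic once we observe $\de \ge 0$ and, when $\de < \de_1 - \De + 2$, we can simply replace $\de$ by a larger admissible value: if $\de = q^\ast - p_{k+1} + m$ is too small, I would instead take $\de' = \de + \De$ (still even), note that $p_{k+1} + \de' - m = q^\ast + \De$, and — this is the subtle sub-step — argue that there is a prime in $(q^\ast, q^\ast + \De]$, which holds because $q^\ast \le p_k$ so the next prime after $q^\ast$ differs by at most $\De(p_{k+1}) = \De$. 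Iterating, we slide $\de$ upward in steps whose images $p_{k+1} + \de - m$ hit consecutive primes, and we land in the window $[\de_1 - \De + 2, \de_1]$ after finitely many steps precisely because that window has length $\De$ and each step moves $\de$ by at most $\De$; hence some admissible $\de$ with a prime $p_{k+1}+\de-m$ lies in it. For that $\de$, \eqref{di} follows from \eqref{m1}, and $\de \le \de_1(p_k) \le 2.55(\log p_k)^2$ follows from \eqref{m1maj}, giving \eqref{dede1}.

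Finally, \eqref{dm} asks that $\de < \tfrac{2m}{9} < \tfrac{2p_{k+1}}{9}$. The second inequality is clear since $m \le p_{k+1} - 3 < p_{k+1}$. For the first, we use the hypothesis $m \ge \tfrac 92 \de_1(p_k)$, which rearranges to $\tfrac{2m}{9} \ge \de_1(p_k) \ge \de$; a strict inequality $\de < \tfrac{2m}{9}$ is obtained because, in the boundary case, one can take the smaller admissible value of $\de$ (the sliding procedure always offers a choice $\le \de_1$, and when $\de = \de_1$ forces equality we note $m > \tfrac 92 \de_1$ is not assumed, but $\de \le \de_1 \le \tfrac 29 m$ already gives the non-strict bound, and strictness is recovered unless $\de_1 = \tfrac 29 m$ exactly, a case handled by halving the gap as in the remark before Lemma \ref{lemdel} or by noting $\de$ can be chosen strictly below $\de_1$). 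With \eqref{dp}, \eqref{di}, \eqref{dm} all verified, Proposition \ref{Gnew} applies, completing the proof. The main obstacle is the sliding argument for \eqref{di}: one must carefully check that moving $\de$ through values whose images are consecutive primes never overshoots the window $[\de_1 - \De + 2, \de_1]$, which is exactly why that window was given width $\De = \De(p_{k+1})$ in the definition \eqref{m1}.
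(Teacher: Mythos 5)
Your argument is correct and uses exactly the same two ingredients as the paper's proof: the window $\{\de_1-\De+2,\de_1-\De+4,\dots,\de_1\}$ built into the definition \eqref{m1}, and the maximal prime gap $\De=\De(p_{k+1})$. Only the mechanism differs. You start at the bottom (the first prime $q^\ast\ge p_{k+1}-m$ gives an initial even $\de\le\De-2$) and then slide $\de$ upward through consecutive primes, checking that no step of size at most $\De$ can jump from below $\de_1-\De+2$ to above $\de_1$. The paper does it in a single step from the top: set $a=p_{k+1}+\de_1-m$, observe that $a$ is odd and satisfies $\De+3\le a\le p_{k+1}-\tfrac72\de_1<p_{k+1}$ (the upper bound is where $m\ge\tfrac92\de_1$ is used), and take the largest prime $a-b\le a$; then $b$ is even with $0\le b\le\De-2$, so $\de=\de_1-b$ lands directly in the window and $p_{k+1}+\de-m=a-b$ is prime. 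The one-step version dispenses with your termination and no-overshoot bookkeeping, and makes it immediate that all primes involved stay below $p_{k+1}$ so that the gap bound $\De$ really applies --- a point your later iterations leave implicit (it does hold, since every prime you touch is at most $p_{k+1}+\de_1-m<p_k$). Finally, note that both arguments only yield $\de\le\de_1\le\tfrac{2m}{9}$ while \eqref{dm} is stated with strict inequality; the boundary case $\de=\de_1=\tfrac{2m}{9}$ is glossed over in the paper too, and you at least flag it.
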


\begin{proof}
Let us set $a=p_{k+1}+\de_1(p_k)-m$. We have
\[
a=p_{k+1}+\de_1(p_k)-m
\le p_{k+1}-\frac{7}{2}\de_1(p_k) 
\le p_{k+1}-\frac{7}{2}\Delta
< p_{k+1}. 
\] 
Since $\de_1\ge \De$ and $m\le p_{k+1}-3$, $a\ge \De+3$ holds.
From the definition of 
$\De=\De(p_{k+1})$, there exists an even number $b$, $0\le b\le\De-2$
such that $a-b=p_{k+1}-m+(\de_1-b)$ is prime. From the definition 
of $\de_1(p_k)$,
we know that $G(p_{k+1},\de_1-b) \ge 1+\frac{\de_1-b}{p_{k+1}}$. Therefore, 
$\de=\de_1-b$ satisfies \eqref{dp}, \eqref{di}, \eqref{dm} and
$0\le \de \le \de_1(p_k)$.
The last upper bound of \eqref{dede1} follows from \eqref{m1maj}.
\end{proof}

%--------------------------------------------------------------------------
\subsection{Proof of Proposition \ref{Gnew}}\label{Gproof}
%--------------------------------------------------------------------------
\subsubsection*{A polynomial equation of degree $2$}\label{secdeg}
%--------------------------------------------------------------------------

\begin{lem}\label{eq2}
Let us consider real numbers $T_1,T_2,\de$ satisfying
\begin{equation}\label{Qeq}
0<T_1 < T_2
\end{equation}
and
\begin{equation}\label{deleq}
(\de=0\; \text{ or } \; \de\ge T_2-T_1) \quad \text{  and }\quad 
\de<\frac{2T_1}{9}\cdot
\end{equation}
Note that \eqref{Qeq} and \eqref{deleq} imply 
\begin{equation}\label{taueq}
T_1+\de \le \frac{T_1T_2}{T_2-\de}\cdot
\end{equation}
Let $m$ be a parameter satisfying
\begin{equation}\label{meq}
0\le \frac{9\de}{2} \le m < T_1.
\end{equation}
We set
\begin{equation}\label{Eeq}
E(X)=X^2-(T_1+T_2-m)X+\frac{T_1T_2(T_1+\de-m)}{T_1+\de}\cdot
\end{equation}
\begin{enumerate}
\item
The equation $E(X)=0$ has two roots $X_1$ and $X_2$ satisfying 
\begin{equation}\label{X1X2}
0 < X_1 < \frac{T_1+T_2-m}{2} < X_2 \le T_2-\de.
\end{equation}
\item
 For $T_1,T_2$ and $\de$ fixed and $m$ in the range 
\eqref{meq}, $X_2$ is a non-decreasing function of $m$. 

\item
We have
\begin{equation}\label{X2min}
T_1-\frac{3\de}{2} < \frac{T_1+2T_2}{3}-\frac{3\de}{2}\le  X_2\le T_2-\de.
\end{equation}
\item Let $Y_1$ and $Y_2$ be two positive real numbers satisfying
\begin{equation}\label{Y1eq}
Y_1 < Y_2,\quad Y_1+Y_2=T_1+T_2-m\;\text{ and }\; \frac{T_1T_2}{Y_1Y_2} \ge 
\frac{T_1+\de}{T_1+\de-m}\cdot
\end{equation}
We have
\begin{equation}\label{Y3eq}
Y_2 \ge X_2\ge T_1-\frac{3\de}{2} \qquad \text{ and } \qquad Y_1 \le X_1\le
T_2-m+\frac{3\de}{2}. 
\end{equation}
\end{enumerate}
\end{lem}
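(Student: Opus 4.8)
The plan is to treat $E(X)=0$ as an ordinary quadratic and extract all four conclusions from the two standard facts: the sum of the roots is $T_1+T_2-m$ and the product is $\frac{T_1T_2(T_1+\de-m)}{T_1+\de}$. First I would establish \eqref{taueq}: if $\de=0$ it is an equality, and if $\de\ge T_2-T_1$ then $\de(T_1+\de)\ge \de T_2$, i.e. $T_1\de+\de^2\ge \de T_2$, which rearranges to $(T_1+\de)(T_2-\de)\le T_1 T_2$; dividing by $T_2-\de>0$ (positive because $\de<2T_1/9<T_2$) gives \eqref{taueq}. This inequality is what makes the ``target prime'' $q_s$ admissible later, and it will also be the engine behind the bound $X_2\le T_2-\de$.

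For part 1, I would show the discriminant is positive by exhibiting a sign change of $E$. Since the product of the roots is positive and less than $T_1 T_2$ (as $T_1+\de-m<T_1+\de$), and the sum is $T_1+T_2-m>0$ (using $m<T_1$), both roots are positive as soon as they are real. To see they are real and to locate them I would evaluate $E$ at $X=T_2-\de$: using \eqref{taueq} in the form $T_1+\de-m\le \frac{(T_2-\de)(T_1-m)}{\ldots}$ — more directly, compute $E(T_2-\de)=(T_2-\de)^2-(T_1+T_2-m)(T_2-\de)+\frac{T_1T_2(T_1+\de-m)}{T_1+\de}$ and show it is $\le 0$; the identity $E(T_2-\de)=(T_2-\de)(\de-T_1+m)+\frac{T_1T_2(T_1+\de-m)}{T_1+\de}$ combined with \eqref{taueq} and $m\ge 9\de/2$ should make this routine. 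Since $E(X)\to+\infty$ and $E$ is negative somewhere in $(0,\infty)$, there are two real roots straddling that point, hence straddling the vertex $\frac{T_1+T_2-m}{2}$; and $X_2\le T_2-\de$ because $E(T_2-\de)\le 0$ forces $T_2-\de$ to lie between the roots (or equal to $X_2$), so $X_2\le T_2-\de$. Part 2 is immediate by differentiating: $\frac{\partial}{\partial m}$ of the larger root $X_2=\frac{T_1+T_2-m}{2}+\sqrt{\left(\frac{T_1+T_2-m}{2}\right)^2-\frac{T_1T_2(T_1+\de-m)}{T_1+\de}}$ — one checks both the linear term and the term under the radical behave so that $X_2$ does not decrease; alternatively, compare $E$ at two values of $m$ at a fixed $X$ near $X_2$.

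For part 3, the upper bound $X_2\le T_2-\de$ is already in hand. For the lower bound I would evaluate $E$ at $X=\frac{T_1+2T_2}{3}-\frac{3\de}{2}$ and show $E\le 0$ there, which forces this point to lie in $[X_1,X_2]$, and since it exceeds the vertex (a short check using $m\ge 9\de/2$ and $m<T_1$, so that $\frac{T_1+2T_2}{3}-\frac{3\de}{2}>\frac{T_1+T_2-m}{2}$), it must be $\le X_2$; the strict inequality $T_1-\frac{3\de}2<\frac{T_1+2T_2}{3}-\frac{3\de}2$ is just $T_2>T_1$. Finally, part 4: the hypotheses \eqref{Y1eq} say exactly that $Y_1,Y_2$ have the same sum as $X_1,X_2$ but product $Y_1Y_2\le \frac{T_1T_2(T_1+\de-m)}{T_1+\de}=X_1X_2$. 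By Principle 1 (the product of two numbers of fixed sum decreases as their spread increases), $Y_1\le X_1<X_2\le Y_2$; combining with parts 1 and 3 gives $Y_2\ge X_2\ge T_1-\frac{3\de}2$ and $Y_1\le X_1\le T_2-\de$. The claimed bound $Y_1\le T_2-m+\frac{3\de}2$ then follows from $Y_1=T_1+T_2-m-Y_2\le T_1+T_2-m-(T_1-\frac{3\de}2)=T_2-m+\frac{3\de}2$.

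The main obstacle will be the two sign verifications $E(T_2-\de)\le 0$ and $E\bigl(\tfrac{T_1+2T_2}{3}-\tfrac{3\de}2\bigr)\le 0$: these are where all the numerical slack in the hypotheses — $\de\ge T_2-T_1$ (or $\de=0$), $\de<2T_1/9$, and especially $m\ge 9\de/2$ — gets consumed, and one must be careful that the quadratic-in-$\de$ or quadratic-in-$m$ estimates really do have the right sign over the whole admissible range rather than only asymptotically. Everything else is bookkeeping with Vieta's formulas and Principle 1.
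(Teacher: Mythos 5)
Your overall strategy (Vieta's formulas plus sign evaluations of $E$) is the right one and matches the paper's in spirit, and your treatments of parts 2 and 4 are essentially the paper's arguments. But part 1 contains a genuine error. The sign of $E(T_2-\de)$ is the opposite of what you claim: one has
\[
E(T_2-\de)=(T_2-\de)\bigl((T_2-\de)-(T_1+T_2-m)\bigr)+\frac{T_1T_2(T_1+\de-m)}{T_1+\de}
=(T_1+\de-m)\left(\frac{T_1T_2}{T_1+\de}-(T_2-\de)\right),
\]
which is $\ge 0$ by \eqref{taueq} and $m<T_1$ (your intermediate identity has a sign slip: the linear factor is $m-\de-T_1$, not $\de-T_1+m$). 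Moreover your logical inference is backwards: if $E(c)\le 0$ then $c$ lies \emph{between} the roots, which gives $X_2\ge c$, not $X_2\le c$. The correct deduction, which is the paper's, is that $E(T_2-\de)\ge 0$ together with $T_2-\de\ge\frac{T_1+T_2-m}{2}$ (the vertex) forces $X_2\le T_2-\de$. As written, your part 1 therefore proves neither the upper bound on $X_2$ nor the existence of two real roots, since the point where you hoped $E$ would be negative is in fact a point where it is nonnegative.

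The realness of the roots has to come from somewhere else. The paper gets it from an explicit lower bound on the discriminant,
\[
D=(m+T_2-T_1)^2\left[1-\frac{4\de}{m}\cdot\frac{m^2T_2}{(m+T_2-T_1)^2(T_1+\de)}\right]
\ge (m+T_2-T_1)^2\Bigl(1-\frac{4\de}{m}\Bigr)\ge\frac{(m+T_2-T_1)^2}{9},
\]
using $\de=0$ or $\de\ge T_2-T_1$ and $m\ge 9\de/2$; this same bound, evaluated at $m=9\de/2$ and combined with the monotonicity of part 2, yields the lower bound $X_2\ge\frac{T_1+2T_2}{3}-\frac{3\de}{2}$ of part 3 with no further algebra. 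Your alternative for part 3 (showing $E\bigl(\frac{T_1+2T_2}{3}-\frac{3\de}{2}\bigr)\le 0$ directly) is a viable route that would incidentally also prove realness, but it is exactly the computation you defer as ``the main obstacle,'' so in its present form the proposal leaves both the existence of $X_1,X_2$ and the bound $X_2\le T_2-\de$ unproved. To repair it: prove $D>0$ via the displayed estimate, note the vertex is $\frac{T_1+T_2-m}{2}\le T_2-\de$ (using $T_1<T_2$ and $m\ge 2\de$), and conclude $X_2\le T_2-\de$ from $E(T_2-\de)\ge 0$.
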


\begin{proof}
\begin{enumerate}
\item  The discriminant D of \eqref{Eeq} can be written as
\begin{eqnarray}\label{D2eq}
D &=&(T_1+T_2-m)^2-4\frac{T_1T_2(T_1+\de-m)}{T_1+\de}\notag\\
&=& (m+T_2-T_1)^2\left[1-\frac{4\de}{m}\frac{m^2T_2}{(m+T_2-T_1)^2
    (T_1+\de)} \right], 
\end{eqnarray}
since, from \eqref{Qeq} and \eqref{meq}, $m+T_2-T_1$ does not vanish.
If $\de=0$, the above bracket is $1$ while if $\de\ge T_2-T_1 > 0$, 
the fractions  $\dfrac{T_2}{T_1+\de}$ and 
$\dfrac{m}{m+T_2-T_1}$ are at most $1$, so that in both cases
\eqref{D2eq} yields 
\begin{equation}\label{D3eq}
D \ge (m+T_2-T_1)^2\left[1-\frac{4\de}{m} \right].
\end{equation}
Therefore, from \eqref{meq} and \eqref{Qeq}, 
$D \ge \dfrac{(m+T_2-T_1)^2}{9} > 0$ holds. 

The sum $X_1+X_2$ of the two roots is $T_1+T_2-m$ which explains the
second and the third inequality of \eqref{X1X2}. 
Further, since $T_1 < T_2$ and $m\ge 2\de$,
$\dfrac{T_1+T_2-m}{2}\le T_2-\de$ holds. By \eqref{Eeq}, 
\eqref{meq} and \eqref{taueq},
\[E(T_2-\de)=(T_1+\de-m)\left(\frac{T_1T_2}{T_1+\de}-(T_2-\de)\right) \ge 0\]
which proves the last inequality of \eqref{X1X2}.

\begin{rem}\label{remX2}
If $\de=0$, the roots of \eqref{Eeq} are $X_1=T_1-m$ and $X_2=T_2$.
If $\de=T_2-T_1$, they are $X_1=T_2-m$ and $X_2=T_1$. 
\end{rem}

\item By \eqref{Eeq}, $X_2$ is implicitely defined in terms of $m$
and, through \eqref{taueq}, we have
\[\frac{{d}\,X_2}{{d}\,m}=\frac{-\pd Em}{\pd EX}=
\frac{\frac{T_1T_2}{T_1+\de}-X_2}{2X_2-(T_1+T_2-m)}\ge \frac{T_2-\de-X_2}
{2X_2-(T_1+T_2-m)}\]
which is non-negative from \eqref{X1X2}.

\item
 For $m=\frac{9\de}{2}$, \eqref{D3eq} 
yields $\sqrt D \ge \frac{m+T_2-T_1}{3}=\frac{3\de}{2}+\frac{T_2-T_1}{3}$ and 
\[X_2=\frac{T_1+T_2-m+\sqrt D}{2} \ge \frac{T_1+2T_2}{3}-\frac{3\de}{2}
\ge T_1-\frac{3\de}{2}\cdot\] 
Further, for  $m\ge \frac{9\de}{2}$, the upper bound in
\eqref{X2min} follows from (ii).

\item  
Conditions \eqref{Y1eq} imply 
$E(Y_1)=E(Y_2)=-Y_1Y_2+\dfrac{T_1T_2(T_1+\de-m)}{T_1+\de}\ge 0$
so that $Y_1\le X_1$ and $Y_2\ge X_2$; \eqref{Y3eq} follows from \eqref{X2min}
and from $X_1=T_1+T_2-m-X_2$.
\end{enumerate}
\end{proof}

%--------------------------------------------------------------------------
\subsubsection*{Structure of the fraction $G(p_k,m)$}\label{GTAU}
%--------------------------------------------------------------------------

\begin{lem}\label{Gtau}
Let $k$ and $m$ be  integers such that $k\ge 3$ and 
$p_{k+1}-p_k\le m \le  p_{k+1} -3$. 
We write
\begin{equation}\label{F}
G(p_k,m) = F=\frac{Q_1Q_2\ldots Q_s}{q_1q_2\ldots q_s}
\end{equation}
with $s\ge 1$ and $Q_1,\ldots, Q_s,q_1,\ldots,q_s$ primes satisfying
\begin{equation}\label{Qqs}
3\le q_s <q_{s-1} < \ldots < q_1 \le p_k <p_{k+1} \le  
Q_1 < \ldots Q_{s-1} < Q_s,
\end{equation}
\begin{equation}\label{lF}
p_{k+1}-p_k\le \ell(F)=\sum_{i=1}^s (Q_i-q_i)\le m \le p_{k+1}-3 <p_{k+1}
\end{equation}
and we assume that there exists an integer $\de$ such that
\begin{equation}\label{delta}
0\le \de < \frac{2m}{9},\quad \text{ and }\quad 
(\de=0 \text{ or } \de\ge p_{k+2}-p_{k+1}) 
\end{equation}
and
\begin{equation}\label{Ftau}
F\ge \frac{p_{k+1}+\de}{p_{k+1}-m+\de}\cdot
\end{equation}
We apply Lemma \ref{eq2} with $T_1=p_{k+1}$ and $T_2=p_{k+2}$, 
$\de$ and $m$, and we denote by $X_1$ and $X_2$ the two roots of 
equation \eqref{Eeq}, $E(X)=0$. Then we have
\[
Q_s \le p_{k+1}+\de, \leqno{1.}
\]
\[\text{ for } s\ge 2 \text{ and } 1\le i \le s-1, 
\qquad\la_i\stackrel{def}{=\!=} Q_i-q_i 
\le p_{k+2}-X_2, \leqno{2.}\]
\[\text{ for } s\ge 2 \text{ and } 1\le j \le s-1, 
\qquad \La_j \stackrel{def}{=\!=}\sum_{i=1}^j \la_i 
\le p_{k+2}-X_2. \leqno{3.}\]
Moreover, if we write $F=UV$ with
\begin{equation}\label{UV}
U=\frac{Q_1Q_2\ldots Q_{s-1}Q_s}{q_1q_2\ldots q_{s-1}p_{k+1}}\qquad 
\text{ and } \qquad V=\frac{p_{k+1}}{q_s},
\end{equation}
we have, for $s\ge 1$
\[
\ell(U)=\La_{s-1}+Q_s-p_{k+1} \le p_{k+2}-X_2
\le p_{k+2}-p_{k+1}+\frac{3\de}{2}\leqno{4.}
\]
and
\[p_{k+1}-m\le q_s \le \qh= 
\frac{p_{k+1}p_{k+2}(p_{k+1}-m+\de)}
{(p_{k+1}+\de)(p_{k+1}-3\de/2)}\cdot \leqno{5.}\]
\end{lem}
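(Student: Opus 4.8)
The plan is to apply Lemma~\ref{eq2} with $T_1=p_{k+1}$ and $T_2=p_{k+2}$; write $X_1<X_2$ for the roots of \eqref{Eeq}, so that $X_1+X_2=p_{k+1}+p_{k+2}-m$, $X_1X_2=\dfrac{p_{k+1}p_{k+2}(p_{k+1}-m+\de)}{p_{k+1}+\de}$, and $\qh=X_1X_2/(p_{k+1}-3\de/2)$. First I would verify the hypotheses of Lemma~\ref{eq2}: \eqref{Qeq} is $p_{k+1}<p_{k+2}$, \eqref{deleq} is \eqref{delta} together with $\de<2m/9\le 2(p_{k+1}-3)/9<2p_{k+1}/9$, and \eqref{meq} follows from $0\le 9\de/2<m$ and $m\le p_{k+1}-3<p_{k+1}$. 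Hence $X_1,X_2$ exist and obey \eqref{X1X2} and \eqref{X2min}.

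\emph{Point 1.} Since $\ell(F)\le m<p_{k+1}\le Q_1$, Lemma~\ref{lemme1}~(2.) gives $F\le\dfrac{Q_s}{Q_s-\ell(F)}\le\dfrac{Q_s}{Q_s-m}$. With \eqref{Ftau} this yields $\dfrac{p_{k+1}+\de}{p_{k+1}+\de-m}\le\dfrac{Q_s}{Q_s-m}$, and as $t\mapsto t/(t-m)$ is strictly decreasing on $(m,+\infty)$ we get $Q_s\le p_{k+1}+\de$, which is $1.$ Next I would reduce everything to a bound on $q_s$: write $F=UV$ as in \eqref{UV}, so $V=p_{k+1}/q_s$, $\ell(V)=p_{k+1}-q_s$, $\ell(U)=\ell(F)-(p_{k+1}-q_s)\le m-p_{k+1}+q_s$, and one checks directly that $\ell(U)\ge 0$. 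After cancelling $p_{k+1}$ in the denominator of $U$ against $Q_1$ when $Q_1=p_{k+1}$, the fraction $U$ has all numerator primes $\ge p_{k+2}$ and all denominator primes $\le p_{k+1}$, hence $U\in\cg(p_{k+1},\ell(U))$ and by \eqref{G} and the upper bound in \eqref{Gineq} (Proposition~\ref{propgpk} with $p_{k+1}$ in the role of $p_k$) $U\le G(p_{k+1},\ell(U))\le\dfrac{p_{k+2}}{p_{k+2}-\ell(U)}$. Putting $Y_2=p_{k+1}+p_{k+2}-m-q_s>0$, this gives
\[
F=U\,\frac{p_{k+1}}{q_s}\le\frac{p_{k+1}p_{k+2}}{q_s\,(p_{k+2}-\ell(U))}\le\frac{p_{k+1}p_{k+2}}{q_s\,Y_2},
\]
and comparing with \eqref{Ftau} yields $q_sY_2\le X_1X_2$. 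Since $q_s+Y_2=X_1+X_2$, this reads $(q_s-X_1)(q_s-X_2)\ge 0$, and as $q_s\ge p_{k+1}-m>0$ (from $\ell(U)\ge0$, $\ell(F)\le m$) we conclude $q_s\le X_1$ or $q_s\ge X_2$.

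\textbf{The main obstacle} I expect is ruling out $q_s\ge X_2$. One always has $q_s\le p_k$. If $\de=0$ then $X_2=p_{k+2}$ (Remark~\ref{remX2}), so $q_s<X_2$. In general $X_2\ge\dfrac{p_{k+1}+2p_{k+2}}{3}-\dfrac{3\de}{2}$ by \eqref{X2min}, which (using $\de<2m/9$) exceeds $\dfrac{p_{k+1}+2p_{k+2}-m}{3}$; hence $q_s\ge X_2$ would force $m\ge p_{k+1}+2p_{k+2}-3p_k$, which disposes of the small values of $m$. For the remaining range one must show that when $m$ is comparatively large $q_s$ stays well below $p_k$: heuristically, a large $q_s$ makes $\ell(U)=\ell(F)-(p_{k+1}-q_s)$ large, yet the bounds $U\le\dfrac{p_{k+2}}{p_{k+2}-\ell(U)}$ and $F\ge\dfrac{p_{k+1}+\de}{p_{k+1}+\de-m}$, $F=Up_{k+1}/q_s$, together cap $q_s$ from above. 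Making this quantitative — which should use the fine estimate \eqref{D3eq} behind \eqref{X2min} — is the delicate part; granting it, $q_s\le X_1$.

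\emph{Conclusion from $q_s\le X_1$.} Using $X_1=(p_{k+1}+p_{k+2}-m)-X_2$,
\[
\ell(U)\le m-p_{k+1}+q_s\le m-p_{k+1}+X_1=p_{k+2}-X_2,
\]
and $X_2\ge p_{k+1}-3\de/2$ (from \eqref{X2min}) gives $\ell(U)\le p_{k+2}-p_{k+1}+3\de/2$; since by additivity of $\ell$ one has $\ell(U)=\La_{s-1}+Q_s-p_{k+1}$, this is $4.$ Because $Q_s\ge p_{k+1}$, $\La_{s-1}=\ell(U)-(Q_s-p_{k+1})\le\ell(U)\le p_{k+2}-X_2$, and then $0\le\la_i\le\La_j\le\La_{s-1}$ for $1\le i\le j\le s-1$ gives $2.$ and $3.$ Finally $q_s\ge p_{k+1}-m$ was shown above, and $q_s\le X_1=X_1X_2/X_2\le X_1X_2/(p_{k+1}-3\de/2)=\qh$, which with the elementary inequality $\qh\le p_{k+2}-m+3\de/2$ recorded in \eqref{qdef} is $5.$
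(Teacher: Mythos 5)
Your framework, the verification of the hypotheses of Lemma~\ref{eq2}, the proof of Point~1, the reduction of Points~2 and~3 to Point~4 via $\La_{s-1}\le\ell(U)$, and the observation that $X_1\le\qh$ so that $q_s\le X_1$ would give Point~5 -- all of that is correct and, in the case of the reverse deduction $4\Rightarrow 2,3$, a genuinely nice simplification over the paper's order of argument. However the heart of the matter is exactly where you flag it, and what you call ``the delicate part'' is not a finishing touch but the entire content of the lemma: you never rule out $q_s\ge X_2$. Your only tool is the trivial bound $q_s\le p_k$, which, as you compute, only works when $m<p_{k+1}+2p_{k+2}-3p_k$; you propose no mechanism for the remaining, and generic, range of $m$. ``Granting it'' is therefore not a small concession -- without it you have proved none of Points~2--5.

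The reason the direct attack gets stuck is structural, not computational. Peeling off $V=p_{k+1}/q_s$ puts $\ell(V)=p_{k+1}-q_s$ in play, and this quantity can be close to $m$ itself, so in the resulting application of Lemma~\ref{eq2} you cannot know that your numbers $(q_s, p_{k+1}+p_{k+2}-m-q_s)$ are in the order $Y_1<Y_2$ required by \eqref{Y1eq}; you are left with the unresolved two-sided conclusion. The paper sidesteps this by first proving Point~2 with a \emph{different} split $F=F_1F_2$, $F_1=Q_{s-1}/q_{s-1}$, i.e.\ isolating the single factor of second-largest gap $\la_{s-1}$. Crucially $\la_{s-1}<\la_s$ forces $2\la_{s-1}<\ell(F)\le m$, which is precisely the inequality \eqref{sm1} that pins down the ordering $Y_1<Y_2$ and makes Lemma~\ref{eq2}, Point~4, give the one-sided estimate $p_{k+2}-\la_{s-1}\ge X_2$ with no dichotomy. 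Point~3 is then obtained by an induction on $j$ that uses Point~2 as the base case and a gap-size argument (that $\la_{j_0+1}\ge X_2-X_1$ would force $m\le 9\de/2$) to kill the bad branch of the dichotomy that reappears there. Only after Points~1 and~3 are in hand does one have the a~priori estimate $\ell(U)\le p_{k+2}-X_2+\de$, and it is \emph{this}, not $q_s\le p_k$, which eliminates the branch $p_{k+2}-\ell(U)\le X_1$ in the proof of Point~4. In short, Points~2 and~3 are not optional extras that fall out of Point~4, as your plan implies; they are the scaffolding that makes Point~4 provable, and skipping them leaves an unclosed gap.
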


\begin{proof}
\begin{enumerate}
\item
First, we observe that \eqref{Qqs} implies
\begin{equation}\label{Qi}
Q_i\ge p_{k+i}\ge p_{k+1},\qquad 1\le i\le s.
\end{equation}

Lemma \ref{lemme1} and \eqref{lF}
yield  respectively $F\le \frac{Q_s}{Q_s-\ell(F)}$
and $\ell(F)\le m$, so that, 
together with \eqref{Ftau}, we get
\[\frac{p_{k+1}+\de}{p_{k+1}+\de-m}\le F \le
\frac{Q_s}{Q_s-\ell(F)}\le \frac{Q_s}{Q_s-m}\]
which, with the decrease of $t \mapsto \frac{t}{t-m}$,
gives $Q_s\le p_{k+1}+\de$.

\item
 From the definition of $\la_i$ and \eqref{Qqs}, $\la_i$ is
positive and increasing on $i$, and it suffices to show $\la_{s-1}\le
p_{k+2}-X_2$.  We write $F=F_1F_2$ with $F_1=\frac{Q_{s-1}}{q_{s-1}}$
and $F_2=\prod_{i\ne s-1} \frac{Q_i}{q_i}$.  From \eqref{lF} and
\eqref{Qqs}, we have
\begin{equation*}
p_{k+1} > m > m-\la_{s-1}\ge \ell(F)-\la_{s-1}=\la_1+\ldots+\la_{s-2}+
\la_s\ge \la_s> \la_{s-1}
\end{equation*}
which implies
\begin{equation}\label{sm1}
p_{k+2}-\la_{s-1}  > p_{k+1}-\la_{s-1} > p_{k+1}-(m-\la_{s-1}).
\end{equation}
Further, Lemma \ref{lemme1}, 
\eqref{lF} and the increase of $t \mapsto \frac{Q_s}{Q_s-t}$,
\eqref{Qi} and the decrease of $t \mapsto \frac{t}{t-(m-\la_{s-1})}$,
imply
\begin{eqnarray}\label{F2}
F_2 &\le&  \frac{Q_s}{Q_s-\ell(F_2)}=
\frac{Q_s}{Q_s-(\ell(F)-\la_{s-1})}\notag\\
&\le& \frac{Q_s}{Q_s-(m-\la_{s-1})}\le
\frac{p_{k+1}}{p_{k+1}-(m-\la_{s-1})}\cdot
\end{eqnarray}

\ni
If $s\ge 3$ or $Q_1\ge p_{k+2}$, \eqref{Qqs} implies 
$Q_{s-1}\ge p_{k+2}$ which yields
$F_1=\frac{Q_{s-1}}{Q_{s-1}-\la_{s-1}}\le \frac{p_{k+2}}{p_{k+2}-\la_{s-1}}$
so that, from \eqref{Ftau} and \eqref{F2}, we get
\begin{equation}\label{Fmaj}
\frac{p_{k+1}+\de}{p_{k+1}+\de-m}\le F= F_1F_2 \le 
\frac{p_{k+2}}{p_{k+2}-\la_{s-1}}\;
\frac{p_{k+1}}{p_{k+1}-(m-\la_{s-1})}\cdot
\end{equation}
Let us set $Y_2=p_{k+2}-\la_{s-1}$, $Y_1=p_{k+1}-(m-\la_{s-1})$;
from \eqref{sm1}, $Y_2 > Y_1$ holds and, 
in view of \eqref{Fmaj}, we may apply Lemma \ref{eq2}, Point 4. to get
$Y_2=p_{k+2}-\la_{s-1} \ge X_2$ which implies 2..
\medskip

\ni
If $s=2$ and $Q_1=p_{k+1}$, 
$F=\frac{p_{k+1}}{q_1}\frac{Q_2}{q_2} = \frac{p_{k+1}}{q_1}
\frac{Q_2}{Q_2-(Q_2-q_2)}\cdot$
From \eqref{Qi} we have $Q_2\ge p_{k+2}$ and
$F\le \frac{p_{k+1}}{q_1}\frac{p_{k+2}}{p_{k+2}-(Q_2-q_2)}$. 
Here we set $Y_2=q_1$ and 
$Y_1=p_{k+2}-(Q_2-q_2)=q_2-(Q_2-p_{k+2})$; 
by \eqref{Qqs} and \eqref{lF}, we get 
\begin{eqnarray*}
Y_2=q_1>q_2 \ge Y_1 \hspace{-2mm} &=&\hspace{-2mm} 
q_2-(Q_2-p_{k+2})=p_{k+2}-\la_2 \\
&\ge& \hspace{-2mm} p_{k+2}-\sum_{i=1}^2 \la_i=p_{k+2}-\ell(F)
\ge p_{k+2}-m > 0;
\end{eqnarray*}
we may still apply Lemma \ref{eq2} Point 4. to get
$Y_2=q_1=p_{k+1}-\la_1\ge X_2$, which implies 2..

\item
 This time, we write $F=F_1F_2$ with $F_1=\prod_{i=1}^j \frac{Q_i}{q_i}$
and $F_2=\prod_{i=j+1}^s \frac{Q_i}{q_i}$ so that $\ell(F_1)=\La_j$
and $\ell(F_2)=\ell(F)-\La_j\le m-\La_j$. 
For $2\le j\le s-1$,
from \eqref{Ftau}, Lemma \ref{lemme1}, \eqref{Qi},
and \eqref{lF} we get
\begin{eqnarray*}%\label{Fmaj}
\frac{p_{k+1}+\de}{p_{k+1}+\de-m}\le F=F_1F_2 &\le& \frac{Q_j}{Q_j-\ell(F_1)}\;
\frac{Q_s}{Q_s-\ell(F_2)}\\
&\le& \frac{p_{k+2}}{p_{k+2}-\La_j}\;
\frac{p_{k+1}}{p_{k+1}-(m-\La_{j})}\cdot
\end{eqnarray*}
Therefore, we apply Lemma \ref{eq2} Point 4., but we do not know whether
$p_{k+2}-\La_j$ is greater than $p_{k+1}-(m-\La_{j})$, so that, either
\begin{equation}\label{cas1}
p_{k+2}-\La_j\ge X_2
\end{equation}
or
\begin{equation}\label{cas2}
p_{k+2}-\La_j\le X_1.
\end{equation}
For $j=1$, as $\La_1=\la_1$, \eqref{cas1} holds, from 2.. Since $\La_j$ is 
increasing on $j$, if \eqref{cas1} holds for some $j=j_0$, it also holds for 
$j\le j_0$. If \eqref{cas1} holds for $j=s-1$, 3. is proved; so, let 
us assume that the greatest value $j_0$ for which \eqref{cas1} holds  
satisfies  $1\le j_0 < s-1$; we should have
\begin{equation}\label{casj0}
p_{k+2}-\La_{j_0}\ge X_2\qquad \text{ and } \qquad p_{k+2}-\La_{j_0+1}\le X_1.
\end{equation}
From 2., \eqref{casj0} and because $X_1,X_2$ are solutions of
\eqref{Eeq}, we should get
\[p_{k+2}-X_2\ge \la_{j_0+1}=\La_{j_0+1}-\La_{j_0}\ge X_2-X_1=
2X_2+m-p_{k+1}-p_{k+2}\]
which, would imply $m\le 2p_{k+2}+p_{k+1}-3X_2$ and, through the
second inequality of \eqref{X2min},
$m \le \frac{9\de}{2}$, in   
contradiction with \eqref{delta}. Therefore, $j_0\ge s-1$ and 3. is proved.

\item
 If $s=1$ we have to show $\ell(U)=Q_1-p_{k+1}\le p_{k+2}-X_2$ 
which is true since, from 1., $Q_1-p_{k+1}\le \de$ and from \eqref{X2min}, 
with $T_2 = p_{k+2}$,  $\de \le p_{k+2}-X_2$.

So, we assume $s\ge 2$. If $Q_1=p_{k+1}$, $U$ simplifies itself;
and, in all cases, from \eqref{Qqs}, the prime factors of the 
numerator of $U$ are 
at least $p_{k+2}$ and those of the denominator are at most $p_{k+1}$. 
So, we may apply Lemma \ref{lemme1} which, with \eqref{Qi} and  
the decrease of $t \mapsto t/(t-\ell(U)$, yields
\begin{equation}\label{U1}
U\le \frac{Q_s}{Q_s-\ell(U)}\le \frac{p_{k+2}}{p_{k+2}-\ell(U)},\qquad
V=\frac{p_{k+1}}{p_{k+1}-\ell(V)}\cdot
\end{equation}
It follows from \eqref{lF} that $\ell(U)+\ell(V)=\ell(F) \le m$ and, 
from \eqref{Ftau}, we get
\begin{equation*}
\frac{p_{k+1}+\de}{p_{k+1}+\de-m}\le F=UV
\le \frac{p_{k+2}p_{k+1}}{(p_{k+2}-\ell(U))(p_{k+1}-(m-\ell(U)))}\cdot
\end{equation*}
Applying Lemma \ref{eq2} Point 4. with 
$(Y_1,Y_2) = (p_{k+2}-\ell(U),p_{k+1}-(m-\ell(U)))$
yields 
\begin{equation}\label{cas3}
p_{k+2}-\ell(U)\ge X_2 \qquad \text{ or } \qquad p_{k+2}-\ell(U)\le X_1.
\end{equation}
But, from 1. and 3., we have $\ell(U)=\La_{s-1}+Q_s-p_{k+1}
\le p_{k+2}-X_2+\de$ which, together with $(X_1,X_2)$ solutions
of \eqref{Eeq}, the second inequality in \eqref{X2min} 
and \eqref{delta}, give
\begin{eqnarray*}
X_1+\ell(U)-p_{k+2} &\le& X_1-X_2+\de=\de+p_{k+1}+p_{k+2}-m-2X_2\\
&\le& \de+p_{k+1}+p_{k+2}-m-\frac 23(p_{k+1}+2p_{k+2})+3\de\\ 
&=& 4\de+\frac{p_{k+1}-p_{k+2}}{3}-m < 0.
\end{eqnarray*}
Therefore, $p_{k+2}-\ell(U)\le X_1$ does not hold, 
and, from \eqref{cas3}, we have $p_{k+2}-\ell(U)\ge X_2$ which shows
the first inequality in 4.. The second inequality comes from \eqref{X2min}. 

\item From \eqref{Qqs} and \eqref{lF}, we have 
$\ell(V)=p_{k+1}-q_s\le Q_s-q_s\le \ell(F) \le m$ which 
proves the lower bound of 5.. 

If $s=1$ and $Q_1=p_{k+1}$, $U=1$ and $F=V$ so that, from \eqref{Ftau},
\[q_s=\frac{p_{k+1}}{F}\le \frac{p_{k+1}(p_{k+1}-m+\de)}{p_{k+1}+\de}
\le \qh =\frac{p_{k+1}p_{k+2}(p_{k+1}-m+\de)}
{(p_{k+1}+\de)(p_{k+1}-3\de/2)}\cdot
\]

If $s\ge 2$ or $Q_1\ge p_{k+2}$, \eqref{U1} holds and gives with \eqref{Ftau}
and 4.
\[q_s=\frac{p_{k+1}}{V}=\frac{p_{k+1}U}{F}
\le \frac{p_{k+1}p_{k+2}(p_{k+1}-m+\de)}{(p_{k+1}+\de)(p_{k+2}-\ell(U))}
\le \qh.\]
\end{enumerate}
\end{proof}

%--------------------------------------------------------------------------
\subsubsection*{Proof of Proposition \ref{Gnew}}
%--------------------------------------------------------------------------

Let us assume $\de > 0$. \eqref{di} and \eqref{Gpkm1} imply
\begin{equation}\label{del12}
\de\ge p_{k+2}-p_{k+1}.
\end{equation}
First, we prove the upper bound \eqref{qdef}.
We have to show that the quantity below is positive:
\[
(p_{k+2}-m+\de)(p_{k+1}+\de)\Big(p_{k+1}-\frac{3\de}{2}\Big)
- p_{k+1}p_{k+2}(p_{k+1}-m+\de).
\]
But this quantity is equal to
\begin{multline*}
(p_{k+2}-p_{k+1})\Big((p_{k+1}-\de)(m-\frac{3\de}{2})+\de(m-3\de)\Big)\\
+ p_{k+1}\frac{\de}{2}\Big(m-\frac{9\de}{2}\Big)
+\frac{3\de^2}{4}\Big(m-\frac{3\de}{2}\Big)
\end{multline*}
which is clearly positive since, from \eqref{dm}, $p_{k+1} > m >
\frac{9\de}{2}$ holds and \eqref{qdef} is proved.
\medskip

Let $q$ be a prime satisfying $p_{k+1}-m\le q\le \qh$. In view of proving
\eqref{Gq^}, let us show that
\begin{equation}\label{p4min}
\frac{p_{k+1}}{q}G(p_{k+1},m-p_{k+1}+q)\le G(p_k,m)
\end{equation}
holds. Let $q'$ be any prime dividing the denominator of 
$G(p_{k+1},m-p_{k+1}+q)$; we should have $p_{k+2}-q'\le m-p_{k+1}+q$
i.e., $q'\ge p_{k+1}+p_{k+2}-m-q$ 
which yields from \eqref{qdef}, \eqref{del12} and \eqref{dm}
\begin{eqnarray*}
q'-q &\ge& p_{k+1}+p_{k+2}-m-2q\ge p_{k+1}+p_{k+2}-m-2\qh\\
     &\ge&p_{k+1}+p_{k+2}-m-2\left(p_{k+2}-m+\frac{3\de}{2}\right)
       =p_{k+1}-p_{k+2}+m-3\de\\
     &\ge& p_{k+1}-(\de+p_{k+1})+m-3\de=m-4\de > 0.  
\end{eqnarray*}
Therefore, $q'\ne q$, and after a possible simplification by $p_{k+1}$,
$\frac{p_{k+1}}{q}G(p_{k+1},m-p_{k+1}+q)\in \cg(p_k,m)$ (defined 
in \eqref{cG}), which, from \eqref{G}, implies \eqref{p4min}.

From \eqref{del12} and \eqref{dm}, we have $0 < 2\de < m$, and the prime
$p=p_{k+1}+\de-m$ satisfies $p < p_{k+2}-\de$, and thus
is smaller than any prime factor of the denominator of $G(p_{k+1},\de)$.
Therefore, after possibly simplifying by $p_{k+1}$, the fraction
$\Phi=\frac{p_{k+1}}{p} G(p_{k+1},\de)$ belongs to $\cg(p_k,m)$ 
and we have from \eqref{G} and \eqref{di}
\[G(p_k,m)\ge \Phi\ge \frac{p_{k+1}}{p_{k+1}+\de-m}
\left(1+\frac{\de}{p_{k+1}}\right)=\frac{p_{k+1}+\de}{p_{k+1}+\de-m}\cdot\]
So, hypotheses \eqref{delta} and \eqref{Ftau} being fullfilled,
we may apply Lemma \ref{Gtau}, (v) which, under the notation \eqref{UV}, 
asserts that
\begin{equation}\label{GUV}
G(p_k,m)=UV=U\frac{p_{k+1}}{q_s}
\end{equation}
with $q_s\in [\,p_{k+1}-m,\qh\,]$ and $\ell(U)+\ell(V)=\ell(G(p_k,m))$
which, from \eqref{lGm},  implies $\ell(U)\le m-\ell(V)=m-p_{k+1}+q_s$.
After a possible simplification by $p_{k+1}$, $U$ belongs to 
$\cg(p_{k+1},\ell(U)) \subset \cg(p_{k+1},m-p_{k+1}+q_s)$.
So, from \eqref{G}, $U\le G(p_{k+1},m-p_{k+1}+q_s)$, and \eqref{GUV} gives
\[G(p_k,m)\le \frac{p_{k+1}}{q_s}G(p_{k+1},m-p_{k+1}+q_s)\]  
which, with \eqref{p4min}, completes the proof of \eqref{Gq^} and of 
Proposition \ref{Gnew}.   \hfill$\Box$

%--------------------------------------------------------------------------
\section{Some results}\label{perfnewalgo}
%--------------------------------------------------------------------------

With the {\sc maple} program available on the web-site of J.-L. Nicolas,
the factorization of $g(n)$ has been computed 
for some values of $n$. The results for $n=10^6,10^{9},
10^{12},10^{15}$ are displayed in {\sc Fig}. 6.
For primes $q_1 < q_2$ let us denote by $[q_1\!-\!q_2]$ the product 
$\prod_{q_1 \le p \le q_2} p$.
The bold factors in the values of $g(n)$ are the factors
of the plain prefix $\pi$ of $g(n)$, defined in \eqref{defsuf}.
\begin{figure}[t]
\[
\begin{array}{|l|}
\hline\\%%[1ex]
n=10^6,\hfill N=2^9 3^6 5^4 7^3 [11\!-\!41]^2 [43\!-\!3923]
\\[2ex]
\ell(N)=998093,\hfill
g(10^6)=g(10^6-1)=\dfrac{\mathbf{43} \cdot3947}{3847}N.
\\[2.5ex]
\hline\\[2ex]
n=10^9,\hfill N=2^{14}3^9 5^6 7^5 11^4 13^4 
[17\!-\!31]^3 [37\!-\!263]^2 [269\!-\!150989]
\\[2ex]
\ell(N)=999969437,\hfill
g(10^9)=g(10^9-1)=\dfrac{\mathbf{37}\cdot 150991}{\mathbf{2\cdot 3} \cdot
  148399}N.
\\[2.5ex]
\hline\\[2ex]
n=10^{12},%%\\[3ex]
N = 2^{18}3^{12} 5^8 7^6 11^5 13^5 [17\!-\!31]^4  [37\!-\!113]^3
[127\!-\!1613]^2 [1619\!-\!5476469]
\\[2ex]
\ell(N)=999997526071,\hfill
g(10^{12})=\dfrac{\mathbf{1621\cdot 1627\cdot 1637}\cdot 5476483}
{5475739\cdot 5476469}N.
\\[2.5ex]
\hline\\[2ex]
n=10^{15},\qquad N = 2^{23}3^{15} 5^{10} 7^8 11^7 13^6 17^6 
[19\!-\!31]^5  [37\!-\!79]^4 [83\!-\!389]^3\\[1ex]
\hfill
\times [397\!-\!9623]^2 [9629\!-\!192678817],
\\[2ex]
\ell(N)=999999940824564,
\\[2ex]
\hfill
g(10^{15})=g(10^{15}-1)=
\dfrac{192678823 \cdot 192678853 \cdot 192678883 \cdot 192678917}
{\mathbf{389\cdot 9539\cdot 9587\cdot 9601\cdot 9619\cdot 9623}\cdot 192665881} 
N.
\\[2ex]
\hline
\end{array}
\]
\caption{The values $g(n)$ for $n = 10^{6}, 10^{9}, 10^{12}, 10^{15}$.}
\end{figure}

On a 3GHz Pentium 4, the time of computation of $g(n)$ is 
about 0.02 second for an
integer $n$ of 6 decimal digits and 10 seconds for 15 digits.
\nopagebreak[4]
%--------------------------------------------------------------------------
\section{Open problems}\label{openpb}
%--------------------------------------------------------------------------
\subsection{An effective bound for the benefit}\label{openben}
%--------------------------------------------------------------------------
Let us define $\ben g(n)$ by \eqref{ben} with $N$ and $\rho$ defined by
\eqref{NN'} and \eqref{Nxj}. Is it possible to get 
an effective form of \eqref{benmaxg}, i.e.,
\[\ben g(n)+n-\ell(g(n)) \le C\rho\]
for some absolute constant $C$ to determine?

A hint is to apply Proposition \ref{propmajben} with $M=\frac{P_1P_x
2\ldots P_r}
{q_1q_2\ldots q_{2r}}$ for some $r$, where the $P_i$'s are the $r$ 
smallest primes not dividing $N$ and the $q_i$'s are the $2r$ largest primes
such that $v_{q_i}(N) =2$, and, further, to apply effective results
on the Prime Number Theorem like those of \cite{RS} or \cite{Dus}.

%--------------------------------------------------------------------------
\subsection{Increasing subsequences of $g(n)$}
%--------------------------------------------------------------------------

An increasing subsequence of $g$ is a set of $k$ consecutive 
integers $\{n,n+1,\ldots,n+k-1\}$ such that
\begin{equation}\label{incr}
g(n-1)=g(n) < g(n+1) < \ldots < g(n+k-1) = g(n+k).
\end{equation}
Due to a parity phenomenom, these maximal sequences are rare. For $n\le 10^6$,
there are only $9$ values on $n$ with $k\ge 7$. The record is $n=35464$ 
with $k=20$.

Are there arbitrarily long maximal sequences? It seems to be a very 
difficult question.
In \cite{N60}, (1.7), it is conjectured that there are infinitely many maximal 
sequences with $k\ge 2$.

%----------------------------------------------------------------------------
\subsection{The second minimum}
%----------------------------------------------------------------------------

Let us write $g_1(n)=g(n) > g_2(n) > \ldots > g_I(n)=1$ all the integers 
such that, if $\si\in {\mathfrak S}_n$, the order of $\si$ is equal to 
$g_i(n)$ for some $i\in \{1,2,\ldots,I\}$. From \eqref{ErTu}, $I$ is equal 
to the number of positive integers $M$ satisfying $\ell(M)\le n$.

We might be interested in the computation of $g_2(n)$ or more generally, in 
the computation of $g_i(n)$ for $1\le i \le i_0$ where $i_0$ is some (small)
fixed constant.

The basic algorithm (see Section \ref{basic}) can be easily adapted for 
this purpose. It seems reasonnable to think that our algorithm, as sketched in
\ref{stepalgo}, can also be extended to get $g_i(n)$.

%----------------------------------------------------------------------------
\subsection{Computing $h(n)$}
%----------------------------------------------------------------------------

Let $h(n)$ be the maximal product of primes $p_{i_1},p_{i_2},\ldots,p_{i_r}$
under the condition $p_{i_1}+p_{i_2}+\ldots +p_{i_r}\le n$ ($r$ is not fixed);
$h(n)$ can be interpreted as the maximal order of a permutation of the 
symmetric group ${\mathfrak S}_n$ such that the lengths of its cycles are all 
primes.

A formula similar to \eqref{g} can be written:
\[h(n)=\max_{\substack{M\;\text{ squarefree }\\ \ell(M)\le n}} M.\] 
The superchampion numbers are the product of the first primes.

A related problem is to find an algorithm to compute $h(n)$ for $n$ up
to $10^{15}$.

%--------------------------------------------------------------------------
\subsection{Maximum order in $GL(n,\Z)$}
%--------------------------------------------------------------------------

Let $G(n)$ be the maximum order of torsion elements in $GL(n,\Z)$. It 
has been shown in \cite{Lev} that
\begin{equation}\label{GLev}
G(n)=\max_{L(M)\le n}\; M
\end{equation}
where $L$ is the additive function defined by $L(1)=L(2)=0$ and 
$L(p^\al)=\vfi(p^\al)=p^\al-p^{\al-1}$ if $p^\al\ge 3$.

From \eqref{GLev} and \eqref{g}, it follows that $g(n)\le G(n)$ holds for 
all $n$'s and it has been shown in \cite{N73} that 
$\lim_{n\to\iy} G(n)/g(n)=\iy$.

Is it possible to adapt the algorithm described in this paper to compute $G(n)$
up to $10^{15}$?

\def\refname{References}

\bigskip

\begin{minipage}[t]{6.5cm}
Marc Del\'eglise, Jean-Louis Nicolas,\\
Universit\'e de Lyon, \\
Universit\'e de Lyon 1, CNRS,\\
Institut Camille Jordan,\\
B\^at. Doyen Jean Braconnier,\\
21 Avenue Claude Bernard,\\
F-69622 Villeurbanne cedex, France.\\
\end{minipage}
\begin{minipage}[t]{5cm}
Paul Zimmermann,\\
Centre de Recherche INRIA \hfill\\
Nancy Grand Est \hfill\\
Projet CACAO\,-b\^atiment A\\
615 rue du Jardin Botanique,\\
F-54602 Villers-l\`es-Nancy cedex,\\
France.
\end{minipage}
\bigskip

\noindent
e-mails and web-sites:  
\medskip

\noindent
\hspace{-1.em}
\begin{tabular}{ll}
\path{deleglis@math.univ-lyon1.fr} 
& \path{http://math.univ-lyon1.fr/~deleglis}\\
\path{jlnicola@in2p3.fr} 
& \path{http://math.univ-lyon1.fr/~nicolas/}\\
\path{Paul.Zimmermann@loria.fr}
& \path{http://www.loria.fr/~zimmerma/}      
\end{tabular}

\end{document}